\newtheorem{theorem}{Theorem}[section]
\newtheorem{proposition}[theorem]{Proposition}
\newtheorem{remark}[theorem]{Remark}
\newtheorem{lemma}[theorem]{Lemma}
\newtheorem{definition}[theorem]{Definition}
\makeatother \pagestyle{myheadings} \markboth{\hfill
\newcommand{\Hhat}[1]{\stackrel{\begin{picture}(60,5)\put(0,0){\line(6,1){30}}\put(30,5){\line(6,-1){30}}\end{picture}}{#1}}
\newcommand{\hd}[1]{{\hat{d}}^}
\begin{document}
\title{Hypercyclicity, existence and approximation results for convolution operators on spaces of entire functions}

\author{Vin\'icius V. F\'avaro\thanks{The first named author is supported by FAPESP Grant 2014/50536-7; FAPEMIG Grant PPM-00086-14; and CNPq Grants 482515/2013-9, 307517/2014-4.} and Ariosvaldo M. Jatob\'a}
\date{}
\maketitle
\vspace*{-1.0em}
\begin{abstract}
\noindent In this work we shall prove new results on the theory of  convolution operators on spaces of entire functions. The focus is on
hypercyclicity results for convolution operators on spaces of entire functions of a given type and order; and existence and approximation results for convolution equations on spaces of entire functions of a given type and order.
In both cases we give a general method to prove new results that recover, as particular cases, several results of the literature. Applications of these more general results are given, including new hypercyclicity results for convolution operators on spaces on entire functions on $\mathbb{C}^n.$
\end{abstract}

\bigskip

\noindent\textbf{Mathematics Subject Classifications (2010):} 47A16, 46G20, 46E10, 46E50.
\newline
\textbf{Key words:} Convolution operators, holomorphic functions, hypercyclicity, existence and approximation results.

\section{Introduction}
\qquad For a topological vector space $X$, a continuous linear operator $T \colon X \longrightarrow X$ is {\it hypercyclic} if the \emph{orbit of} $x$, given by
$\{x, T(x), T^2(x), \ldots\}$ is dense in $X$ for some $x \in X$.
In this case, $x$ is said to be a \emph{hypercyclic
vector for $T$}. Hypercyclic
translation and differentiation operators on spaces of entire functions of one
complex variable were first investigated by Birkhoff \cite{birkhoff} and
MacLane \cite{maclane}, respectively. Godefroy and Shapiro \cite{godefroy} pushed these
results quite further by proving that every nontrivial convolution operator on spaces of
entire functions of several complex variables is hypercyclic. By a \emph{nontrivial convolution operator} we mean a convolution operator which is not a scalar multiple of the identity. For the theory of hypercyclic operators and
its ramifications we refer to \cite{bay, goswinBAMS, grosse_peris}. We remark that several results on the hypercyclicity of operators on spaces of entire functions in infinitely many complex variables appeared later (see,
e.g., \cite{aron, bay2, favaro3, bes2012, CDSjmaa, chan, FM, GS,
goswinBAMS, MPS, peterssonjmaa}).
In 2007, Carando, Dimant and Muro \cite{CDSjmaa} proved some general results
that encompass as particular cases several of the above mentioned results. In
\cite{favaro3}, using the theory of holomorphy types, Bertoloto, Botelho,
F\'{a}varo and Jatob\'{a} generalized strictly the results of \cite{CDSjmaa} to a more
general setting. For instance, \cite[Theorem 2.7]{favaro3} recovers, as a very particular case,  the famous result of Godefroy
and Shapiro \cite{godefroy} on the hypercyclicity of convolution operators on
$\mathcal{H}(\mathbb{C}^{n})$.
%
%

The techniques of \cite{favaro3} are a refinement of a general method introduced in \cite{favaro1} to prove existence and approximation results for convolution
equations defined on the space $\mathcal{H}_{\Theta b}(E)$ of all entire functions of $\Theta$-bounded type defined on a complex Banach space $E$. 

The investigation of existence and approximation
results for convolution equations was initiated by
Malgrange \cite{malgrange} and developed by several authors (see, for instance
\cite{Col-Mat, cgp, cp, DW, DW2, favaro, FaBelg, favaro1, favaro2, gupta, G,
martineau, Matos-F, Matos-Z, Matos-Z2, Matos-livro, MN, Nach-B}). 

In this work we give contributions in two directions. In the first, we explore hypercyclicity results for convolution operators on the space $Exp_{\Theta,0,A}^{k}\left(  E\right) $, introduced in \cite{favaro2}, of $\Theta$ entire functions of a given type $A$ and order $k$ on a complex Banach space $E$, where $\Theta$ is a given holomorphy type. These results generalize the hypercyclicity results obtained in \cite{favaro3, birkhoff, CDSjmaa, godefroy, maclane}.

In the second direction, we obtain a general method to prove existence and approximation results for convolution equations on $Exp_{\Theta,0,A}^{k}\left(  E\right) $. These results generalize results of the same type obtained in \cite{favaro3, FaBelg, gupta, G, malgrange, martineau, matos3}. 

Both in the hypercyclicity results and in the existence and approximation results, the duality result via Fourier-Borel transform proved in \cite{favaro2} plays a central role. In the fashion
of Dineen \cite{dineen}, we identify the properties a holomorphy type must enjoy for the results to hold true (more precisely,  the $\pi_1$-$\pi_2$-holomorphy types introduced in \cite{favaro1} and refined in \cite{favaro3}, and the notion of $\pi_{2,k}$-holomorphy type introduced in Definition \ref{pi2k}). Moreover, our proofs of the hypercyclicity results rest on a classical hypercyclicity criterion, first obtained by Kitai \cite{kitai} and later on rediscovered by Gethner and Shapiro \cite{GS}. Nowadays it is known, by a result of Costakis and Sambarino \cite{costakis}, that the classical hypercyclicity criterion of Kitai ensures that the operator is mixing, a property stronger than hypercyclicity.  We recall that if $X$ is a topological vector space, then a continuous linear operator $T\colon X\rightarrow X$ is said to be \emph{mixing} if for any two non-empty open sets $U,V\subset X,$ there is $n_0\in \mathbb{N}$ such that $T^n(U)\cap V\neq\emptyset$ for all $n\geq n_0.$ 

This paper is organized as follows. In Section 2 we collect some general results which are often used in subsequent sections. In Section 3 we prove some preparatory results about convolution operators. In Section 4 we prove hypercyclicity results for convolution operators. Section 5 is devoted to the study of existence and approximation results for convolution equations. Finally, in Section 6 we provide new examples and show that well known examples are recovered by our results.

Throughout this paper $\mathbb{N}$ denotes the set of positive integers and
$\mathbb{N}_{0}$ denotes the set $\mathbb{N}\cup\{0\}$. By $\Delta$ we mean the open unit disk in the complex field $\mathbb{C}$. As usual, for
$k\in\left(  1,+\infty\right)  ,$ we denote by $k^{\prime}$ its conjugate,
that is, $\frac{1}{k}+\frac{1}{k^{\prime}}=1.$ For $k=1,$ we set $k^{\prime
}=+\infty.$ $E$ and $F$ are always complex Banach spaces and $E^{\prime}$ denotes the topological dual of $E$. The Banach space of all continuous $m$-homogeneous
polynomials from $E$ into $F$ endowed with its usual sup norm is denoted by $\mathcal{P}(^{m}E;F)$. The subspace of $\mathcal{P}(^{m}E;F)$ of all polynomials of finite type is represented by $\mathcal{P}_f(^{m}E;F)$. $\mathcal{H}(E;F)$ denotes the vector space of all holomorphic mappings from $E$ into $F$.
In all these cases, when $F = \mathbb{C}$ we write $\mathcal{P}(^{m}E)$, $\mathcal{P}_f(^{m}E)$  and $\mathcal{H}(E)$
instead of $\mathcal{P}(^{m}E;\mathbb{C})$, $\mathcal{P}_f(^{m}E;\mathbb{C})$  and $\mathcal{H}(E;\mathbb{C})$, respectively. For the general theory of homogeneous polynomials and holomorphic functions or any unexplained notation we refer to Dineen \cite{dineenlivro}, Mujica \cite{mujica} and Nachbin \cite{nachbin2}.

\section{Preliminaires}

\qquad We start recalling several concepts and results involving holomorphy on infinite dimensional spaces. 

\begin{definition}\label{def_holomorfia}
\rm Let $U$ be an open subset of $E$. A mapping $f\colon U\longrightarrow
F$ is said to be \textit{holomorphic on $U$} if for every $a\in U$ there
exists a sequence $(P_{m})_{m=0}^{\infty}$, where each $P_{m}\in
\mathcal{P}(^{m}E;F)$ ($\mathcal{P}(^{0}E;F)=F$), such that $f(x)=\sum\limits
_{m=0}^{\infty}P_{m}(x-a)$ uniformly on some open ball with center $a$. The
$m$-homogeneous polynomial $m!P_{m}$ is called the \textit{$m$-th derivative
of $f$ at $a$} and is denoted by $\hat{d}^{m}f(a)$. In particular, if
$P\in\mathcal{P}(^{m}E;F)$, $a\in E$ and $k\in\{0,1,\ldots,m\}$, then
\[
\hat{d}^{k}P(a)(x)=\frac{m!}{(m-k)!}\check{P}(\underbrace{x,\ldots
,x}_{k\,{times}},a,\ldots,a)
\]
for every $x\in E$, where $\check{P}$ is the unique symmetric $m$-linear
mapping associated to $P$.
\end{definition}

\begin{definition}
\rm\label{holomorphy type} (Nachbin \cite{nachbin2}) A \emph{holomorphy type $\Theta$ from $E$ to $F$} is a sequence of Banach
spaces $(P_{\Theta}(^{j} E;F))_{j=0}^{\infty}$, the norm on each of them being
denoted by $\|\cdot\|_{\Theta}$, such that the following conditions hold true:

\begin{enumerate}
\item[$(1)$] \textrm{Each $P_{\Theta}(^{j} E;F)$ is a vector subspace of $P(^{j}
E; F)$ and $P_{\Theta}(^{0} E;F)$ coincides with $F$ as
a normed vector space;}

\item[$(2)$] \textrm{There is a real number $\sigma\geq1$ for which the
following is true: given any $k\in\mathbb{N}_{0}$, $j\in\mathbb{N}_{0}$,
$k\leq j $, $a\in E$, and $P\in\mathcal{P}_{\Theta}(^{j}E;F)$, we have
$\hat{d}^{k}P(a)\in\mathcal{P}_{\Theta}(^{k}E;F)$ and
\[
\left\|  \frac{1}{k!}\hat{d}^{k}P(a)\right\|  _{\Theta}\leq\sigma
^{j}\|P\|_{\Theta}\|a\|^{j-k}.
\]
}
\end{enumerate}
A holomorphy type from $E$ to $F$ shall be denoted by either $\Theta$ or $( \mathcal{P}_{\Theta}(^{j}E;F))_{j=0}^\infty$. When $F=\mathbb{C}$ we write $P_\Theta(^{j}E)$ instead of $P_{\Theta}(^{j} E;\mathbb{C})$, for every $j\in\mathbb{N}_0.$
\end{definition}
\noindent It is obvious that each inclusion $\mathcal{P}_{\Theta}(^{j}E;F)\subset
\mathcal{P}(^{j}E;F)$ is continuous and $\|P\|\leq\sigma^{j}\|P\|_{\Theta}$.

\begin{definition}\rm(\cite[Definition 2.2]{favaro2})
\textrm{\label{bk} Let $(\mathcal{P}_{\Theta}(^{j}E))_{j=0}^{\infty}$ be a
holomorphy type from $E$ to $\mathbb{C}$. For $\ \rho>0$ and $k\geq1,$ we
denote by $\mathcal{B}_{\Theta,\rho}^{k}\left(  E\right)  $ the complex Banach  space of all $f\in\mathcal{H}\left(  E\right)  $ such that $\widehat{d}%
^{j}f\left(  0\right)  \in\mathcal{P}_{\Theta}\left(  ^{j}E\right)  ,$ for all
$j\in\mathbb{N}_{0}$\ and%
\[
\left\Vert f\right\Vert _{\Theta,k,\rho}={\displaystyle\sum\limits_{j=0}%
^{\infty}} \rho^{-j}\left(  \frac{j}{ke}\right)  ^{\frac{j}{k}}\left\Vert
\frac{1}{j!}\widehat{d}^{j}f\left(  0\right)  \right\Vert _{\Theta}<+\infty,
\]
with the norm given by $\left\Vert \cdot\right\Vert _{\Theta,k,\rho}.$ }
\end{definition}


Now we recall the definition of the spaces of entire functions of a given type $A$ and finite order $k$.

\begin{definition}\rm(\cite[Definition 2.4]{favaro2})
\label{expk}Let $\left(  \mathcal{P}_{\Theta}(^{j}E)\right)
_{j=0}^{\infty}$ be a holomorphy type from $E$ to $\mathbb{C}$,
$A\in\left[  0,+\infty\right)  $ and $k\geq1.$ We denote by $Exp_{\Theta,0
,A}^{k}\left(  E\right)  $ the complex vector space ${\displaystyle\bigcap
\limits_{\rho>A}} \mathcal{B}_{\Theta,\rho}^{k}\left(  E\right)  $ with the locally convex projective limit topology. In case $A=0$ we denote $Exp_{\Theta,0}^{k}\left(
E\right)  :=Exp_{\Theta ,0,0}^{k}\left(  E\right)  =
{\displaystyle\bigcap\limits_{\rho>0}}
\mathcal{B}_{\Theta,\rho}^{k}\left(  E\right)  $. By \cite[Proposition 2.7]{favaro2} $Exp_{\Theta,0
,A}^{k}\left(  E\right)  $  is a Fr\'echet space.
\end{definition}

%


\begin{proposition}(\cite[Proposition 2.5]{favaro2})
\label{caracterizacao sm(r,q)}Let $\left(  \mathcal{P}_{\Theta}(^{j}E)\right)
_{j=0}^{\infty}$ be a holomorphy type from $E$ to $\mathbb{C}$ and
$k\in[1,+\infty)$. If $f\in$ $\mathcal{H}\left(  E\right)  $ is such that
$\widehat{d}^{j}f\left(  0\right)  \in\mathcal{P}_{\Theta}\left(
^{j}E\right)  ,$ $\forall j\in\mathbb{N}_{0}$, then for each $A\in\left[  0,+\infty\right), f\in Exp_{\Theta,0,A}^{k}\left(  E\right)  $ if, and only if, 
$$\limsup
\limits_{j\rightarrow\infty}\left(  \frac{j}{ke}\right)  ^{\frac{1}{k}%
}\left\Vert \frac{1}{j!}\widehat{d}^{j}f\left(  0\right)  \right\Vert
_{\Theta}^{\frac{1}{j}}\leq A.$$
\end{proposition}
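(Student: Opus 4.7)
The plan is to reduce the statement to the Cauchy--Hadamard root test applied to the defining series of $\mathcal{B}_{\Theta,\rho}^{k}(E)$. Set $\alpha_{j}:=\bigl\|\tfrac{1}{j!}\widehat{d}^{j}f(0)\bigr\|_{\Theta}$ and $\beta_{j}:=\rho^{-j}(j/(ke))^{j/k}\alpha_{j}$. Since $f\in Exp_{\Theta,0,A}^{k}(E)$ iff $\sum_{j}\beta_{j}<+\infty$ for every $\rho>A$, the first step is to compute
\[
\limsup_{j\to\infty}\beta_{j}^{1/j}=\rho^{-1}\limsup_{j\to\infty}\left(\frac{j}{ke}\right)^{1/k}\alpha_{j}^{1/j},
\]
which reduces the analytic content of the proposition to a $\limsup$ comparison with $\rho$.

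For the $(\Leftarrow)$ direction, I would assume $L:=\limsup_{j}(j/(ke))^{1/k}\alpha_{j}^{1/j}\leq A$ and fix any $\rho>A$. Then $\limsup_{j}\beta_{j}^{1/j}=L/\rho\leq A/\rho<1$, so the root test gives absolute convergence of $\sum_{j}\beta_{j}$; hence $f\in\mathcal{B}_{\Theta,\rho}^{k}(E)$ for each $\rho>A$, which is precisely membership in $Exp_{\Theta,0,A}^{k}(E)$ by Definition~\ref{expk}.

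For the $(\Rightarrow)$ direction I would argue by contradiction. Suppose $f\in Exp_{\Theta,0,A}^{k}(E)$ but $L>A$. Pick $\rho$ with $A<\rho<L$; then $\limsup_{j}\beta_{j}^{1/j}=L/\rho>1$, so there is a subsequence $(j_{n})$ along which $\beta_{j_{n}}\geq 1$. In particular $\beta_{j}\not\to0$, so the series $\sum_{j}\beta_{j}=\|f\|_{\Theta,k,\rho}$ diverges, contradicting $f\in\mathcal{B}_{\Theta,\rho}^{k}(E)$. Therefore $L\leq A$.

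The only subtle point is that in the $(\Leftarrow)$ direction the root test needs strict inequality $L/\rho<1$, which is guaranteed because we take $\rho>A\geq L$ (if $A=0$ and $L=0$ we still have $0<1$); and in the $(\Rightarrow)$ direction we exploit the freedom to pick $\rho$ strictly between $A$ and the purported $L>A$, available precisely because $Exp_{\Theta,0,A}^{k}(E)$ is an intersection over all $\rho>A$. No other obstacle is expected, as everything reduces to the Cauchy--Hadamard criterion applied to the explicit series defining $\|f\|_{\Theta,k,\rho}$.
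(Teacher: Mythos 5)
Your argument is correct, and it is the natural one: the proposition is exactly a Cauchy--Hadamard root-test characterization of when the series defining $\left\Vert f\right\Vert _{\Theta,k,\rho}$ converges for every $\rho>A$, and both directions (strict inequality $L/\rho<1$ for convergence when $\rho>A\geq L$, and the choice of $\rho$ strictly between $A$ and $L$ to force a non-null subsequence when $L>A$) are handled properly. The paper itself does not reproduce a proof --- it only cites \cite[Proposition 2.5]{favaro2} --- but the argument there is this same root-test computation, so there is nothing further to compare.
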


Now we recall the definition of the spaces of holomorphic functions of a given type $A$ and infinite order.

\begin{definition}\rm(\cite[Definition 2.8]{favaro2})
\rm{\label{expInfinito}Let $\left(  \mathcal{P}_{\Theta}(^{j}E)\right)
_{j=0}^{\infty}$ be a holomorphy type from $E$ to $\mathbb{C}$. If
$A\in\left[  0,+\infty\right)  ,$ we denote by $\mathcal{H}_{\Theta b}\left(
B_{\frac{1} {A}}\left(  0\right)  \right)  $ the Fr\'echet space of all
$f\in\mathcal{H}\left(  B_{\frac{1}{A}}\left(  0\right)  \right)  $ such that
$\widehat{d}^{j}f\left(  0\right)  \in\mathcal{P}_{\Theta}\left(
^{j}E\right)  ,$ for all $j\in\mathbb{N}_{0}$ and}
\[
\limsup\limits_{j\rightarrow\infty}\left\Vert \frac{1}{j!}\widehat{d}%
^{j}f\left(  0\right)  \right\Vert _{\Theta}^{\frac{1}{j}}\leq A,
\]
\textrm{endowed with the locally convex topology generated by the family of
seminorms $\left(  p_{\Theta,\rho}^{\infty}\right)  _{\rho>A},$ where
\[
p_{\Theta,\rho}^{\infty}\left(  f\right)  = {\displaystyle\sum\limits_{j=0}%
^{\infty}} \rho^{-j}\left\Vert \frac{1}{j!}\widehat{d}^{j}f\left(  0\right)
\right\Vert _{\Theta}.
\]
}
\end{definition}

We also denote $\mathcal{H}_{\Theta b}\left(  B_{\frac{1}{A}}\left(  0\right)
\right)  $ by $Exp_{\Theta,0,A}^{\infty}\left(  E\right)  $ and we also write
$Exp_{\Theta,0}^{\infty}\left(  E\right)  =Exp_{\Theta,0,0}^{\infty}\left(
E\right)  $. 

\begin{remark}
\rm Note that the space $Exp_{\Theta,0}^{\infty}\left(  E\right)$ coincides with the space $\mathcal{H}_{\Theta b}(E)$ introduced by Nachbin \cite{nachbin2}. 
\end{remark}
\medskip

We need to recall the algebraic isomorphism of the Fourier-Borel transform proved in \cite[Theorems 4.6 and 4.9]{favaro2}. This isomorphism plays a key role in the proofs of the hypercyclicity results for convolution operators and existence and approximation results for convolution equations. To introduce the Fourier-Borel transform it is necessary to recall the definition of the Borel transform and to use that $\Theta$ is a  $\pi_{1}$-holomorphy type. The concept of  $\pi_{1}$-holomorphy type was originally introduced in \cite[Definitions 2.3]{favaro1} and nowadays we use a slight variation of this concept which can be found in \cite[Definition 2.5]{favaro3}. This concept and related notions are very useful to prove results of this type, see e.g. \cite{favaro3, CDSjmaa, favaro1, favaro2,FM, gupta, G, MPS}.

\begin{definition}\label{pi-tipo de holomorfia}
\rm Let $(\mathcal{P}_{\Theta}(^{j}E;F))_{j=0}^{\infty}$ be a
holomorphy type from $E$ to $F$. We say that this holomorphy type is a \emph{$\pi_1$-holomorphy type} if the following conditions hold:
\begin{enumerate}
\item[($1$)]  $P_f(^{j}E;F) \subset \mathcal{P}_{\Theta}(^{j}E;F)$ and there exists $K>0$ such that
$\|\phi^j\cdot b\|_{\Theta}\leq K^ j\|\phi\|^j\|b\|,$ for all $\phi\in E^{\prime}$, $b\in F$ and $j\in \mathbb{N}_0$;
\item[($2$)] For $j\in \mathbb{N}_0$, $P_f(^{j}E;F)$ is dense in $\left(\mathcal{P}_{\Theta}(^{j}E;F), \|\cdot \|_{\Theta}\right)$.
\end{enumerate}
\end{definition}

Now we recall the isomorphism given by Borel transform (see \cite[Definition 4.1]{favaro3} or \cite[p. 915]{favaro1}).

\begin{definition}\rm Let $\Theta$ be a $\pi_{1}$-holomorphy type from $E$ to
$F$. It is clear that the {\it Borel transform}
\[
\mathcal{B}_{\Theta}\colon\left[  \mathcal{P}_{\Theta}(^{m}E;F)\right]
^{\prime}\longrightarrow\mathcal{P}(^{m}E^{\prime};F^{\prime})~, ~ \mathcal{B}_{\Theta} T(\phi)(y)=T(\phi^{m}y),\] for $T\in\left[
\mathcal{P}_{\Theta}(^{m}E;F)\right]  ^{\prime}$, $\phi\in E^{\prime}$ and
$y\in F$, is well defined and linear. Moreover, ${\cal B}_\Theta$ is continuous and injective by conditions (a1) and (a2) of Definition \ref{pi-tipo de holomorfia}. So, denoting the range of $\mathcal{B}_{\Theta}$ in $\mathcal{P}%
(^{m}E^{\prime};F^{\prime})$ by $\mathcal{P}_{\Theta^{\prime}}(^{m}E^{\prime
};F^{\prime})$, the correspondence
$$\mathcal{B}_{\Theta}
T \in \mathcal{P}%
_{\Theta^{\prime}}(^{m}E^{\prime};F^{\prime})\mapsto \|\mathcal{B}_{\Theta}
T\|_{\Theta^{\prime}}:=\|T\|,  $$
defines a norm on $\mathcal{P}%
_{\Theta^{\prime}}(^{m}E^{\prime};F^{\prime})$. 

In this fashion the spaces $\left(  \left[  \mathcal{P}_{\Theta}%
(^{m}E;F)\right]  ^{\prime}\;,\|\cdot\|\right)  $ and $(\mathcal{P}_{\Theta^{\prime}}(^{m}E^{\prime};F^{\prime}),\;\|\cdot
\|_{\Theta^{\prime}})$ are isometrically isomorphic.
\end{definition}

\begin{definition}\rm(\cite[Definition 4.3]{favaro2})
\label{bklinha} Let $(\mathcal{P}_{\Theta}(^{j}E))_{j=0}^{\infty}$ be
a $\pi_{1}$-holomorphy type from $E$ to $\mathbb{C}$. If $\ \rho>0$ and
$k\geq1,$ we denote by $\mathcal{B}_{\Theta^{\prime},\rho}^{k}\left(
E^{\prime}\right)  $ the complex Banach space of all $f\in\mathcal{H}\left(
E^{\prime}\right)  $ such that $\widehat{d} ^{j}f\left(  0\right)
\in\mathcal{P}_{\Theta^{\prime}}\left(  ^{j}E^{\prime}\right)  ,$ for all
$j\in\mathbb{N}_{0}$\ and
\[
\left\Vert f\right\Vert _{\Theta^{\prime},k,\rho}={\displaystyle\sum
\limits_{j=0} ^{\infty}} \rho^{-j}\left(  \frac{j}{ke}\right)  ^{\frac{j}{k}%
}\left\Vert \frac{1}{j!}\widehat{d}^{j}f\left(  0\right)  \right\Vert
_{\Theta^{\prime}}<+\infty.
\]
\end{definition}

As done just below of \cite[Definition 4.3]{favaro2} we may consider the space $Exp_{\Theta^\prime
,A}^{k}\left(  E\right)  $, for every $k\in[1,+\infty]$ and $A\in\left(  0,+\infty\right]$, but in this paper we are particularly interest in case $A=+\infty$. We recall the definition now.

\begin{definition}
\rm Let $\left(  \mathcal{P}_{\Theta}(^{j}E)\right)
_{j=0}^{\infty}$ be a $\pi_1$-holomorphy type from $E$ to $\mathbb{C}$ and $A\in\left(  0,+\infty\right]$. 

\item[(a)] For $k\geq1,$ we denote by $Exp_{\Theta^\prime
,A}^{k}\left(  E\right)  $ the complex vector space ${\displaystyle\bigcup
\limits_{\rho<A}} \mathcal{B}_{\Theta^\prime,\rho}^{k}\left(  E\right)  $ with the
locally convex inductive limit topology. 

\item[(b)] For $k=+\infty,$ $Exp_{\Theta^\prime
,\infty}^{\infty}\left(  E^\prime\right)  $ is a space of germs of holomorphic functions described in the following way:  For $\rho>0,$
we define the complex vector space $\mathcal{H}_{\Theta^\prime}^{\infty}\left(
B_{\frac{1}{\rho}}\left(  0\right)  \right)  $ of all $f\in\mathcal{H}\left(
B_{\frac{1}{\rho}}\left(  0\right)  \right)  ,$  where $B_{\rho}\left(  0\right)  $ denotes the open ball centered in $0$ and radius $\rho$ in $E^{\prime
},$ such that $\widehat{d}
^{j}f\left(  0\right)  \in\mathcal{P}_{\Theta^\prime}\left(  ^{j}E\right)  ,$ for all
$j\in\mathbb{N}_{0}$\ and%
\[
p_{\Theta^\prime,\rho}^{\infty}={\displaystyle\sum\limits_{j=0}^{\infty}} \rho^{-j}\left\Vert \frac{1}%
{j!}\widehat{d}^{j}f\left(  0\right)  \right\Vert _{\Theta^\prime}<+\infty,
\]
which is a Banach space with the norm $p_{\Theta^\prime,\rho}^{\infty}.$

Let $S= {\displaystyle\bigcup\limits_{0<\rho< A}} \mathcal{H}_{\Theta^\prime }^\infty\left(
B_{\frac{1}{\rho}}\left(  0\right)  \right)  $ and define the following
equivalence relation:%
\[
f\sim g\Longleftrightarrow\text{ there is }\rho>0  \text{
such that }f|_{B_{\frac{1}{\rho}}\left(  0\right)  }=g|_{B_{\frac{1}{\rho}%
}\left(  0\right)  .}
\]
We denote by $S\left/\sim\right.  $ the set of all equivalence classes of elements of $S$ and by
$\left[  f\right]  $ the equivalence class which has $f$ as one representative.
If we define the operations
\[
\left[  f\right]  +\left[  g\right]  =\left[  f|_{B_{\frac{1}{\rho}}\left(
0\right)  }+g|_{B_{\frac{1}{\rho}}\left(  0\right)  }\right]  ,
\]
where $\rho\in (0,A)  $ is such that $f|_{B_{\frac{1}{\rho}%
}\left(  0\right)  },g|_{B_{\frac{1}{\rho}}\left(  0\right)  }\in
\mathcal{H}_{\Theta^\prime  }^\infty\left(  B_{\frac{1}{\rho}}\left(  0\right)  \right)  ,$
and
\[
\lambda\left[  f\right]  =\left[  \lambda f\right]  ,\text{\qquad}\lambda
\in\mathbb{C},
\]
then $S\left/  \sim\right.  $ becomes a vector space. For each $\rho\in (0,A)  $, let $i_{\rho}\colon\mathcal{H}_{\Theta^\prime
}^\infty\left(  B_{\frac{1}{\rho}}\left(  0\right)  \right)  \longrightarrow
S\left/  \sim\right.  $ be given by $i_{\rho}\left(  f\right)  =\left[
f\right]  .$ So we define $Exp_{\Theta^\prime
,A}^{\infty}\left(  E\right) $  being the space $S\left/  \sim\right.  $ with the locally convex inductive
limit topology generated by the family $\left(  i_{\rho}\right)  _{\rho\in (0,A)}$. 

In both cases, $Exp_{\Theta^\prime
,A}^{k}\left(  E\right)  $ becomes a $DF$-space.
\end{definition}

Now we are able to recall the algebraic isomorphism given by the Fourier-Borel transform $\mathcal{F}$ in \cite[Theorem 4.6]{favaro2}. We define
$\lambda\left(  k\right)  =\frac{k}{\left(  k-1\right)  ^{\frac{k-1}{k}}},$
for$\ k\in\left(  1,+\infty\right)  .$ Since $\lim\limits_{k\rightarrow\infty}\lambda\left(
k\right) =1 ,$ we set $\lambda\left(  \infty\right)
=1.$ When $A=0$ we write $A^{-1}=+\infty.$

\begin{theorem}\label{fourier_borel}
Let $k\in\left(  1,+\infty\right]  $,  $A\in\left[  0,+\infty\right)  $ and $(\mathcal{P}_{\Theta}(^{j}E))_{j=0}^{\infty}$
be a $\pi_{1}$-holomorphy type from $E$ to $\mathbb{C}$.
%
Then the Fourier-Borel transform%
\[
\mathcal{F}\colon\left[  Exp_{\Theta  ,0, A}%
^{k}\left(  E\right)  \right]  ^{\prime}\longrightarrow Exp_{\Theta^{\prime}  , (\lambda(k)A)^{-1}}^{k^{\prime}}\left(  E^{\prime
}\right)  ,
\]
given by $\mathcal{F}T\left(  \varphi\right)  =T\left(  e^{\varphi}\right)  ,$ for all
$T\in\left[  Exp_{\Theta  ,0,A}%
^{k}\left(  E\right)  \right]  ^{\prime}$\ and $\varphi\in E^{\prime},$
establishes an algebraic isomorphism.

Note that, when $A=0$, we have
$\mathcal{F}$ from $\left[  Exp_{\Theta  ,0}%
^{k}\left(  E\right)  \right]  ^{\prime}$ to $Exp_{\Theta^{\prime}  , \infty}^{k^{\prime}}\left(  E^{\prime
}\right) .$

%
\end{theorem}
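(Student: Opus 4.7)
The plan is to prove the result in four stages: well-definedness, linearity (trivial), injectivity, and surjectivity, with the scaling constant $\lambda(k)$ emerging from a careful Stirling estimate together with the identity $k^{1/k}(k')^{1/k'}=\lambda(k)$.

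First, for well-definedness, start with $T\in[Exp_{\Theta,0,A}^{k}(E)]'$. Since $Exp_{\Theta,0,A}^{k}(E)$ is the projective limit of the Banach spaces $\mathcal{B}_{\Theta,\rho}^{k}(E)$ for $\rho>A$, continuity of $T$ gives $\rho>A$ and $C>0$ with $|T(f)|\le C\|f\|_{\Theta,k,\rho}$. Testing against monomials $\varphi^{j}\in\mathcal{P}_{f}(^{j}E)\subset\mathcal{P}_{\Theta}(^{j}E)$ (using that $\Theta$ is $\pi_1$), I get $\|T|_{\mathcal{P}_{\Theta}(^{j}E)}\|\le C\rho^{-j}(j/(ke))^{j/k}$. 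Expanding $e^{\varphi}=\sum_{j}\varphi^{j}/j!$ term by term (partial sums converge in the norm $\|\cdot\|_{\Theta,k,\rho}$ by definition), I identify $\tfrac{1}{j!}\widehat{d}^{j}(\mathcal{F}T)(0)=\tfrac{1}{j!}\mathcal{B}_{\Theta}(T|_{\mathcal{P}_{\Theta}(^{j}E)})\in\mathcal{P}_{\Theta'}(^{j}E')$, with norm bounded by $(C/j!)\rho^{-j}(j/(ke))^{j/k}$. A Stirling bound $j!\sim(j/e)^{j}\sqrt{2\pi j}$ combined with the algebraic identity
\[
k^{1/k}(k')^{1/k'}=\frac{k}{(k-1)^{(k-1)/k}}=\lambda(k)
\]
converts this into $\|\tfrac{1}{j!}\widehat{d}^{j}(\mathcal{F}T)(0)\|_{\Theta'}\le M(\sigma)(j/(k'e))^{-j/k'}\sigma^{j}$ for any $\sigma>(\rho\lambda(k))^{-1}$. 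Since $\rho>A$ is arbitrary, one can pick $\sigma<(\lambda(k)A)^{-1}$, putting $\mathcal{F}T\in\mathcal{B}_{\Theta',\sigma}^{k'}(E')\subset Exp_{\Theta',(\lambda(k)A)^{-1}}^{k'}(E')$.

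For injectivity, suppose $\mathcal{F}T\equiv 0$. Then for fixed $\varphi\in E'$ the entire function $t\mapsto T(e^{t\varphi})$ vanishes identically, and expanding gives $T(\varphi^{j})=0$ for all $j\in\mathbb{N}_{0}$ and $\varphi\in E'$. By linearity $T$ vanishes on $\mathcal{P}_{f}(^{j}E)$, and by the $\pi_{1}$-density property $\mathcal{P}_{f}(^{j}E)$ is dense in $(\mathcal{P}_{\Theta}(^{j}E),\|\cdot\|_{\Theta})$. Continuity of $T|_{\mathcal{P}_{\Theta}(^{j}E)}$ together with the Taylor series expansion of $f\in Exp_{\Theta,0,A}^{k}(E)$ (which converges in the norm $\|\cdot\|_{\Theta,k,\rho}$) yields $T\equiv 0$.

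For surjectivity, given $g\in Exp_{\Theta',(\lambda(k)A)^{-1}}^{k'}(E')$, choose $\sigma<(\lambda(k)A)^{-1}$ with $g\in\mathcal{B}_{\Theta',\sigma}^{k'}(E')$ and put $g_{j}=\tfrac{1}{j!}\widehat{d}^{j}g(0)\in\mathcal{P}_{\Theta'}(^{j}E')$. By the isometry $\mathcal{B}_{\Theta}\colon[\mathcal{P}_{\Theta}(^{j}E)]'\to\mathcal{P}_{\Theta'}(^{j}E')$ there is a unique $T_{j}\in[\mathcal{P}_{\Theta}(^{j}E)]'$ with $\mathcal{B}_{\Theta}T_{j}=j!\,g_{j}$ and $\|T_{j}\|=j!\|g_{j}\|_{\Theta'}$. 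Define $T\colon Exp_{\Theta,0,A}^{k}(E)\to\mathbb{C}$ by $T(f)=\sum_{j}T_{j}(\tfrac{1}{j!}\widehat{d}^{j}f(0))$. The same Stirling–$\lambda(k)$ computation run in reverse gives $\|T_{j}\|\le M(\rho)\rho^{-j}(j/(ke))^{j/k}$ for every $\rho$ with $(\sigma\lambda(k))^{-1}<\rho$; picking such $\rho$ arbitrarily close to $A$ (possible because $\sigma<(\lambda(k)A)^{-1}$) shows $|T(f)|\le M(\rho)\|f\|_{\Theta,k,\rho}$, so $T\in[Exp_{\Theta,0,A}^{k}(E)]'$. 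A direct termwise calculation then verifies $\mathcal{F}T(\varphi)=\sum_{j}\tfrac{1}{j!}(\mathcal{B}_{\Theta}T_{j})(\varphi)=\sum_{j}g_{j}(\varphi)=g(\varphi)$.

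The main obstacle is pinning down the exact scaling: one must track how the weights $(j/(ke))^{j/k}$ on one side and $(j/(k'e))^{j/k'}$ on the other, mediated by the $j!$ coming from the Taylor expansion of $e^{\varphi}$, combine via Stirling to produce precisely $\lambda(k)$. Every other step is essentially a bookkeeping consequence of the Borel isometry and the projective/inductive limit structure. For $k=+\infty$ (so $k'=1$, $\lambda(\infty)=1$) the weights $(j/(ke))^{j/k}$ degenerate to $1$ and no Stirling estimate is needed; the same scheme applies directly with the space on the right being the $DF$-space of germs of holomorphic functions on a ball of radius $A^{-1}$ in $E'$.
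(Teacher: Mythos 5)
The paper itself does not prove this statement: it is imported verbatim from \cite[Theorems 4.6 and 4.9]{favaro2}, so there is no in-paper proof to compare against. Your reconstruction follows exactly the route of that cited source: reduce to the degree-$j$ pieces via the Borel isometry $[\mathcal{P}_{\Theta}(^{j}E)]'\cong\mathcal{P}_{\Theta'}(^{j}E')$, convert the weight $(j/(ke))^{j/k}$ into $(j/(k'e))^{-j/k'}$ through Stirling and the identity $k^{1/k}(k')^{1/k'}=\lambda(k)$, and run the estimate in both directions for well-definedness and surjectivity, with injectivity from $\pi_1$-density of finite-type polynomials. This is correct in substance. Two slips should be fixed. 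First, in the surjectivity step the condition on $\rho$ is reversed: the Stirling computation gives $\|T_j\|\le M\rho^{-j}(j/(ke))^{j/k}$ precisely when $\rho\sigma\lambda(k)<1$, i.e.\ $\rho<(\sigma\lambda(k))^{-1}$, and since $\sigma<(\lambda(k)A)^{-1}$ the interval $A<\rho<(\sigma\lambda(k))^{-1}$ is nonempty; one such $\rho$ suffices for continuity (you do not need $\rho$ arbitrarily close to $A$). As written, ``$(\sigma\lambda(k))^{-1}<\rho$'' is incompatible with ``$\rho$ arbitrarily close to $A$''. Second, your closing remark on $k=+\infty$ misidentifies the target: since $k'=1<\infty$, the range space $Exp^{1}_{\Theta',A^{-1}}(E')$ is the inductive limit $\bigcup_{\rho<A^{-1}}\mathcal{B}^{1}_{\Theta',\rho}(E')$ of entire functions of exponential type, not a space of germs on a ball; germs occur only for $k'=\infty$, i.e.\ $k=1$, which the theorem excludes. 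Moreover, while the source-side weights $(j/(ke))^{j/k}$ do degenerate to $1$ as $k\to\infty$, the target-side weight $(j/e)^{j}$ still has to be compared with $j!$ via Stirling, so that estimate does not disappear.
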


%

\section{Convolution operators}
We start with a preliminary result we need to
introduce convolution operators on $Exp_{\Theta  ,0}^{k}\left( E\right)$.

\begin{proposition}\label{derivadas}
\label{dnfa}Let $\left(  \mathcal{P}_{\Theta}(^{j}E)\right)
_{j=0}^{\infty}$ be a holomorphy type from $E$ to $\mathbb{C}$,
 $a\in E,$ $k\in\left[  1,+\infty\right]  ,$ $A\in\left[
0,+\infty\right)  $ and $f\in Exp_{\Theta  ,0,A}^{k}\left( E\right) .$ Then
$\widehat{d}^{n}f\left(  \cdot\right)  a\in Exp_{\Theta ,0,\sigma A}^{k}\left(  E\right)  ,$ for any constant $\sigma$ satisfying condition (3) of Definition \ref{holomorphy type}. Besides
{\small
\[
\widehat{d}^{n}f\left(  \cdot\right)  a=%
{\displaystyle\sum\limits_{j=0}^{\infty}}
\frac{1}{ j!}\overset
{\begin{picture}(60,5)\put(0,0){\line(6,1){30}}\put(30,5){\line(6,-1){30}}\end{picture}}%
{d^{j+n}f\left(  0\right)  \cdot^{j}}\left(  a\right)  ,
\]}
in the topology of $Exp_{\Theta  ,0,\sigma A}^{k}\left(  E\right)  .$
\end{proposition}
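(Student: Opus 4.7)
The plan is to recognize the sum on the right-hand side as the Taylor expansion at the origin of the entire function $g(x):=\widehat{d}^{n}f(x)a$, to estimate the norm of each Taylor coefficient via condition~(2) of Definition~\ref{holomorphy type}, and then to conclude membership in $Exp_{\Theta,0,\sigma A}^{k}(E)$ through the limsup growth characterization of Proposition~\ref{caracterizacao sm(r,q)} (or Definition~\ref{expInfinito} when $k=+\infty$). Convergence of the series in the Fréchet topology will then reduce to absolute convergence in each seminorm $\|\cdot\|_{\Theta,k,\rho}$ for $\rho>\sigma A$.

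First I would write $f(y)=\sum_{m=0}^{\infty}\tfrac{1}{m!}\widehat{d}^{m}f(0)(y)$ and differentiate term by term. Applying the formula of Definition~\ref{def_holomorfia} to each $m$-homogeneous component $P_{m}=\tfrac{1}{m!}\widehat{d}^{m}f(0)$, only the terms with $m\geq n$ survive, and after the substitution $m=j+n$ the $j$-th homogeneous piece of $g$ is
\[
Q_{j}(x)\;=\;\frac{1}{j!}\bigl(\widehat{d}^{j+n}f(0)\bigr)^{\vee}(\underbrace{x,\ldots,x}_{j},\underbrace{a,\ldots,a}_{n}),
\]
which is precisely the term appearing in the statement.

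To estimate $\|Q_{j}\|_{\Theta}$, a further use of Definition~\ref{def_holomorfia} identifies $Q_{j}$ as an explicit scalar multiple of $\widehat{d}^{\,j}\bigl(\widehat{d}^{j+n}f(0)\bigr)(a)$. Condition~(2) of Definition~\ref{holomorphy type}, invoked on the polynomial $\widehat{d}^{j+n}f(0)\in\mathcal{P}_{\Theta}(^{j+n}E)$ with derivative order $j$ at the point $a$, then gives $Q_{j}\in\mathcal{P}_{\Theta}(^{j}E)$ together with a bound of the shape
\[
\|Q_{j}\|_{\Theta}\;\leq\;n!\,\sigma^{j+n}\|a\|^{n}\left\|\tfrac{1}{(j+n)!}\widehat{d}^{j+n}f(0)\right\|_{\Theta}.
\]
Taking $j$-th roots and observing that $(n!)^{1/j}\to 1$, $\sigma^{(j+n)/j}\to\sigma$, $\|a\|^{n/j}\to 1$ and $(j+n)/j\to 1$, a short manipulation combined with the limsup bound provided for $f$ by Proposition~\ref{caracterizacao sm(r,q)} yields
\[
\limsup_{j\to\infty}\left(\frac{j}{ke}\right)^{1/k}\|Q_{j}\|_{\Theta}^{1/j}\;\leq\;\sigma A,
\]
which, by that same proposition (or Definition~\ref{expInfinito} when $k=+\infty$), places $g$ in $Exp_{\Theta,0,\sigma A}^{k}(E)$.

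The convergence of $\sum_{j}Q_{j}$ to $g$ in $Exp_{\Theta,0,\sigma A}^{k}(E)$ is then automatic: for every $\rho>\sigma A$ the estimate above makes $\sum_{j}\rho^{-j}(j/(ke))^{j/k}\|Q_{j}\|_{\Theta}$ finite, so the Taylor series of $g$ converges absolutely in $\|\cdot\|_{\Theta,k,\rho}$, and since these seminorms (or the $p_{\Theta,\rho}^{\infty}$ when $k=+\infty$) define the Fréchet topology, convergence there follows. The main obstacle I expect is purely bookkeeping—tracking the combinatorial factors $m!/(m-n)!$, $(j+n)!/n!$ and $1/j!$ arising from the two uses of Definition~\ref{def_holomorfia}, and then verifying that the ratios $(j+n)/j$, $n/j$ and $(ke/(j+n))^{n/(jk)}$ all wash out in the limit so that exactly the factor $\sigma$ appears multiplying $A$.
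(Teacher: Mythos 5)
Your proposal is correct and follows essentially the same route as the paper's proof: identify the $j$-homogeneous component of $\widehat{d}^{\,n}f(\cdot)a$ as a scalar multiple of $\widehat{d}^{\,j}\bigl(\widehat{d}^{\,j+n}f(0)\bigr)(a)$, bound its $\Theta$-norm by $n!\,\sigma^{j+n}\|a\|^{n}\bigl\|\tfrac{1}{(j+n)!}\widehat{d}^{\,j+n}f(0)\bigr\|_{\Theta}$ via the holomorphy-type inequality, pass to the limsup of $j$-th roots to get the bound $\sigma A$, and deduce convergence in each seminorm $\|\cdot\|_{\Theta,k,\rho}$ with $\rho>\sigma A$. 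The only differences are cosmetic (swapped index names and deriving the pointwise expansion by term-by-term differentiation rather than citing Nachbin).
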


\begin{proof}
It is known (see Nachbin \cite[p. 29]{nachbin2}) that, for a fixed $j\in\mathbb{N}_0,$
{\small
\begin{equation}
\widehat{d}^{j}f\left(  x\right)  a=%
{\displaystyle\sum\limits_{n=0}^{\infty}} \frac{1}{ n!}\overset
{\begin{picture}(60,5)\put(0,0){\line(6,1){30}}\put(30,5){\line(6,-1){30}}\end{picture}}%
{d^{j+n}f\left(  0\right)  x^{n}}\left(  a\right)  =%
{\displaystyle\sum\limits_{n=0}^{\infty}} \frac{1}{ n!}\overset
{\begin{picture}(60,5)\put(0,0){\line(6,1){30}}\put(30,5){\line(6,-1){30}}\end{picture}}%
{d^{j+n}f\left(  0\right)  a^{j}}\left(  x\right)  , \label{serieConvPontual}%
\end{equation}
}%
for all $x\in E$. Since $\hat{d}^{m}f(0)\in\mathcal{P}_{\Theta}(^{m}E),$ for all
$m\in \mathbb{N}_{0},$ then $\Hhat{d^{j+n}f(0)a^j}\in
\mathcal{P}_{\Theta}(^{n}E)$ and%
{\small
\begin{equation}
\label{Teo1_desig1}\left\Vert\Hhat{d^{j+n}f(0)a^j}\right\Vert_{\Theta}\leq\frac{n!j!\sigma^{j+n}}%
{(j+n)!}\left\Vert\hat{d}^{j+n}f(0)\right\Vert_{\Theta}\Vert
a\Vert^{j},
\end{equation}
}

\noindent for all $n\in\mathbb{N}_0.$ In fact, let $P$ being the $(n+j)$-homogeneous polynomial $\hat{d}^{j+n}f(0)$. Thus $\check{P}=d^{j+n}f(0)$ and it follows from Definition \ref{def_holomorfia} that
\small{\[
\Hhat{d^{j+n}f(0)a^j}=\widehat{\check{P}a^j}=\frac{j!}{(n+j)!}\hat{d}^n P(a).
\]}

Using condition $(3)$ of Definition $\ref{holomorphy type}$ we have

\small{\[
\left\Vert\Hhat{d^{j+n}f(0)a^j}\right\Vert_{\Theta}=\frac{j!}{(n+j)!}\left\Vert \hat{d}^n P(a)\right\Vert_{\Theta}\leq\frac{n!j!\sigma^{j+n}}%
{(j+n)!}\left\Vert P\right\Vert_{\Theta}\Vert
a\Vert^{j}=\frac{n!j!\sigma^{j+n}}%
{(j+n)!}\left\Vert\hat{d}^{j+n}f(0)\right\Vert_{\Theta}\Vert
a\Vert^{j}.
\]}

For $k\in\left[  1,+\infty\right)  ,$ let%
{\small
\[
N=\limsup\limits_{n\rightarrow\infty}\left(  \frac{n+j}{ke}\right)
^{\frac {1}{k}}\left\Vert \frac{\widehat{d}^{n+j}f\left(  0\right)
}{\left( n+j\right)  !}\right\Vert _{\Theta }^{\frac{1}{n+j}}.
\]}%
By Proposition \ref{caracterizacao sm(r,q)} we have $N\leq A<+\infty$. Then for every $\varepsilon>0,$ there is $C\left(
\varepsilon\right)  >0$
such that%
{\small
\begin{equation}
\label{Teo1_desig2}\left(  \frac{n+j}{ke}\right)  ^{\frac{n+j}{k}}\left\Vert
\frac{\widehat {d}^{n+j}f\left(  0\right)  }{\left(  n+j\right)
!}\right\Vert _{\Theta
}\leq C\left(  \varepsilon\right) \left(  N+\varepsilon\right)
^{n+j},
\end{equation}}%
for all  $n\in\mathbb{N}_0$. Hence%
{\small
\begin{gather}
\left(  \frac{n}{ke}\right)  ^{\frac{n}{k}}\frac{1}{n!}\left\Vert
\overset
{\begin{picture}(60,5)\put(0,0){\line(6,1){30}}\put(30,5){\line(6,-1){30}}\end{picture}}%
{d^{j+n}f\left(  0\right)  a^{j}}\right\Vert _{\Theta}\overset
{(\ref{Teo1_desig1})}{\leq}\left(  \frac{n}{ke}\right)  ^{\frac{n}{k}}\frac{j!\sigma^{j+n}}{(j+n)!}\left\Vert \widehat{d}%
^{n+j}f\left(  0\right)  \right\Vert _{\Theta}\left\Vert a\right\Vert
^{j}\nonumber\\
=j!\sigma^{j+n}\left(  \frac{n}{ke}\right)  ^{\frac{n}{k}}\left\Vert
\frac{\widehat{d}^{n+j}f\left(  0\right)  }{\left(  n+j\right)  !}\right\Vert
_{\Theta}\left\Vert a\right\Vert ^{j}\overset{(\ref{Teo1_desig2})}{\leq
}j!\sigma^{j+n}\left(  \frac{n}{ke}\right)  ^{\frac{n}{k}}\left(  \frac
{ke}{n+j}\right)  ^{\frac{n+j}{k}}C\left(  \varepsilon\right)  \left(
N+\varepsilon\right)  ^{n+j}\left\Vert a\right\Vert ^{j}\nonumber\\
=j!\sigma^{j+n}\left(  \frac{n}{n+j}\right)  ^{\frac{n}{k}}\left(  \frac
{ke}{n+j}\right)  ^{\frac{j}{k}}C\left(  \varepsilon\right)  \left(
N+\varepsilon\right)  ^{n+j}\left\Vert a\right\Vert ^{j}.\label{des1}%
\end{gather}
}%
Since%
{\small
\[
\lim_{n\rightarrow\infty}\left(  j!\right)  ^{\frac{1}{n}}\sigma^{\frac{j}%
{n}+1}\left(  \frac{n}{n+j}\right)  ^{\frac{1}{k}}\left(  \frac{ke}%
{n+j}\right)  ^{\frac{j}{kn}}=\sigma,
\]
}%
there is $D\left(  \varepsilon\right)  >0$ such that
{\small
\begin{equation}
j!\sigma^{j+n}\left(  \frac{n}{n+j}\right)  ^{\frac{n}{k}}\left(  \frac
{ke}{n+j}\right)  ^{\frac{j}{k}}\leq D\left(
\varepsilon\right)  \left(  \sigma+\varepsilon\right)  ^{n}, \label{des2}%
\end{equation}}%
\noindent for all  $n\in\mathbb{N}_0$. From (\ref{des1}) and (\ref{des2}) we obtain%
{\small
\[
\left(  \frac{n}{ke}\right)  ^{\frac{n}{k}}\frac{1}{n!}\left\Vert
\overset
{\begin{picture}(60,5)\put(0,0){\line(6,1){30}}\put(30,5){\line(6,-1){30}}\end{picture}}%
{d^{j+n}f\left(  0\right)  a^{j}}\right\Vert _{\Theta}\leq C\left(  \varepsilon\right)
D\left( \varepsilon\right)  \left\Vert a\right\Vert ^{j}\left(
N+\varepsilon\right) ^{j}\left[  \left(  \sigma +\varepsilon\right)
\left(  N+\varepsilon\right) \right]  ^{n},
\]}%
for all $n\in\mathbb{N}_0$ and $\varepsilon>0.$ Therefore%
{\small
\[
\limsup\limits_{n\rightarrow\infty}\left(  \frac{n}{ke}\right)  ^{\frac{1}{k}%
}\left\Vert \frac{\overset
{\begin{picture}(60,5)\put(0,0){\line(6,1){30}}\put(30,5){\line(6,-1){30}}\end{picture}}%
{d^{j+n}f\left(  0\right)  a^{j}}}{n!}\right\Vert
_{\Theta
}^{\frac{1}{n}}\leq\left(  \sigma+\varepsilon \right)  \left(
N+\varepsilon\right)  ,
\]}%
for all $\varepsilon>0$, which implies%
{\small
\[
\limsup\limits_{n\rightarrow\infty}\left(  \frac{n}{ke}\right)  ^{\frac{1}{k}%
}\left\Vert \frac{\overset
{\begin{picture}(60,5)\put(0,0){\line(6,1){30}}\put(30,5){\line(6,-1){30}}\end{picture}}%
{d^{j+n}f\left(  0\right)  a^{j}}}{n!}\right\Vert
_{\Theta
}^{\frac{1}{n}}\leq\sigma N.
\]}%

\noindent Since $N\leq A$, then $\sigma N\leq\sigma A$ and so $\widehat{d}^{n}f\left(  \cdot\right)  a\in
Exp_{\Theta
,0,\sigma A}^{k}\left( E\right)  $.
\newline Now we consider $k=+\infty.$ If $A=0$, then
$\mathcal{H}_{\Theta b
}\left(  E\right) =Exp_{\Theta ,0}^{\infty}\left(  E\right)  $ and this case was proved in \cite[Proposition 3.1 (i)]{favaro1}. If $A\neq 0$, then for $f\in Exp_{\Theta ,0,A}^{\infty}\left(  E\right)
=\mathcal{H}_{\Theta b
}\left(  B_{\frac{1}{A}}\left(
0\right)  \right)  $ we have%
{\small
\[
\limsup\limits_{n\rightarrow\infty}\left\Vert
\frac{\widehat{d}^{n+j}f\left( 0\right)  }{\left(  n+j\right)
!}\right\Vert _{\Theta
}^{\frac{1}{n+j}}\leq  A
\]}%
and as above we obtain
{\small
\[
\limsup\limits_{n\rightarrow\infty}\left\Vert \frac{\overset
{\begin{picture}(60,5)\put(0,0){\line(6,1){30}}\put(30,5){\line(6,-1){30}}\end{picture}}%
{d^{j+n}f\left(  0\right)  a^{j}}}{n!}\right\Vert
_{\Theta
}^{\frac{1}{n}}\leq \sigma A.
\]}%
Thus $\widehat{d}^{n}f\left(  \cdot\right) a\in\mathcal{H}_{\Theta b }\left(
B_{\frac{1}{\sigma A}}\left( 0\right)  \right) =Exp_{\Theta ,0,\sigma A}^{\infty}\left(  E\right)  .$

Now we only have to prove the
convergence of the series in the topology of $Exp_{\Theta
,0,\sigma A}^{k}\left( E\right)  $. If
$f\in\mathcal{B}_{\Theta
,\rho}^{k}\left(  E\right)  $ for some $\rho>0$ with $k\in\left[
1,+\infty
\right)  ,$ repeating the argument above with $\rho$ instead of $L$ we obtain constants $C_{1}\left(  \varepsilon\right)>0$ and $D_{1}\left(  \varepsilon\right)>0$ such that%
{\small
\begin{gather*}
\left\Vert \widehat{d}^{j}f\left(  \cdot\right)  a-%
{\displaystyle\sum\limits_{n=0}^{v}}
\left(  n!\right)  ^{-1}\overset
{\begin{picture}(60,5)\put(0,0){\line(6,1){30}}\put(30,5){\line(6,-1){30}}\end{picture}}%
{d^{j+n}f\left(  0\right)  \cdot^{n}}\left(  a\right)  \right\Vert
_{\Theta  ,k,\rho_{0}}
\leq%
{\displaystyle\sum\limits_{n=v+1}^{\infty}}
\rho_{0}^{-n}\left(  \frac{n}{ke}\right)  ^{\frac{1}{k}}\left\Vert
\left( n!\right)  ^{-1}\widehat{d}^{j+n}f\left(  0\right)
\right\Vert _{\Theta  }\left\Vert a\right\Vert ^{j}\sigma^{ n+j}\\
\leq C_{1}\left(  \varepsilon\right)  D_{1}\left(  \varepsilon\right)
\left\Vert
a\right\Vert ^{j}\left(  \rho+\varepsilon\right)  ^{j}%
{\displaystyle\sum\limits_{n=v+1}^{\infty}}
\left[  \rho_{0}^{-1}\left(  \rho+\varepsilon\right)  \left(
\sigma+\varepsilon \right)  \right]  ^{n},
\end{gather*}
}%
and this tends to zero when $v\rightarrow\infty,$ for
$\rho_{0}>\rho$ and $\varepsilon>0$ such that $\left(
\rho+\varepsilon\right) \left( \sigma+\varepsilon\right)  <\rho_{0}$. Hence $
{\displaystyle\sum\limits_{j=0}^{\infty}}
\frac{1}{ j!}\overset
{\begin{picture}(60,5)\put(0,0){\line(6,1){30}}\put(30,5){\line(6,-1){30}}\end{picture}}%
{d^{j+n}f\left(  0\right)  \cdot^{j}}\left(  a\right)
$
converges to $\widehat{d}^{n}f\left(  \cdot\right)  a$ in the topology of $Exp_{\Theta
,0,\sigma A}^{k}\left( E\right)  .$ The case $k=+\infty$ is
analogous.
\end{proof}

Now we restrict ourselves to the case $A=0.$ The case for an arbitrary $A$ shall be treated later (cf. Theorem \ref{main3}).
The following concept is well defined due to Proposition \ref{derivadas}.

\begin{definition}
\rm\textrm{\label{defOperConv} For $k\in\left[
1,+\infty\right] ,$ a
\emph{convolution operator on }$Exp_{\Theta ,0}^{k}\left( E\right) $ is a continuous
linear mapping {\small
\[ L\colon Exp_{\Theta  ,0}^{k}\left(  E\right)
\longrightarrow Exp_{\Theta ,0}^{k}\left(  E\right)
\]}%
such that $d\left( Lf\right)  \left( \cdot\right)
a=L\left( df\left(  \cdot\right)  a\right)  $ for all
$a\in E$ and $f\in Exp_{\Theta ,0}^{k}\left(  E\right)  .$ We denote the set of
all convolution
operators on $Exp_{\Theta  ,0}
^{k}\left(  E\right)  $ by  $\mathcal{A}_{\Theta
,0}^{k}.$}
\end{definition}

\medskip

Using induction it is easy to check that
convolution operators commute with all the directional derivatives
of all orders, that is, for all $a\in E, n\in\mathbb{N} $ and $L\in\mathcal{A}_{\Theta
,0}^{k}$,
$L\left(  \widehat{d}^{n}f\left( \cdot\right)  a\right)
=\widehat{d}^{n}\left(  Lf\right) \left(  \cdot\right)
a.$ In Theorem \ref{teorema_translacao} we shall prove that convolution operators could have been defined replacing the condition
$d\left(  Lf\right)  \left( \cdot\right)  a=L\left(
df\left(  \cdot\right)  a\right)  $ by $\tau_{-a}\left(
L\left( f\right)  \right) =L\left(
\tau_{-a}f\right)  $ for all $a\in E,$ where $\tau_{-a}f\left(
x\right)  =f\left(  x+a\right)  ,$ for all $x\in E.$ So, commutativity with the directional
derivatives or translations are equivalent concepts. First we need to prove that the translations are well-defined.

\begin{proposition}
\label{traslacao}

Let $\left(  \mathcal{P}_{\Theta}(^{j}E)\right)
_{j=0}^{\infty}$ be a holomorphy type from $E$ to $\mathbb{C}$ and $k\in\left[ 1,+\infty\right]  .$ If $f\in Exp_{\Theta
,0}^{k}\left( E\right)  $ and $a\in E,$ then $\tau_{-a}f\in
Exp_{\Theta ,0}^{k}\left( E\right)
$ and%
{\small
\[
\tau_{-a}f=%
{\displaystyle\sum\limits_{n=0}^{\infty}}
\frac{1}{n!}\widehat{d}^{n}f\left(  \cdot\right)  a,
\]}%
in the topology of $Exp_{\Theta  ,0}^{k}\left(  E\right)  .$
\end{proposition}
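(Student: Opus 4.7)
The plan is to mimic the proof of Proposition \ref{derivadas}, interchanging the roles of the summation index $n$ and the derivative order $j$. First, for each fixed $x\in E$, the classical Taylor expansion of a holomorphic function around $x$ (Nachbin \cite{nachbin2}, the same fact quoted in (\ref{serieConvPontual})) yields
\[
\tau_{-a}f(x)=f(x+a)=\sum_{n=0}^{\infty}\frac{1}{n!}\widehat{d}^{n}f(x)a,
\]
so the series converges pointwise to $\tau_{-a}f(x)$ in $\mathbb{C}$. By Proposition \ref{derivadas} applied with $A=0$ (hence $\sigma A=0$), each term $\widehat{d}^{n}f(\cdot)a$ already lies in $Exp_{\Theta,0}^{k}(E)$, so every partial sum belongs to the target space.

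Next, I would upgrade this to convergence in the projective-limit topology of $Exp_{\Theta,0}^{k}(E)=\bigcap_{\rho_{0}>0}\mathcal{B}_{\Theta,\rho_{0}}^{k}(E)$ by estimating each norm $\|\cdot\|_{\Theta,k,\rho_{0}}$. The Taylor expansion of $\widehat{d}^{n}f(\cdot)a$ at the origin has $m$-th coefficient controlled, via (\ref{Teo1_desig1}) and condition (3) of Definition \ref{holomorphy type}, by
\[
\frac{1}{n!}\|\widehat{d}^{n}f(\cdot)a\|_{\Theta,k,\rho_{0}}\leq\sum_{m=0}^{\infty}\rho_{0}^{-m}\left(\frac{m}{ke}\right)^{m/k}\frac{\sigma^{n+m}}{(n+m)!}\|\widehat{d}^{n+m}f(0)\|_{\Theta}\|a\|^{n}.
\]
Since $f\in\mathcal{B}_{\Theta,\rho}^{k}(E)$ for \emph{every} $\rho>0$, I may choose $\rho$ arbitrarily small. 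Reusing the asymptotic bookkeeping of (\ref{des1})--(\ref{des2}), the right-hand side is majorized by a constant times $\|a\|^{n}[(\sigma+\varepsilon)(\rho+\varepsilon)]^{n}$ times a series in $m$ that converges whenever $(\rho+\varepsilon)(\sigma+\varepsilon)<\rho_{0}$ and $\varepsilon>0$ is taken small. Summing in $n$ then shows both that $\tau_{-a}f\in\mathcal{B}_{\Theta,\rho_{0}}^{k}(E)$ and that the tail $\|\tau_{-a}f-\sum_{n=0}^{v}\frac{1}{n!}\widehat{d}^{n}f(\cdot)a\|_{\Theta,k,\rho_{0}}$ vanishes as $v\to\infty$.

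Combining the pointwise identity with the norm estimate, uniqueness of the limit (evaluation at each point is continuous on $\mathcal{B}_{\Theta,\rho_{0}}^{k}(E)$, and a holomorphic function on $E$ is determined by its germ at $0$) forces the norm limit of the series to coincide with $\tau_{-a}f$, giving the identity in the topology of $Exp_{\Theta,0}^{k}(E)$. The case $k=+\infty$ is handled by the analogous calculation with Definition \ref{expInfinito} replacing Definition \ref{bk}, as at the end of the proof of Proposition \ref{derivadas}. The main obstacle is the careful tuning of $\rho$ and $\varepsilon$ in terms of $\rho_{0}$, $\sigma$ and $\|a\|$ so that the double series above is absolutely convergent for \emph{every} $\rho_{0}>0$; this is essentially the same, merely reindexed, bookkeeping already performed in Proposition \ref{derivadas}.
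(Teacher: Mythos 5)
Your proposal is correct and follows essentially the same route as the paper: the same Taylor expansion of $\tau_{-a}f$ in terms of $\Hhat{d^{n+m}f(0)a^{n}}$, the same norm estimate from condition (2) of Definition \ref{holomorphy type}, and the same tuning of $\rho$ and $\varepsilon$ against $\rho_{0}$, $\sigma$ and $\|a\|$ to control the double series. The only cosmetic difference is that the paper first checks membership of $\tau_{-a}f$ via the $\limsup$ criterion of Proposition \ref{caracterizacao sm(r,q)} and then separately bounds the tail, while you fold both into a single estimate of $\|\cdot\|_{\Theta,k,\rho_{0}}$.
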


\begin{proof}
The case $k=+\infty$ was proved in
\cite[Proposition 3.1 (ii)]{favaro1}. For
$k\in\left[  1,+\infty\right)  $ and $f\in Exp_{\Theta,0}
^{k}\left(  E\right)  $ we have that
{\small
\begin{equation*}
\limsup\limits_{j\rightarrow\infty}\left(  \frac{j}{ke}\right)  ^{\frac{1}{k}%
}\left\Vert \frac{\widehat{d}^{j}f\left(  0\right)
}{j!}\right\Vert _{\Theta }^{\frac{1}{j}}= 0.
\label{limsup3}%
\end{equation*}
}%
Thus for all $\varepsilon>0$ there is $C\left( \varepsilon\right)
>0$ such
that%
{\small
\begin{equation}
\left(  \frac{j}{ke}\right)  ^{\frac{j}{k}}\left\Vert \frac{\widehat{d}%
^{j}f\left(  0\right)  }{j!}\right\Vert _{\Theta }\leq C\left(
\varepsilon\right)   \varepsilon^{j}, \label{epsilon+limsup}%
\end{equation}
}%
for all  $j\in\mathbb{N}$. Since, for each $n\in\mathbb{N}$, $\widehat{d}^{n}\left(
\tau_{-a}f\right)  \left( 0\right)
=\widehat{d}^{n}f\left(  a\right)  $ then we have%
{\small \[ \left\Vert \widehat{d}^{n}\left(
\tau_{-a}f\right)  \left( 0\right) \right\Vert _{\Theta
}=\left\Vert \widehat{d}^{n}f\left( a\right) \right\Vert _{\Theta
}\leq {\displaystyle\sum\limits_{j=0}^{\infty}}
\frac{1}{j!}\left\Vert\overset
{\begin{picture}(60,5)\put(0,0){\line(6,1){30}}\put(30,5){\line(6,-1){30}}\end{picture}}%
{d^{n+j}f\left(  0\right)  a^{j}}\right \Vert_{\Theta}
\leq {\displaystyle\sum\limits_{j=0}^{\infty}} \dfrac{n!\cdot
\sigma^{n+j}}{(n+j)!}\left\Vert \widehat{d}^{n+j}f\left( 0\right)
\right\Vert _{\Theta }\left\Vert a\right\Vert
^{j}%
\]}%
and%
{\small
\begin{gather*}
\left(  \frac{n}{ke}\right)  ^{\frac{n}{k}}\frac{1}{n!}\left\Vert
\widehat {d}^{n}\left(  \tau_{-a}f\right)  \left(  0\right)
\right\Vert _{\Theta  }\leq%
{\displaystyle\sum\limits_{j=0}^{\infty}}
\left(  \frac{n}{ke}\right) ^{\frac{n}{k}}\dfrac{
\sigma^{n+j}}{(n+j)!}\left\Vert
\widehat{d}^{n+j}f\left( 0\right)  \right\Vert _{\Theta  }\left\Vert a\right\Vert ^{j}\\
= {\displaystyle\sum\limits_{j=0}^{\infty}} \left(
\frac{n}{ke}\right)  ^{\frac{n}{k}}\left( \frac{ke}{n+j}\right)
^{\frac{n+j}{k}}\left(  \frac{n+j}%
{ke}\right)  ^{\frac{n+j}{k}}\frac{\sigma^{n+j}}{\left( n+j\right)
!}\left\Vert \widehat{d}^{n+j}f\left(  0\right) \right\Vert
_{\Theta  }\left\Vert a\right\Vert ^{j}\\
\leq {\displaystyle\sum\limits_{j=0}^{\infty}} \left(
\frac{ke}{j}\right)  ^{\frac{j}{k}}\sigma^{n+j}\left\Vert
a\right\Vert ^{j}\left(  \frac{n+j}{ke}\right)
^{\frac{n+j}{k}}\frac{1}{\left( n+j\right)  !}\left\Vert
\widehat{d}^{n+j}f\left(  0\right) \right\Vert _{\Theta  }.
\end{gather*}
}%
\newline Since $\lim\limits_{j\rightarrow\infty}\left(  \frac{ke}{j}\right)  ^{\frac{1}{k}%
}=0,$ for each $\varepsilon>0$ there is $D\left(  \varepsilon\right)  >0$ such that%
{\small
\begin{equation*}
\left(  \frac{ke}{j}\right)  ^{\frac{j}{k}}\leq D\left(
\varepsilon\right)
\varepsilon^{j}, \label{depsilon}%
\end{equation*}
}%
for all  $j\in\mathbb{N}$. Considering $\varepsilon>0$ such that
$\sigma\varepsilon^2\left\Vert a\right\Vert
 <1$ and using (\ref{epsilon+limsup}), we obtain
{\small
\[\left(  \frac{n}{ke}\right)  ^{\frac{n}{k}}\frac{1}{n!}\left\Vert
\widehat {d}^{n}\left(  \tau_{-a}f\right)  \left(  0\right)
\right\Vert _{\Theta}\leq C\left( \varepsilon\right)
D\left(  \varepsilon\right)  \sigma^{n}\varepsilon^{n}%
{\displaystyle\sum\limits_{j=0}^{\infty}}
\sigma^{j}\varepsilon^{2j}\left\Vert a\right\Vert
^{j}
=C\left(  \varepsilon\right)  D\left(  \varepsilon\right)
\sigma^{n}\varepsilon^{n}\frac{1}{1-\sigma\varepsilon
^2\left\Vert a\right\Vert}.
\]
}%
Hence {\small
\begin{equation*}
\limsup\limits_{n\rightarrow\infty}\left(  \frac{n}{ke}\right)  ^{\frac{1}{k}%
}\left\Vert \frac{\widehat{d}^{n}\left(  \tau_{-a}f\right)  \left(
0\right)
}{n!}\right\Vert _{\Theta  }%
^{\frac{1}{n}}=0, \label{limsuptrasl}%
\end{equation*}
}%
and so
$\tau_{-a}f\in Exp_{\Theta,0}^{k}\left(  E\right)$. To prove the convergence, let $f\in Exp_{\Theta
,0}^{k}\left(  E\right)  $ and $\rho>0$. Then

{\small
\begin{gather*}
\left\Vert \tau_{-a}f-%
{\displaystyle\sum\limits_{n=0}^{v}}
\frac{1}{n!}\widehat{d}^{n}f\left(  \cdot\right)  a\right\Vert
_{\Theta  ,k,\rho}\leq%
{\displaystyle\sum\limits_{j=0}^{\infty}} \rho^{-j}\left(
\frac{j}{ke}\right)  ^{\frac{j}{k}}
{\displaystyle\sum\limits_{n=v+1}^{\infty}}
\frac{1}{j!n!}\left\Vert \widehat{d}^{j}\left(
\widehat{d}^{n}f\left( \cdot\right)  a\right)  \left(  0\right)
\right\Vert _{\Theta  }\\
\leq%
{\displaystyle\sum\limits_{j=0}^{\infty}} \rho^{-j}\left(
\frac{j}{ke}\right)  ^{\frac{j}{k}}
{\displaystyle\sum\limits_{n=v+1}^{\infty}}
\frac{\sigma^{n+j}}{(n+j)!}\left\Vert \widehat{d}^{n+j}f\left(
0\right) \right\Vert _{\Theta }\left\Vert a\right\Vert
^{n}\\
={\displaystyle\sum\limits_{j=0}^{\infty}}
{\displaystyle\sum\limits_{n=v+1}^{\infty}} \rho^{-j}\left(
\frac{ke}{n+j}\right) ^{\frac{n}{k}}\sigma^{n+j}\left(
\frac{n+j}{ke}\right)  ^{\frac{n+j}{k}}\left\Vert
\frac{\widehat{d}^{n+j}f\left(  0\right)  }{\left(  n+j\right)
!}\right\Vert
_{\Theta  }\left\Vert a\right\Vert ^{n}\\
\leq C\left(  \varepsilon\right)  D\left(  \varepsilon\right)
{\displaystyle\sum\limits_{j=0}^{\infty}}
{\displaystyle\sum\limits_{n=v+1}^{\infty}}
\rho^{-j}\varepsilon^{n}\varepsilon^{n+j}\sigma^{n+j}\left\Vert a\right\Vert ^{n}\\
\leq C\left(  \varepsilon\right)  D\left(  \varepsilon\right)
{\displaystyle\sum\limits_{j=0}^{\infty}}\rho^{-j}\varepsilon^j\sigma^j
{\displaystyle\sum\limits_{n=v+1}^{\infty}}
\varepsilon^{2n}\sigma^{n}\left\Vert a\right\Vert ^{n}.
\end{gather*}
}

Now, if for each
$\rho>0$ we choose $\varepsilon>0$  such that
$\varepsilon < \frac{\rho}{\sigma}$ and $\varepsilon^ 2\sigma\left\Vert
a\right\Vert  <1,$ then
we obtain
{\small
\[
\lim\limits_{v\rightarrow\infty}\left\Vert \tau_{-a}f-%
{\displaystyle\sum\limits_{n=0}^{v}}
\frac{1}{n!}\widehat{d}^{n}f\left(  \cdot\right)  a\right\Vert
_{\Theta ,\rho}=0,
\]}%
and the convergence follows from the definition of the
topology.
\end{proof}

Using Proposition \ref{traslacao} it is not difficult to
show the following result.

\begin{proposition}
\label{limLambdatende a zero}Let $\left(  \mathcal{P}_{\Theta}(^{j}E)\right)
_{j=0}^{\infty}$ be a holomorphy type from $E$ to $\mathbb{C}$, $k\in\left[ 1,+\infty\right] $, $f\in Exp_{\Theta  ,0}^{k}\left( E\right)  $ and $a\in
E.$ Then {\small
\[
\lim_{\lambda\rightarrow0}\lambda^{-1}\left( \tau_{-\lambda
a}f-f\right)  =\widehat{d}^{1}f\left(  \cdot\right) a,
\]}%
in the topology of $Exp_{\Theta ,0}^{k}\left(  E\right)  .$
\end{proposition}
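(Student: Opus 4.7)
I would apply Proposition \ref{traslacao} with $\lambda a$ in place of $a$: since $\widehat{d}^n f(x)$ is an $n$-homogeneous polynomial in its argument, we have $\widehat{d}^n f(x)(\lambda a) = \lambda^n\,\widehat{d}^n f(x) a$, so
\[
\tau_{-\lambda a}f \;=\; \sum_{n=0}^{\infty} \frac{1}{n!}\,\widehat{d}^n f(\cdot)(\lambda a) \;=\; \sum_{n=0}^{\infty} \frac{\lambda^n}{n!}\,\widehat{d}^n f(\cdot) a
\]
in the topology of $Exp_{\Theta,0}^{k}(E)$. Peeling off the $n=0$ term (which is $f$) and the $n=1$ term (which is $\lambda\,\widehat{d}^1 f(\cdot) a$) gives, for $\lambda\neq 0$,
\[
\lambda^{-1}\bigl(\tau_{-\lambda a}f - f\bigr) - \widehat{d}^1 f(\cdot) a \;=\; \sum_{n=2}^{\infty} \frac{\lambda^{n-1}}{n!}\,\widehat{d}^n f(\cdot) a,
\]
so the task reduces to estimating this tail in each defining seminorm $\|\cdot\|_{\Theta,k,\rho}$, $\rho>0$.

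\smallskip

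\noindent The natural approach is to reuse, almost verbatim, the double-sum estimate appearing at the end of the proof of Proposition \ref{traslacao}, but now with $\lambda a$ in place of $a$ and truncation index $v=1$. Concretely, that computation yields
\[
\bigl\|\tau_{-\lambda a}f - f - \lambda\,\widehat{d}^1 f(\cdot) a\bigr\|_{\Theta,k,\rho} \;\leq\; C(\varepsilon)\,D(\varepsilon)\sum_{j=0}^{\infty}\rho^{-j}(\varepsilon\sigma)^{j}\sum_{n=2}^{\infty}\bigl(\varepsilon^{2}\sigma|\lambda|\,\|a\|\bigr)^{n},
\]
valid as soon as $\varepsilon>0$ is chosen so that $\rho^{-1}\varepsilon\sigma<1$ (to make the outer sum converge, exactly as in the cited proof) and $\varepsilon^{2}\sigma\|a\|<1$ (to handle the inner geometric tail for all $|\lambda|\leq 1$). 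The inner sum is bounded by a constant times $\bigl(\varepsilon^{2}\sigma|\lambda|\,\|a\|\bigr)^{2}$, so after dividing by $|\lambda|$ one obtains
\[
\bigl\|\lambda^{-1}(\tau_{-\lambda a}f - f) - \widehat{d}^1 f(\cdot) a\bigr\|_{\Theta,k,\rho} \;=\; O(|\lambda|),\qquad \lambda\to 0,
\]
which gives convergence to $0$ in every defining seminorm, hence in the Fr\'echet topology of $Exp_{\Theta,0}^{k}(E)$.

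\smallskip

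\noindent\textbf{Main obstacle.} The one delicate point is to choose the auxiliary $\varepsilon$ \emph{uniformly in $\lambda$} so that both geometric series converge simultaneously and a clean factor $|\lambda|^{2}$ can be extracted from the inner tail; only then does exactly one power of $|\lambda|$ survive the division by $\lambda$. All constants $C(\varepsilon),D(\varepsilon)$ coming from estimates of the Taylor coefficients of $f$ (as in the proof of Proposition \ref{traslacao}) are independent of $\lambda$, so this is a matter of bookkeeping rather than a new estimate. The case $k=+\infty$ is handled in parallel with the corresponding $\infty$-order seminorm (and for $A=0$ essentially reduces to \cite[Proposition 3.1]{favaro1}).
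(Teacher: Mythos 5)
Your proposal is correct and follows exactly the route the paper intends: the paper offers no written proof, stating only that the result follows from Proposition \ref{traslacao}, and your argument --- applying that proposition with $\lambda a$ in place of $a$, using $n$-homogeneity to extract $\lambda^{n}$, peeling off the $n=0,1$ terms, and rerunning the tail estimate with $v=1$ to gain a factor $|\lambda|^{2}$ before dividing by $|\lambda|$ --- is the natural way to fill that in. The uniform-in-$\lambda$ choice of $\varepsilon$ (for $|\lambda|\leq 1$) that you flag is indeed the only point needing care, and you handle it correctly.
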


\begin{theorem}\label{teorema_translacao}Let $\left(  \mathcal{P}_{\Theta}(^{j}E)\right)
_{j=0}^{\infty}$ be a holomorphy type from $E$ to $\mathbb{C}$, $k\in\left[  1,+\infty\right]  $ and $L$ be a continuous linear mapping from 
$Exp_{\Theta,0}^{k}\left(
E\right)  $ into itself. Then $L$ is a convolution
operator if, and only if, $L\left( \tau_{a}f\right)
=\tau_{a}\left(  Lf\right)  $ for all $a\in E$ and
$f\in Exp_{\Theta
,0}^{k}\left(  E\right) .$
\end{theorem}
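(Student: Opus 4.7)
The plan is to prove both implications using the two convergence results (Propositions \ref{traslacao} and \ref{limLambdatende a zero}) that were established precisely for this purpose. I will use the already stated fact that, by iterating the defining identity inductively, any convolution operator commutes with all higher-order directional derivatives: $L(\widehat{d}^{n}f(\cdot)a)=\widehat{d}^{n}(Lf)(\cdot)a$ for every $n\in\mathbb{N}_{0}$, $a\in E$ and $f\in Exp_{\Theta,0}^{k}(E)$.

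For the forward implication, suppose $L$ is a convolution operator. Fix $a\in E$ and $f\in Exp_{\Theta,0}^{k}(E)$. By Proposition \ref{traslacao},
\[
\tau_{-a}f=\sum_{n=0}^{\infty}\frac{1}{n!}\widehat{d}^{n}f(\cdot)a
\]
with convergence in the topology of $Exp_{\Theta,0}^{k}(E)$. Since $L$ is linear and continuous on $Exp_{\Theta,0}^{k}(E)$, I may apply $L$ term-by-term and then use the commutativity with each $\widehat{d}^{n}$ to obtain
\[
L(\tau_{-a}f)=\sum_{n=0}^{\infty}\frac{1}{n!}L(\widehat{d}^{n}f(\cdot)a)=\sum_{n=0}^{\infty}\frac{1}{n!}\widehat{d}^{n}(Lf)(\cdot)a=\tau_{-a}(Lf),
\]
where the last equality is again Proposition \ref{traslacao} applied to $Lf$. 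Since $a\in E$ was arbitrary, the same identity holds with $-a$ in place of $a$, giving $L(\tau_{a}f)=\tau_{a}(Lf)$.

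For the converse, suppose $L(\tau_{a}f)=\tau_{a}(Lf)$ for all $a\in E$ and $f\in Exp_{\Theta,0}^{k}(E)$. By Proposition \ref{limLambdatende a zero},
\[
\widehat{d}^{1}f(\cdot)a=\lim_{\lambda\to 0}\lambda^{-1}(\tau_{-\lambda a}f-f)
\]
in the topology of $Exp_{\Theta,0}^{k}(E)$. Applying the continuous linear map $L$ and using the commutativity with translations (with $-\lambda a$ in place of $a$) yields
\[
L(\widehat{d}^{1}f(\cdot)a)=\lim_{\lambda\to 0}\lambda^{-1}\bigl(L(\tau_{-\lambda a}f)-Lf\bigr)=\lim_{\lambda\to 0}\lambda^{-1}\bigl(\tau_{-\lambda a}(Lf)-Lf\bigr)=\widehat{d}^{1}(Lf)(\cdot)a,
\]
where the final equality is Proposition \ref{limLambdatende a zero} applied to $Lf$. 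This is exactly the defining condition in Definition \ref{defOperConv}, so $L$ is a convolution operator.

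There is no serious obstacle here: both directions are short once Propositions \ref{traslacao} and \ref{limLambdatende a zero} are available. The only point requiring any care is to use continuity of $L$ to pass $L$ through the infinite series in the forward direction, and to make sure the convergence in Proposition \ref{limLambdatende a zero} takes place in the locally convex topology of $Exp_{\Theta,0}^{k}(E)$ (which it does, by that proposition itself), so that the continuous image under $L$ may be computed by interchanging limit and $L$.
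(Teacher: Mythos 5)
Your proof is correct and follows essentially the same route as the paper's: the forward implication expands $\tau_{-a}f$ via Proposition \ref{traslacao}, pushes the continuous linear $L$ through the series, and uses the inductively-obtained commutation with $\widehat{d}^{n}$; the converse applies $L$ to the limit in Proposition \ref{limLambdatende a zero} and uses continuity to interchange $L$ with the limit. No gaps.
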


\begin{proof}
We saw that $L\left(  \widehat{d}^{n}f\left(
\cdot\right) a\right)  =\widehat{d}^{n}\left(  Lf\right)
\left(  \cdot\right) a$ for all $n\in\mathbb{N}$ and $a\in E.$ Using this fact and Proposition
\ref{traslacao} we have%
{\small
\[
L\left(  \tau_{-a}f\right)  =%
{\displaystyle\sum\limits_{n=0}^{\infty}}
\frac{1}{n!}L\left(  \widehat{d}^{n}f\left( \cdot\right)
\left(
a\right)  \right)  =%
{\displaystyle\sum\limits_{n=0}^{\infty}}
\frac{1}{n!}\widehat{d}^{n}\left(  Lf\right)  \left(
\cdot\right) a=\tau_{-a}\left(  Lf\right)  ,
\]}%
which implies $L\left(  \tau_{a}f\right) =\tau_{a}\left(
Lf\right)  .$ Now suppose that $L$ satisfies $L\left(  \tau_{a}f\right) =\tau_{a}\left(
Lf\right) $
for all $a\in E.$ Thus it follows from Proposition \ref{limLambdatende a zero} that%
{\small
\begin{gather*}
\widehat{d}^{1}\left(  Lf\right)  \left(  \cdot\right)
a=\lim_{\lambda\rightarrow0}\lambda^{-1}\left(  \tau_{-\lambda
a}\left(
Lf\right)  -Lf\right)  =\lim_{\lambda\rightarrow0}%
\lambda^{-1}\left(  L\left(  \tau_{-\lambda a}f\right)
-Lf\right) \\
=\lim_{\lambda\rightarrow0}L\left(  \lambda^{-1}\left(
\tau_{-\lambda a}f-f\right)  \right)  =L\left(
\lim_{\lambda \rightarrow0}\lambda^{-1}\left(  \tau_{-\lambda
a}f-f\right)  \right) =L\left(  \widehat{d}^{1}f\left(
\cdot\right)  a\right)  .
\end{gather*}
}%
Hence $L$ is a convolution operator.
\end{proof}

Now we are interested to provide a description of all convolution operators  on $Exp_{\Theta ,0}^{k}\left(
E\right)  .$ To do this, we need to introduce the convolution product.

\begin{definition}\rm
Let $\left(  \mathcal{P}_{\Theta}(^{j}E)\right)
_{j=0}^{\infty}$ be a holomorphy type from $E$ to $\mathbb{C}$, $k\in\left[
1,+\infty\right]  ,$ $T\in\left[ Exp_{\Theta  ,0}^{k}\left( E\right)  \right]  ^{\prime}$
and $f\in Exp_{\Theta
,0}^{k}\left(  E\right)  $. We define \emph{the convolution
product of }$T$ \emph{and} $f$ by $\left(  T\ast f\right)  \left(
x\right)  =T\left( \tau_{-x}f\right)  ,$ for all $x\in E.$
\end{definition}

We will prove that all
convolution operators are of the form $T\ast,$ but 
to prove that $T\ast$ defines a convolution operator on
$Exp_{\Theta ,0}^{k}\left(
E\right)  ,$ for $k\in\left[ 1,+\infty\right]  ,$ we need the following definition, which is a generalization of the concept of $\pi_2$-holomorphy type \cite[Definition 2.5]{favaro3}. The concept of  $\pi_2$-holomorphy type was used in \cite{favaro3} to describe convolution operators on $\mathcal{H}_{\Theta b}(E)$.

\begin{definition}\rm\label{pi2k}Let $k\in\left[  1,+\infty\right]  $ and $A\in\left[  0,+\infty\right)  $. A holomorphy type $(\mathcal{P}_{\Theta}(^{j}E))_{j=0}^{\infty}$ from $E$ to $\mathbb{C}$ is said to be a
\emph{$\pi_{2,k}$-holomorphy type} if,  for each $T\in\left[ Exp_{\Theta, 0,A}^{k}\left(  E\right) \right]  ^{\prime}$, the following conditions hold:

\begin{enumerate}
\item[(1)] For $j\in \mathbb{N}_{0}$ and $m \in \mathbb{N}_{0}$, $m\leq j,$ if
$P\in\mathcal{P}_{\Theta
}\left(  ^{j}E\right)  $ with $B\in\mathcal{L}\left(  ^{j}E\right)  $ such that
$P=\hat{B},$ then the
polynomial
{\small
\begin{align*}
T\left(  \widehat{B\cdot^{m}}\right)  \colon E  &  \longrightarrow\mathbb{C}\\
y  &  \longmapsto T\left(  B\cdot^{m}y^{j-m}\right)
\end{align*}
}%
belongs to $\mathcal{P}_{\Theta
}\left(  ^{j-m}E\right)$;
\item[(2)]For constants $C> 0$ and $\rho>A$ such that
{\small
\[
\left\vert T\left(  f\right)  \right\vert \leq C
\left\Vert f\right\Vert _{\Theta ,k,\rho}, \quad\text{if}\; k\in [1,+\infty),
\]}
{\small
\[
\left\vert T\left(  f\right)  \right\vert \leq Cp_{\Theta
,\rho}^{\infty}\left(  f\right)  ,\quad\text{if }k=+\infty,
\]}

for all $f\in Exp_{\Theta,0,A}^{k}\left(  E\right)$,  there is a constant $M > 0$ such that

{\small
\[
\left\Vert T\left(  \widehat{A\cdot^{m}}\right)  \right\Vert
_{\Theta  }\leq C M^ j\rho^{-m}\left(  \frac{m}{ke}\right)
^{\frac{m}{k}}\left\Vert P\right\Vert _{\Theta  },\quad\text{if}\; k\in [1,+\infty),
\]}
{\small
\[
\left\Vert T\left(  \widehat{A\cdot^{m}}\right)  \right\Vert
_{\Theta  }\leq CM^j\rho^{-m}\left\Vert P\right\Vert _{\Theta  },\quad\text{if}\; k=+\infty.
\]}
for every $P\in\mathcal{P}_{\Theta
}\left(  ^{j}E\right)  $,  $j\in \mathbb{N}_{0}$ and $m \in \mathbb{N}_{0}$, $m\leq j$.

\end{enumerate}

\end{definition}

\begin{remark}\rm
\begin{enumerate}
\item[(i)] Note that the constants $C$ and $\rho$ of Definition \ref{pi2k} (2) exist because $T\in\left[ Exp_{\Theta, 0,A}^{k}\left(  E\right) \right]  ^{\prime}$.

\item[(ii)] When $k=+\infty$ and $A=0$ the concepts of $\pi_{2,\infty}$-holomorphy type and $\pi_2$-holomorphy (see \cite[Definition 2.5]{favaro3}) type coincide. So in this case we write $\pi_{2,\infty}=\pi_{2}$.
\end{enumerate}
\end{remark}

\begin{theorem}\label{T*}Let $k\in\left[
1,+\infty\right] $ and $(\mathcal{P}_{\Theta}(^{j}E))_{j=0}^{\infty}$ be a $\pi_{2,k}$-holomorphy type from $E$ to $\mathbb{C}$. If 
$T\in\left[ Exp_{\Theta
,0}^{k}\left(  E\right) \right]  ^{\prime}$ and $f\in
Exp_{\Theta  ,0}^{k}\left(
E\right)  ,$ then $T\ast f\in Exp_{\Theta  ,0}^{k}\left(  E\right)  $ and $T\ast
\in\mathcal{A}_{\Theta  ,0}^{k}%
.$
\end{theorem}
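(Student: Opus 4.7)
The proof will proceed by expanding $T \ast f$ into a double power series in $x$, using the $\pi_{2,k}$-hypothesis to identify and bound the Taylor coefficients, and finally verifying the translation-commutation that yields a convolution operator via Theorem \ref{teorema_translacao}.

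\textbf{Power series expansion.} Start from $(T\ast f)(x) = T(\tau_{-x}f)$. Proposition \ref{traslacao} gives
$\tau_{-x}f = \sum_{n=0}^\infty \frac{1}{n!}\hat{d}^n f(\cdot)x$
convergent in $Exp_{\Theta,0}^k(E)$, so continuity of $T$ yields $(T\ast f)(x) = \sum_n \frac{1}{n!}T(\hat{d}^n f(\cdot)x)$. Next, Proposition \ref{derivadas} (applied with $A=0$, hence $\sigma A=0$) gives $\hat{d}^n f(\cdot)x = \sum_{m=0}^\infty \frac{1}{m!}\Hhat{d^{n+m}f(0)x^n}$ convergent in $Exp_{\Theta,0}^k(E)$, and applying $T$ once more produces
$$T(\hat{d}^n f(\cdot)x) = \sum_{m=0}^\infty \frac{1}{m!}T\bigl(\Hhat{d^{n+m}f(0)x^n}\bigr).$$
Condition (1) of Definition \ref{pi2k} (with the ``$j$'' of the definition equal to $n+m$ and its ``$m$'' equal to ours, so $j-m=n$) ensures that $Q_{n,m}(x):=T(\Hhat{d^{n+m}f(0)x^n})$ lies in $\mathcal{P}_\Theta(^nE)$. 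Thus $T\ast f$ admits a formal Taylor expansion with $\frac{1}{n!}\hat{d}^n(T\ast f)(0)=\sum_{m=0}^\infty \frac{1}{n!m!}Q_{n,m}\in\mathcal{P}_\Theta(^nE)$.

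\textbf{Norm estimates.} By continuity of $T$, choose $C>0$ and $\rho>0$ with $|T(g)|\leq C\|g\|_{\Theta,k,\rho}$ (respectively $|T(g)|\leq Cp_{\Theta,\rho}^\infty(g)$ if $k=+\infty$). Condition (2) of Definition \ref{pi2k} then furnishes $M>0$ with
$$\|Q_{n,m}\|_\Theta \leq CM^{n+m}\rho^{-m}(m/(ke))^{m/k}\|\hat{d}^{n+m}f(0)\|_\Theta$$
(and analogously for $k=+\infty$, without the $(m/(ke))^{m/k}$ factor). Fix $r>0$, set $N=n+m$, and use $n^n(N-n)^{N-n}\leq N^N$ (so that $(n/(ke))^{n/k}((N-n)/(ke))^{(N-n)/k}\leq (N/(ke))^{N/k}$) together with the binomial identity $\sum_{n=0}^N \binom{N}{n}r^{-n}\rho^{-(N-n)}=(r^{-1}+\rho^{-1})^N$ to obtain
$$\|T\ast f\|_{\Theta,k,r} \;\leq\; C\sum_{N=0}^\infty \bigl[M(r^{-1}+\rho^{-1})\bigr]^{N}(N/(ke))^{N/k}\bigl\|\hat{d}^N f(0)/N!\bigr\|_\Theta \;=\; C\|f\|_{\Theta,k,r_0},$$
where $r_0=[M(r^{-1}+\rho^{-1})]^{-1}$. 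Since $f\in Exp_{\Theta,0}^k(E)=\bigcap_{\rho'>0}\mathcal{B}_{\Theta,\rho'}^k(E)$, the right-hand side is finite for every $r>0$, so $T\ast f\in Exp_{\Theta,0}^k(E)$. The same estimate shows $T\ast$ is continuous on $Exp_{\Theta,0}^k(E)$. The case $k=+\infty$ runs in parallel with the seminorms $p_{\Theta,\rho}^\infty$.

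\textbf{Translation invariance and conclusion.} For any $a\in E$,
$$(T\ast(\tau_{-a}f))(x)=T(\tau_{-x}\tau_{-a}f)=T(\tau_{-(x+a)}f)=(T\ast f)(x+a)=\tau_{-a}(T\ast f)(x),$$
so $T\ast$ commutes with every translation. Theorem \ref{teorema_translacao} therefore identifies $T\ast$ as a convolution operator, i.e.\ $T\ast\in\mathcal{A}_{\Theta,0}^k$.

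The main obstacle is the step of matching the two hats in $\Hhat{d^{n+m}f(0)x^n}$ to the setting of Definition \ref{pi2k} so that condition (1) really gives $Q_{n,m}\in\mathcal{P}_\Theta(^nE)$ and condition (2) delivers the bound in the form needed above; once the identification is made and the bookkeeping in $N=n+m$ is carried out, the rest is a clean binomial-sum calculation that reduces $\|T\ast f\|_{\Theta,k,r}$ to a single seminorm $\|f\|_{\Theta,k,r_0}$ of $f$.
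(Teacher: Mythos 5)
Your proposal is correct and follows essentially the same route as the paper: expand $T\ast f$ via Propositions \ref{traslacao} and \ref{derivadas}, apply conditions (1) and (2) of Definition \ref{pi2k} to the resulting homogeneous pieces $T\bigl(\widehat{d^{n+m}f(0)\cdot^{m}}\bigr)$, and conclude by continuity together with commutation with translations (Theorem \ref{teorema_translacao}). The only difference is organizational: you bound $\|T\ast f\|_{\Theta,k,r}$ in one shot by regrouping the double sum over $N=n+m$ with the binomial theorem and the inequality $n^{n}(N-n)^{N-n}\le N^{N}$, whereas the paper estimates each coefficient $P_{n}$ separately (via $\binom{j+n}{n}\le D(\varepsilon)(1+\varepsilon)^{j+n}$) and then invokes the $\limsup$ characterization of Proposition \ref{caracterizacao sm(r,q)}; both yield the same membership and continuity conclusions.
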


\begin{proof}
The linearity of $T\ast$ is clear. By Propositions \ref{dnfa} and
\ref{traslacao} we have that%
{\small
\[
\left(  T\ast f\right)  \left(  x\right)  =T\left(  \tau_{-x}f\right)  =%
{\displaystyle\sum\limits_{n=0}^{\infty}}
\frac{1}{n!}T\left(  \widehat{d}^{n}f\left(  \cdot\right)  \left(
x\right)
\right)  =%
{\displaystyle\sum\limits_{n=0}^{\infty}}
\frac{1}{n!}%
{\displaystyle\sum\limits_{j=0}^{\infty}}
\frac{1}{j!}T\left(  \overset
{\begin{picture}(60,5)\put(0,0){\line(6,1){30}}\put(30,5){\line(6,-1){30}}\end{picture}}%
{d^{j+n}f\left(  0\right)  \cdot^{j}}\left(  x\right)  \right)  .
\]}%
Let $f\in
Exp_{\Theta  ,0}^{k}\left(
E\right)$ and $T\in\left[ Exp_{\Theta
,0}^{k}\left(  E\right) \right]  ^{\prime}$. Since $\Theta$ is a $\pi_{2,k}$-holomorphy type we have that $T\left(
\overset
{\begin{picture}(60,5)\put(0,0){\line(6,1){30}}\put(30,5){\line(6,-1){30}}\end{picture}}%
{d^{j+n}f\left(  0\right)  \cdot^{j}}\right)\in\mathcal{P}%
_{\Theta  }\left(
^{n}E\right)  $
and
{\small
\[
\left\Vert T\left(  \overset
{\begin{picture}(60,5)\put(0,0){\line(6,1){30}}\put(30,5){\line(6,-1){30}}\end{picture}}
{d^{j+n}f\left(  0\right)  \cdot^{j}}\right)  \right\Vert
_{\Theta  }\leq
CM^{j+n}\rho^{-j}\left(  \frac{j}{ke}\right) ^{\frac{j}{k}}\left\Vert
\widehat{d}^{j+n}f\left(  0\right)  \right\Vert _{\Theta  },
\]}
for  $k\in\left[ 1,+\infty\right)  ,$ and {\small
\[
\left\Vert T\left(  \overset
{\begin{picture}(60,5)\put(0,0){\line(6,1){30}}\put(30,5){\line(6,-1){30}}\end{picture}}%
{d^{j+n}f\left(  0\right)  \cdot^{j}}\right)  \right\Vert
_{\Theta  }\leq CM^{j+n}\rho^{-j}\left\Vert \widehat{d}%
^{j+n}f\left(  0\right)  \right\Vert _{\Theta  },
\]}%
for $k=+\infty,$ where $C$, $\rho$ and $M$ are as in Definition
\ref{pi2k}. If $k\in\left[
1,+\infty\right)  $ and $0<\rho^{\prime}<\rho,$ then

{\small
\begin{gather}%
{\displaystyle\sum\limits_{j=0}^{\infty}}
\frac{1}{j!}\left\Vert T\left(  \overset
{\begin{picture}(60,5)\put(0,0){\line(6,1){30}}\put(30,5){\line(6,-1){30}}\end{picture}}%
{d^{j+n}f\left(  0\right)  \cdot^{j}}\right)  \right\Vert
_{\Theta  }\leq C
{\displaystyle\sum\limits_{j=0}^{\infty}}
\frac{1}{j!}M^{j+n}(\rho^{\prime})^{-j}\left(  \frac{j}{ke}\right)
^{\frac{j}{k}}\left\Vert \widehat{d}^{j+n}f\left(  0\right)
\right\Vert _{\Theta  }\nonumber\\
=(\rho^{\prime})^{n}C  n!%
{\displaystyle\sum\limits_{j=0}^{\infty}}
M^{j+n}\frac{\left(  j+n\right)  !}{j!n!}\left(  \frac{j}{j+n}\right)  ^{\frac{j}{k}%
}\left(  \frac{ke}{j+n}\right)  ^{\frac{n}{k}}\left(
\frac{j+n}{ke}\right)
^{\frac{j+n}{k}}(\rho^{\prime})^{-\left(  j+n\right)  }\left\Vert \frac{\widehat{d}%
^{j+n}f\left(  0\right)  }{\left(  j+n\right)  !}\right\Vert
_{\Theta  }. \label{desig3.11}
\end{gather}
}%
Since
{\small
\[
\limsup\limits_{j\rightarrow\infty}\binom{j+n}{n}^{\frac{1}{j+n}}%
=1,
\]}%
then for every $\varepsilon>0$ there is $D\left(
\varepsilon\right)  >0$ such that%
{\small
\[
\binom{j+n}{n}\leq D\left(  \varepsilon\right)  \left(
1+\varepsilon\right) ^{j+n},
\]}%
for all $j\in\mathbb{N}$. Hence
{\small
\begin{gather*}
{\displaystyle\sum\limits_{j=0}^{\infty}}
\frac{1}{j!}\left\Vert T\left(  \overset
{\begin{picture}(60,5)\put(0,0){\line(6,1){30}}\put(30,5){\line(6,-1){30}}\end{picture}}%
{d^{j+n}f\left(  0\right)  \cdot^{j}}\right)  \right\Vert
_{\Theta  } 
\\
\leq C\cdot D\left(
\varepsilon\right)  (\rho^{\prime})^{n}n!\left(  \frac{ke}{n}\right)  ^{\frac{n}{k}
}\sum\limits_{j=0}^{\infty}\left(\frac{\rho^{\prime}}{M(1+\varepsilon)}\right)^{-(j+n)}\left(
\frac{j+n}{ke}\right)
^{\frac{j+n}{k}}\left\Vert \frac{\widehat{d}%
^{j+n}f\left(  0\right)  }{\left(  j+n\right)  !}\right\Vert
_{\Theta  }.%
\\
\leq C\cdot D\left(
\varepsilon\right)  (\rho^{\prime})^{n}n!\left(  \frac{ke}{n}\right)  ^{\frac{n}{k}%
}\left\Vert f\right\Vert _{\Theta
,k,\frac{\rho^{\prime}}{M(1+\varepsilon)}}%
\end{gather*}
}

and so
{\small
\[
P_{n}=%
{\displaystyle\sum\limits_{j=0}^{\infty}}
\frac{1}{j!}T\left(  \overset
{\begin{picture}(60,5)\put(0,0){\line(6,1){30}}\put(30,5){\line(6,-1){30}}\end{picture}}%
{d^{j+n}f\left(  0\right)  \cdot^{j}}\right)
\in\mathcal{P}_{\Theta
}\left(  ^{n}E\right)  ,
\]}%
for each $n\in\mathbb{N}$ and
{\small
\[
\left\Vert P_{n}\right\Vert _{\Theta }\leq C\cdot D\left(
\varepsilon\right)  (\rho^{\prime})^{n}n!\left( \frac{ke}{n}\right)
^{\frac{n}{k}}\left\Vert f\right\Vert _{\Theta  ,k,\frac{\rho^{\prime}}{M(1+\varepsilon)}}.
\]}

Hence%
{\small
\[
\limsup\limits_{n\rightarrow\infty}\left(  \frac{n}{ke}\right)  ^{\frac{1}{k}%
}\left\Vert \frac{P_{n}}{n!}\right\Vert _{\Theta
}^{\frac{1}{n}}\leq\limsup\limits_{n\rightarrow\infty
}(C\cdot D(\varepsilon))^{\frac{1}{n}}\rho^{\prime}\left\Vert f\right\Vert
_{\Theta  ,k,\frac{\rho^{\prime}}{M(1+\varepsilon)}}^{\frac{1}{n}}%
=\rho^{\prime}.
\]}%
Since $0<\rho^{\prime}<\rho$ is arbitrary we get
{\small
\[
\limsup\limits_{n\rightarrow\infty}\left(  \frac{n}{ke}\right)  ^{\frac{1}{k}%
}\left\Vert \frac{P_{n}}{n!}\right\Vert _{\Theta
}^{\frac{1}{n}}=0.
\]}
Thus $T\ast f\in
Exp_{\Theta  ,0}^{k}\left(
E\right)  .$ For $\rho_{1}>0,$
if we choose $0<\rho^{\prime}<\rho$ and $\rho^{\prime}<\rho_{1},$ then we have%
{\small
\begin{gather*}
\left\Vert T\ast f\right\Vert _{\Theta  ,k,\rho_{1}}=%
{\displaystyle\sum\limits_{n=0}^{\infty}}
\frac{1}{n!}\left(  \frac{n}{ke}\right)
^{\frac{1}{k}}\rho_{1}^{-n}\left\Vert
P_{n}\right\Vert _{\Theta  }\\
\leq C\left\Vert f\right\Vert _{\Theta  ,k,\frac{\rho^{\prime}}{M(1+\varepsilon)}}%
{\displaystyle\sum\limits_{n=0}^{\infty}}
\left(  \frac{\rho^{\prime}}{\rho_{1}}\right)  ^{n}=C\left(
1-\frac {\rho^{\prime}}{\rho_{1}}\right)  ^{-1}\left\Vert
f\right\Vert _{\Theta
,k,\frac{\rho^{\prime}}{M(1+\varepsilon)}}.
\end{gather*}
}%
and since the topology of $Exp_{\Theta  ,0}^{k}\left(
E\right)$ is generated by the family of norms $(\|\cdot \|_{\Theta,k,\rho})_{\rho>0}$, we have that $T\ast$ is continuous. The case $k=+\infty,$ was proved in
\cite[Proposition 4.7]{favaro3}.

The fact that $T*$ commutes with
translations is clear.
\end{proof}

\begin{definition}\rm\label{gamma_k}

If $k\in\left[
1,+\infty\right] $ and $(\mathcal{P}_{\Theta}(^{j}E))_{j=0}^{\infty}$ is a $\pi_{2,k}$-holomorphy type from $E$ to $\mathbb{C}$, we
define {\small
\[
\gamma_{\Theta  ,0}^{k}%
\colon\mathcal{A}_{\Theta  ,0}%
^{k}\longrightarrow\left[  Exp_{\Theta  ,0}^{k}\left(  E\right)  \right]  ^{\prime}
\]}%
by $\gamma_{\Theta ,0}^{k}\left( L\right)  \left(
f\right)  =\left( Lf\right)  \left( 0\right)  ,$ for
$f\in Exp_{\Theta ,0}^{k}\left( E\right)  $ and
$L\in\mathcal{A}_{\Theta ,0}^{k}.$
\end{definition}

\begin{theorem}
\label{bijLinear}If $k\in\left[
1,+\infty\right] $ and $(\mathcal{P}_{\Theta}(^{j}E))_{j=0}^{\infty}$ is a $\pi_{2,k}$-holomorphy type from $E$ to $\mathbb{C}$, then the mapping $\gamma_{\Theta  ,0}^{k}$ is a linear
bijection and its inverse is the mapping 
{\small
\[\Gamma_{\Theta
,0}^{k}\colon\left[ Exp_{\Theta  ,0}^{k}\left( E\right) \right]
^{\prime}\longrightarrow\mathcal{A}_{\Theta  ,0}^{k}
\]}%
given by $\Gamma_{\Theta  ,0}%
^{k}\left(  T\right)  \left(  f\right)  =T\ast f,$ for $T\in\left[
Exp_{\Theta ,0}^{k}\left(
E\right) \right]  ^{\prime},$ $f\in Exp_{\Theta  ,0}^{k}\left( E\right)  $ and $k\in\left[
1,+\infty\right]  .$

\end{theorem}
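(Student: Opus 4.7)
The plan is to verify that $\gamma_{\Theta,0}^{k}$ and $\Gamma_{\Theta,0}^{k}$ are well-defined mappings between the claimed spaces and then show that they are two-sided inverses of each other; since both maps are manifestly linear, this will establish the theorem.

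First I would check that $\gamma_{\Theta,0}^{k}(L)$ really lies in $[Exp_{\Theta,0}^{k}(E)]'$. This is immediate: evaluation at $0$ is a continuous linear functional on $Exp_{\Theta,0}^{k}(E)$ (it is dominated, for instance, by $\|\cdot\|_{\Theta,k,\rho}$ or $p^{\infty}_{\Theta,\rho}$ since the $j=0$ term of the series defining those seminorms/norms controls $|f(0)|$), so its composition with the continuous linear $L$ is continuous. Linearity of $\gamma_{\Theta,0}^{k}$ in $L$ is obvious from the definition. For $\Gamma_{\Theta,0}^{k}$, Theorem \ref{T*} precisely asserts that $T\ast\in\mathcal{A}_{\Theta,0}^{k}$ whenever $T\in[Exp_{\Theta,0}^{k}(E)]'$, so $\Gamma_{\Theta,0}^{k}$ does land in $\mathcal{A}_{\Theta,0}^{k}$; linearity of $T\mapsto T\ast$ is clear from the definition of $T\ast f$.

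Next I would verify $\gamma_{\Theta,0}^{k}\circ\Gamma_{\Theta,0}^{k}=\mathrm{id}$. For $T\in[Exp_{\Theta,0}^{k}(E)]'$ and $f\in Exp_{\Theta,0}^{k}(E)$,
\[
\gamma_{\Theta,0}^{k}(\Gamma_{\Theta,0}^{k}(T))(f)=(T\ast f)(0)=T(\tau_{0}f)=T(f),
\]
since $\tau_{0}f=f$ from the definition $\tau_{-a}f(x)=f(x+a)$ with $a=0$.

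Finally I would verify $\Gamma_{\Theta,0}^{k}\circ\gamma_{\Theta,0}^{k}=\mathrm{id}$. This is the step where the structural hypotheses really enter. Fix $L\in\mathcal{A}_{\Theta,0}^{k}$ and $f\in Exp_{\Theta,0}^{k}(E)$. For every $x\in E$,
\[
\Gamma_{\Theta,0}^{k}(\gamma_{\Theta,0}^{k}(L))(f)(x)=(\gamma_{\Theta,0}^{k}(L)\ast f)(x)=\gamma_{\Theta,0}^{k}(L)(\tau_{-x}f)=\bigl(L(\tau_{-x}f)\bigr)(0).
\]
Now I invoke Theorem \ref{teorema_translacao}, which tells us that convolution operators commute with translations, so $L(\tau_{-x}f)=\tau_{-x}(Lf)$. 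Evaluating at $0$ gives $(\tau_{-x}(Lf))(0)=(Lf)(x)$, hence $\Gamma_{\Theta,0}^{k}(\gamma_{\Theta,0}^{k}(L))(f)=Lf$.

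The main obstacle is really conceptual rather than computational: one needs Theorem \ref{T*} (which relies on the $\pi_{2,k}$-holomorphy type hypothesis) to know that $\Gamma_{\Theta,0}^{k}$ lands in $\mathcal{A}_{\Theta,0}^{k}$, and Theorem \ref{teorema_translacao} to convert the defining condition of a convolution operator (commutation with directional derivatives) into commutation with translations so that the inversion identity $\Gamma_{\Theta,0}^{k}\circ\gamma_{\Theta,0}^{k}=\mathrm{id}$ comes out. Once those two ingredients are in hand, the verification of the two compositional identities is just a one-line substitution.
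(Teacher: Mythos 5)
Your proof is correct. The paper actually states Theorem \ref{bijLinear} without giving a proof (it is left as routine, following the pattern of the corresponding result for $\mathcal{H}_{\Theta b}(E)$ in \cite{favaro3}), and your argument is exactly the standard one that fills this in: Theorem \ref{T*} for well-definedness of $\Gamma_{\Theta,0}^{k}$, the identity $(T\ast f)(0)=T(f)$ for one composition, and Theorem \ref{teorema_translacao} (commutation with translations) for the other.
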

 
As a last result presented on this section we will prove that the Fourier-Borel transform becomes an isomorphism of algebras. We introduce the following product on  $\left[ Exp_{\Theta  ,0}^{k}\left(  E\right)
\right]  ^{\prime}$.

\begin{definition}
\rm\label{produtoConvolucao} Let $k\in\left[
1,+\infty\right] $, $(\mathcal{P}_{\Theta}(^{j}E))_{j=0}^{\infty}$ a $\pi_{2,k}$-holomorphy type from $E$ to $\mathbb{C}$ and $T_{1},T_{2}\in\left[ Exp_{\Theta ,0}^{k}\left( E\right)  \right]
^{\prime}$. We define the \emph{convolution product} $T_{1} \ast T_{2}\in \left[ Exp_{\Theta ,0}^{k}\left(  E\right)
\right]
^{\prime}$ by%
{\small
\[
T_{1}\ast T_{2}=\gamma_{\Theta ,0}^{k}\left(  L_{1}\circ L_{2}\right)
\in\left[ Exp_{\Theta
,0}^{k}\left(  E\right) \right]  ^{\prime},
\]}%
where $L_{1}=T_{1}\ast$ and
$L_{2}=T_{2}\ast.$
\end{definition}

It is easy to check that $\gamma_{\Theta  ,0}^{k}$ preserves product, that
is, $$\gamma_{\Theta ,0}^{k}\left(  L_{1}\circ L_{2}\right)
=\left( \gamma_{\Theta
,0}^{k}L_{1}\right)  \ast\left(  \gamma
_{\Theta  ,0}^{k}L%
_{2}\right)  .$$ With this product $\left[  Exp_{\Theta
,0}^{k}\left(  E\right)  \right]  ^{\prime}$ becomes an algebra
with unit element $\delta\colon Exp_{\Theta
,0}^{k}\left(  E\right) \rightarrow \mathbb{C}$ given by $\delta\left(  f\right)
=f\left(  0\right)  $, for all $f\in Exp_{\Theta
,0}^{k}\left(  E\right)$.

%
%
%

\begin{theorem}
\label{isomorfismoDeAlgebras}If $k\in\left(
1,+\infty\right] $ and $(\mathcal{P}_{\Theta}(^{j}E))_{j=0}^{\infty}$ is a $\pi_{1}$-$\pi_{2,k}$-holomorphy type from $E$ to $\mathbb{C}$, then the Fourier-Borel transform $\mathcal{F}$ is an 
isomorphism between the algebras $\left[ Exp_{\Theta  ,0}^{k}\left(  E\right)
\right]  ^{\prime}$ and $Exp_{\Theta^{\prime},\infty}^{k^{\prime}}\left( E^{\prime}\right)
=Exp_{\Theta^{\prime}}^{k^{\prime}}\left( E^{\prime}\right)
.$
\end{theorem}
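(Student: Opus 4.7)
By Theorem \ref{fourier_borel} applied with $A=0$, we already know that $\mathcal{F}\colon [Exp_{\Theta,0}^{k}(E)]' \longrightarrow Exp_{\Theta^{\prime},\infty}^{k'}(E^{\prime})$ is a bijective linear map. The remaining task is to prove that it is multiplicative, that is,
\[
\mathcal{F}(T_{1}\ast T_{2})(\varphi) = \mathcal{F}(T_{1})(\varphi)\cdot \mathcal{F}(T_{2})(\varphi) \qquad \text{for all } \varphi\in E^{\prime}.
\]
Once this identity is established, the surjectivity of $\mathcal{F}$ immediately implies that $Exp_{\Theta^{\prime},\infty}^{k'}(E^{\prime})$ is closed under pointwise multiplication (any $f,g$ in it are $\mathcal{F}T_{1},\mathcal{F}T_{2}$ for some $T_{1},T_{2}$, so $fg = \mathcal{F}(T_{1}\ast T_{2})$), and we obtain the claimed algebra isomorphism with pointwise product as the multiplication on the right-hand side.

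The central observation is the translation law for exponentials: for any $\varphi\in E^{\prime}$ and any $x\in E$,
\[
\tau_{-x}e^{\varphi}(y) = e^{\varphi(y+x)} = e^{\varphi(x)}\,e^{\varphi}(y),
\]
so $\tau_{-x}e^{\varphi} = e^{\varphi(x)}\,e^{\varphi}$ as elements of $Exp_{\Theta,0}^{k}(E)$ (the membership $e^{\varphi}\in Exp_{\Theta,0}^{k}(E)$ follows from the $\pi_{1}$ condition, as already implicit in Theorem \ref{fourier_borel}). Plugging this into the definition of the convolution product of a functional with a function yields
\[
(T_{2}\ast e^{\varphi})(x) = T_{2}(\tau_{-x}e^{\varphi}) = e^{\varphi(x)}\,T_{2}(e^{\varphi}) = \mathcal{F}(T_{2})(\varphi)\cdot e^{\varphi}(x),
\]
i.e.\ $T_{2}\ast e^{\varphi} = \mathcal{F}(T_{2})(\varphi)\cdot e^{\varphi}$ inside $Exp_{\Theta,0}^{k}(E)$ (which is a legitimate element by Theorem \ref{T*}).

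Now I unravel $\mathcal{F}(T_{1}\ast T_{2})(\varphi)$. By Definitions \ref{gamma_k} and \ref{produtoConvolucao}, if $L_{i}=T_{i}\ast$, then
\[
(T_{1}\ast T_{2})(e^{\varphi}) = \gamma_{\Theta,0}^{k}(L_{1}\circ L_{2})(e^{\varphi}) = \bigl(L_{1}(L_{2}(e^{\varphi}))\bigr)(0).
\]
Using the identity above for $L_{2}(e^{\varphi})$, linearity of $L_{1}=T_{1}\ast$, and evaluating at $0$ (where $\tau_{0}=\mathrm{id}$ makes $(T_{1}\ast e^{\varphi})(0)=T_{1}(e^{\varphi})=\mathcal{F}(T_{1})(\varphi)$), this becomes
\[
\mathcal{F}(T_{2})(\varphi)\cdot (T_{1}\ast e^{\varphi})(0) = \mathcal{F}(T_{1})(\varphi)\cdot \mathcal{F}(T_{2})(\varphi),
\]
which is the multiplicativity we wanted.

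The main obstacle is essentially absent here, because all the analytic work has been done in the preceding results: Theorem \ref{fourier_borel} gives the bijection, Theorem \ref{T*} (requiring the $\pi_{2,k}$ hypothesis) guarantees that $T\ast$ is a bona fide convolution operator with $T\ast e^{\varphi}\in Exp_{\Theta,0}^{k}(E)$, and the $\pi_{1}$ hypothesis ensures $e^{\varphi}\in Exp_{\Theta,0}^{k}(E)$ so that $\mathcal{F}T(\varphi)$ makes sense. The remaining argument is the purely algebraic computation above, where the only subtle point to verify carefully is the interchange of $T_{2}$ with the scalar factor $e^{\varphi(x)}$, which is justified precisely because $e^{\varphi(x)}$ is a constant in $x$ (the variable of the outer convolution) at each stage.
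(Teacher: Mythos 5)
Your proposal is correct and follows essentially the same route as the paper: the paper invokes Theorem \ref{fourier_borel} for the bijection and dismisses the multiplicativity $\mathcal{F}(T_{1}\ast T_{2})=(\mathcal{F}T_{1})\cdot(\mathcal{F}T_{2})$ as ``easy to see''. Your computation via the eigenvector identity $T\ast e^{\varphi}=\mathcal{F}(T)(\varphi)\,e^{\varphi}$ (the same identity the paper later records in Lemma \ref{lemma scalar}(a)) is exactly the intended justification of that step.
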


\begin{proof}
Since $\Theta$ is a $\pi_{1}$-holomorphy type, then it follows from Theorem \ref{fourier_borel} that $\mathcal{F}$ is an algebraic
isomorphism between these spaces. Since $\Theta$ is also a $\pi_{2,k}$-holomorphy type, then it is easy to see that
$\mathcal{F}\left( T_{1}\ast T_{2}\right)  =\left( \mathcal{F}T_{1}\right)  \cdot\left(
\mathcal{F}T_{2}\right)  .$ 
\end{proof}

\section{Hypercyclicity results}

The main result of this section is the following theorem:

\begin{theorem}\label{main1} Let $k\in\left(
1,+\infty\right] $, $E^{\prime}$ be
separable and $(\mathcal{P}_{\Theta}(^{m}E))_{m=0}^{\infty}$ be a
$\pi_1$-holomorphy type from $E$ to $\mathbb{C}$. Then every nontrivial convolution operator
on $Exp_{\Theta  ,0}^{k}\left( E\right)$ is mixing and thus in particular hypercyclic.
\end{theorem}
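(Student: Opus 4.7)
The plan is to follow the Godefroy--Shapiro spectral strategy: convert the dynamics of $L=T\ast$ into a statement about its point spectrum via the Fourier--Borel transform, find dense subspaces of eigenvectors with eigenvalues inside and outside the closed unit disk, and apply the classical hypercyclicity criterion of Kitai/Gethner--Shapiro. Since that criterion forces the operator to be mixing by \cite{costakis}, establishing the criterion will give both conclusions simultaneously. The starting observation is that for every $\varphi\in E'$ the exponential $e^{\varphi}$ lies in $Exp_{\Theta,0}^{k}(E)$: the $\pi_{1}$-bound $\|\varphi^{j}\|_{\Theta}\leq K^{j}\|\varphi\|^{j}$ together with Proposition \ref{caracterizacao sm(r,q)} reduces this to checking that $(j/ke)^{1/k}/(j!)^{1/j}\to 0$, which is true because $k>1$.

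Next I would identify the spectral data of $L$. A direct calculation using Proposition \ref{traslacao} gives
\[
(T\ast e^{\varphi})(x)=T(\tau_{-x}e^{\varphi})=T\bigl(e^{\varphi(x)}e^{\varphi}\bigr)=\mathcal{F}(T)(\varphi)\,e^{\varphi}(x),
\]
so $e^{\varphi}$ is an eigenvector of $L$ with eigenvalue $\mathcal{F}(T)(\varphi)$. Because $k\in(1,+\infty]$, Theorem \ref{fourier_borel} puts $\mathcal{F}(T)$ in $Exp_{\Theta',\infty}^{k'}(E')$, a space of entire functions on $E'$. The nontriviality of $L$ forces $\mathcal{F}(T)$ to be non-constant (if $\mathcal{F}(T)\equiv c$, then $L-cI$ vanishes on the span of the exponentials, which the density argument below shows is dense, so $L=cI$). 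Continuity then supplies nonempty open sets $U_{1},U_{2}\subset E'$ on which $|\mathcal{F}(T)(\varphi)|<1$ and $|\mathcal{F}(T)(\varphi)|>1$, respectively.

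The crucial technical step is density of $X_{i}:=\mathrm{span}\{e^{\varphi}:\varphi\in U_{i}\}$ in $Exp_{\Theta,0}^{k}(E)$. If $\Phi\in [Exp_{\Theta,0}^{k}(E)]'$ vanishes on $X_{i}$, then $\mathcal{F}\Phi(\varphi)=\Phi(e^{\varphi})=0$ for every $\varphi\in U_{i}$; the identity principle for holomorphic functions on the connected Banach space $E'$ forces $\mathcal{F}\Phi\equiv 0$, and injectivity of $\mathcal{F}$ (Theorem \ref{fourier_borel}) yields $\Phi=0$. Hahn--Banach then gives density. Separability of $Exp_{\Theta,0}^{k}(E)$, needed for hypercyclicity, follows from separability of $E'$ together with density of the finite-type polynomials in each $(\mathcal{P}_{\Theta}(^{j}E),\|\cdot\|_{\Theta})$ provided by the $\pi_{1}$-hypothesis.

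To close the argument I define $S\colon X_{2}\to X_{2}$ on the eigenbasis by $S(e^{\varphi})=\mathcal{F}(T)(\varphi)^{-1}e^{\varphi}$, which is well defined because $\mathcal{F}(T)(\varphi)\neq 0$ on $U_{2}$. Then $LS=I$ on $X_{2}$, and on the generators we have $L^{n}e^{\varphi}=\mathcal{F}(T)(\varphi)^{n}e^{\varphi}\to 0$ for $\varphi\in U_{1}$ and $S^{n}e^{\varphi}=\mathcal{F}(T)(\varphi)^{-n}e^{\varphi}\to 0$ for $\varphi\in U_{2}$. Invoking the Gethner--Shapiro hypercyclicity criterion gives hypercyclicity, and Costakis--Sambarino upgrades this to the mixing property. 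The main obstacle I expect is verifying the two convergences in \emph{every} seminorm $\|\cdot\|_{\Theta,k,\rho}$ defining the Fréchet topology (one has to pass the scalar decay through the explicit exponential-type estimates for $\|e^{\varphi}\|_{\Theta,k,\rho}$); the other nontrivial point is ensuring that a compact-open-style identity principle is strong enough to conclude $\mathcal{F}\Phi\equiv 0$ in the correct germ/inductive-limit topology of $Exp_{\Theta',\infty}^{k'}(E')$, so that injectivity of $\mathcal{F}$ may be applied.
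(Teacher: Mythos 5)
Your proposal is correct in substance and follows essentially the same route as the paper: the eigenvalue identity $L(e^{\varphi})=\mathcal{F}[\gamma_{\Theta,0}^{k}(L)](\varphi)\,e^{\varphi}$ (Lemma \ref{lemma scalar}), density of spans of exponentials over nonempty open subsets of $E'$ via Hahn--Banach and injectivity of the Fourier--Borel transform (Proposition \ref{densidade de ephi b}), and the Kitai/Gethner--Shapiro criterion in its Costakis--Sambarino mixing form (Theorem \ref{Hypercyclity criterion}). Two small points in your write-up need repair. First, ``continuity supplies nonempty open sets $U_{1},U_{2}$'' is not right as stated: continuity gives openness, but nonemptiness is exactly where non-constancy must be used, via Liouville's theorem (if $|\mathcal{F}(T)|\ge 1$ everywhere then $1/\mathcal{F}(T)$ is a bounded entire function on every complex line, hence $\mathcal{F}(T)$ is constant; if $|\mathcal{F}(T)|\le 1$ everywhere then $\mathcal{F}(T)$ itself is constant). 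Second, defining $S$ ``on the eigenbasis'' presupposes that $\{e^{\varphi}:\varphi\in U_{2}\}$ is linearly independent, so that $S$ extends unambiguously to the span; the paper proves this separately (Proposition \ref{Linearmente indep}) by applying directional differentiation operators to a putative dependence relation. Finally, the obstacle you anticipate at the end is not one: $L^{n}e^{\varphi}$ and $S^{n}e^{\varphi}$ are scalar multiples of the fixed vector $e^{\varphi}$, so geometric decay of the scalar immediately gives convergence to $0$ in every continuous seminorm of the Fr\'echet topology.
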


The proof of this result rests on the following theorem, which, as mentioned in the Introduction, is due to Costakis and Sambarino \cite{costakis} and sharpens an earlier result of Kitai \cite{kitai} and Gethner and Shapiro \cite{GS}. 

\begin{theorem}
\label{Hypercyclity criterion}Let $X$ be a separable Fr\'echet space. Then a continuous linear 
mapping $T\colon X \rightarrow X$ is mixing if there are dense subsets $Z,Y$ of $X$ and a mapping
$S\colon Y \rightarrow Y$ satisfying the following three conditions:

\textit{(a) $T^{n}(z) \rightarrow 0$ when $n \rightarrow \infty$ for every $z \in Z$.}

\textit{(b) $S^{n}(y) \rightarrow 0$ when $n \rightarrow \infty$ for every $y \in Y$.}

\textit{(c) $T \circ S (y) = y$ for every $y \in Y$.}

\end{theorem}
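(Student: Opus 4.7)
The plan is to verify the mixing property directly from its definition: given two arbitrary nonempty open sets $U, V \subset X$, we must produce an index $n_0 \in \mathbb{N}$ such that $T^n(U) \cap V \neq \emptyset$ for every $n \geq n_0$. The strategy is the classical one for the Kitai/Gethner--Shapiro criterion, namely, to construct for each large $n$ an explicit element $x_n \in U$ whose image $T^n(x_n)$ lies in $V$; the sharpening of Costakis and Sambarino comes out automatically because both relevant estimates hold \emph{for all} sufficiently large $n$, not just along a subsequence.

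First, I would use density to pick $u \in U \cap Z$ and $v \in V \cap Y$, and choose balanced open $0$-neighborhoods $U_0, V_0$ in $X$ with $u+U_0 \subset U$ and $v+V_0 \subset V$ (this uses only that $X$ is a topological vector space). Next, I would observe by induction from hypothesis (c), together with the fact that $S$ maps $Y$ into $Y$, that
\[
T^n\bigl(S^n(y)\bigr) = y \quad \text{for all } y \in Y \text{ and } n \in \mathbb{N}.
\]
Then I would define the candidate
\[
x_n := u + S^n(v),
\]
and compute, using linearity of $T^n$,
\[
T^n(x_n) = T^n(u) + T^n\bigl(S^n(v)\bigr) = T^n(u) + v.
\]

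By hypothesis (b), $S^n(v) \to 0$, so there exists $n_1$ with $S^n(v) \in U_0$ for every $n \geq n_1$; hence $x_n \in u+U_0 \subset U$. By hypothesis (a), $T^n(u) \to 0$, so there exists $n_2$ with $T^n(u) \in V_0$ for every $n \geq n_2$; hence $T^n(x_n) = T^n(u)+v \in v+V_0 \subset V$. Setting $n_0 := \max\{n_1, n_2\}$, we get $T^n(x_n) \in T^n(U) \cap V$ for every $n \geq n_0$, which is exactly the mixing property.

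There is really no serious obstacle here; the argument is elementary once one sees that the two convergences in (a) and (b) can be applied independently and that (c) iterates cleanly because $S$ leaves $Y$ invariant. The only point deserving care is the iteration step producing $T^n \circ S^n = \mathrm{id}_Y$, which is what allows the cancellation in the computation of $T^n(x_n)$; the separability and Fréchet hypotheses on $X$ are not used in the proof of mixing itself (they are present in the statement because mixing will subsequently be combined, via Birkhoff transitivity, to deliver hypercyclicity).
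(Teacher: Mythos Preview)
Your argument is correct and is precisely the standard proof of this criterion. Note, however, that the paper does not supply its own proof of this theorem: it is stated as a quoted result attributed to Costakis and Sambarino (sharpening Kitai and Gethner--Shapiro), and then used as a black box in the proof of Theorem~\ref{main1}. So there is no ``paper's own proof'' to compare against; your write-up is exactly the elementary verification one finds in the cited sources, and your remark that separability and the Fr\'echet hypothesis are inessential for the mixing conclusion itself is accurate.
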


Before proving Theorem \ref{main1} we need some auxiliary results.

\begin{proposition}\label{densidade de ephi b}
Let $k\in\left(1,+\infty\right] $, $(\mathcal{P}_{\Theta}(^{j}E))_{j=0}^{\infty}$ be a $\pi_{1}$-holomorphy type from $E$ to $\mathbb{C}$ and $U$ be a
non-empty open subset of $E^{\prime}$.  Then the set
$S={\rm span}\{e^{\phi}:\phi\in U \}$
is dense in $Exp_{\Theta  ,0}^{k}\left( E\right) $.
\end{proposition}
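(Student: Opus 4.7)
The plan is to apply the Hahn--Banach theorem: it suffices to show that any $T\in[Exp_{\Theta,0}^{k}(E)]'$ vanishing on $S$ must be the zero functional, since this ensures $\overline{S}$ contains no proper closed subspace of codimension $\geq 1$, hence $\overline{S}=Exp_{\Theta,0}^{k}(E)$.

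Suppose $T\in [Exp_{\Theta,0}^{k}(E)]'$ satisfies $T(f)=0$ for every $f\in S$. By linearity, this is equivalent to $T(e^{\phi})=0$ for every $\phi\in U$. By the very definition of the Fourier--Borel transform, this reads
\[
\mathcal{F}T(\phi)=T(e^{\phi})=0\quad\text{for all }\phi\in U.
\]
Now I would invoke Theorem \ref{fourier_borel} with $A=0$: since $\Theta$ is a $\pi_{1}$-holomorphy type and $k\in(1,+\infty]$, the map $\mathcal{F}$ is an algebraic isomorphism onto $Exp_{\Theta',\infty}^{k'}(E')$. Inspecting the Definition of these target spaces, for $k'\in[1,+\infty)$ the space $Exp_{\Theta',\infty}^{k'}(E')=\bigcup_{\rho>0}\mathcal{B}_{\Theta',\rho}^{k'}(E')$ consists of entire functions on $E'$. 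Hence $\mathcal{F}T\in\mathcal{H}(E')$.

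The next step is to apply the identity principle for holomorphic functions on a complex Banach space (see, e.g., Mujica \cite{mujica}): an entire function on $E'$ vanishing on a non-empty open set vanishes identically. Thus $\mathcal{F}T\equiv 0$ on $E'$. Since $\mathcal{F}$ is an isomorphism (in particular injective), we conclude $T=0$. By Hahn--Banach, $S$ is dense in $Exp_{\Theta,0}^{k}(E)$.

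The main (and essentially only) obstacle is making sure all the ingredients apply: namely that $\mathcal{F}T$ is genuinely an entire function on $E'$ (not merely a germ), which holds precisely because in the hypothesis $k\in(1,+\infty]$ gives $k'\in[1,+\infty)$ and in that range $Exp_{\Theta',\infty}^{k'}(E')$ is a union of spaces of entire functions of various exponential types, rather than a space of germs. Everything else is a straightforward concatenation of Theorem \ref{fourier_borel}, the holomorphic identity theorem, and Hahn--Banach.
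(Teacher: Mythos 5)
Your proposal is correct and follows essentially the same route as the paper: Hahn--Banach reduces density to showing that any continuous functional vanishing on $S$ is zero, the Fourier--Borel isomorphism of Theorem \ref{fourier_borel} identifies $\mathcal{F}T$ with an entire function on $E'$ vanishing on $U$, the identity principle forces $\mathcal{F}T\equiv 0$, and injectivity of $\mathcal{F}$ gives $T=0$. Your added remark that $k\in(1,+\infty]$ yields $k'\in[1,+\infty)$, so that $Exp_{\Theta',\infty}^{k'}(E')$ consists of genuine entire functions rather than germs, is a correct and worthwhile clarification of a point the paper leaves implicit.
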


\begin{proof}
Assume that $S$ is not dense in $Exp_{\Theta  ,0}^{k}\left( E\right) $.  Since $Exp_{\Theta  ,0}^{k}\left(  E\right)  $ is a locally convex space, it follows as a consequence of  Hahn-Banach Theorem that there exists a nonzero functional $T\in [Exp_{\Theta  ,0}^{k}\left( E\right) ]^{\prime}$ that vanishes on $\overline{S}$.  In particular $T(e^{\phi})=0$ for each
$\phi\in U$ and so $\mathcal{F}T(\phi)=T(e^{\phi})=0$ for every $\phi \in U$
Thus $\mathcal{F} T$ is a holomorphic function that vanishes on the open non-empty set $U$ and this implies that $\mathcal{F} T\equiv 0$ on $E^\prime$. Since
$\mathcal{F}$ is injective we have $T\equiv 0,$ a contradiction. Hence $S$ is dense in $Exp_{\Theta  ,0}^{k}\left( E\right) $.
\end{proof}

Now we will prove that the exponential functions are eigenvectors for the convolution operators $L\in\mathcal{A}_{\Theta ,0}^{k}.$ Moreover, for each $L$ we will describe the eigenvalues associated to the exponential functions. This result is the key of the proof of Theorem \ref{main1}.

\begin{lemma}\label{lemma scalar}
Let $k\in\left( 1,+\infty\right] ,$ $(\mathcal{P}_{\Theta}(^{j}E))_{j=0}^{\infty}$ be a
$\pi_{1}$-holomorphy type from $E$ to $\mathbb{C}$ and
$L\in\mathcal{A}_{\Theta ,0}^{k}.$ Then:
\begin{enumerate}
\item[{\rm (a)}] $L(e^{\phi})=\mathcal{F}[\gamma_{\Theta
,0}^{k}\left( L\right)](\phi)\cdot e^{\phi }$ for every
$\phi\in E^{\prime}.$

\item[{\rm(b)}] $L$ is a scalar multiple of the identity if and only if
$\mathcal{F}\left[\gamma_{\Theta
,0}^{k}\left( L\right)\right]$ is constant.
\end{enumerate}
\end{lemma}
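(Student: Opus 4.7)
The plan is to prove (a) by an explicit computation using the representation of convolution operators as $T\ast(\cdot)$ given by Theorem \ref{bijLinear}, and then to deduce (b) by combining (a) with the density of exponential sums established in Proposition \ref{densidade de ephi b}.

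For part (a), given $L\in\mathcal{A}_{\Theta,0}^{k}$, I would set $T=\gamma_{\Theta,0}^{k}(L)\in [Exp_{\Theta,0}^{k}(E)]^{\prime}$ so that $L=\Gamma_{\Theta,0}^{k}(T)=T\ast(\cdot)$ by Theorem \ref{bijLinear}. Then for any $\phi\in E^{\prime}$ and $x\in E$,
\[
L(e^{\phi})(x)=(T\ast e^{\phi})(x)=T(\tau_{-x}e^{\phi}).
\]
The key observation is that $\tau_{-x}e^{\phi}(y)=e^{\phi(y+x)}=e^{\phi(x)}\,e^{\phi(y)}$, so $\tau_{-x}e^{\phi}=e^{\phi(x)}\cdot e^{\phi}$ as elements of $Exp_{\Theta,0}^{k}(E)$. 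Pulling the scalar $e^{\phi(x)}$ out of $T$ by linearity and using the definition of the Fourier--Borel transform $\mathcal{F}T(\phi)=T(e^{\phi})$ from Theorem \ref{fourier_borel}, I obtain
\[
L(e^{\phi})(x)=e^{\phi(x)}\,T(e^{\phi})=\mathcal{F}T(\phi)\cdot e^{\phi}(x),
\]
which is precisely the asserted identity. The only point to double-check here is that $e^{\phi}\in Exp_{\Theta,0}^{k}(E)$, which follows from the $\pi_{1}$-holomorphy type hypothesis (condition (1) of Definition \ref{pi-tipo de holomorfia} controls the $\Theta$-norm of the monomials $\phi^{j}$ so that the Taylor series of $e^{\phi}$ satisfies the condition of Proposition \ref{caracterizacao sm(r,q)}).

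For part (b), one direction is immediate: if $L=c\cdot\mathrm{Id}$, then (a) gives $c\,e^{\phi}=\mathcal{F}[\gamma_{\Theta,0}^{k}(L)](\phi)\,e^{\phi}$ for every $\phi\in E^{\prime}$, and since $e^{\phi}\not\equiv 0$ the function $\mathcal{F}[\gamma_{\Theta,0}^{k}(L)]$ is identically $c$. For the converse, assume $\mathcal{F}[\gamma_{\Theta,0}^{k}(L)]\equiv c$ on $E^{\prime}$. Then by (a), $L(e^{\phi})=c\,e^{\phi}$ for every $\phi\in E^{\prime}$, so by linearity $L$ and $c\cdot\mathrm{Id}$ agree on the subspace $S=\mathrm{span}\{e^{\phi}:\phi\in E^{\prime}\}$. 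By Proposition \ref{densidade de ephi b} (applied with $U=E^{\prime}$), $S$ is dense in $Exp_{\Theta,0}^{k}(E)$, and since both $L$ and $c\cdot\mathrm{Id}$ are continuous on this space, they coincide everywhere.

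There is no real obstacle here; the lemma is essentially a bookkeeping consequence of the structural results already in place. The only subtle point is being careful about the topology in which $\tau_{-x}e^{\phi}=e^{\phi(x)}e^{\phi}$ is interpreted and that $T$ may be evaluated on this element—both issues are settled by the fact that $e^{\phi}\in Exp_{\Theta,0}^{k}(E)$ and that $T$ is continuous and linear.
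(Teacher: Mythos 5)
Your argument for part (a) is, in substance, the same computation as the paper's: the whole lemma reduces to the identity $\tau_{-x}e^{\phi}=e^{\phi(x)}\cdot e^{\phi}$ together with the fact that $L$ commutes with translations, and your part (b) (density of $\mathrm{span}\{e^{\phi}:\phi\in E^{\prime}\}$ via Proposition \ref{densidade de ephi b} plus continuity of $L$) is exactly the paper's argument. Your side remark that $e^{\phi}\in Exp_{\Theta,0}^{k}(E)$ follows from condition (1) of the $\pi_{1}$-holomorphy type definition is correct and worth making explicit.

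There is, however, one step you should not take as stated: you invoke Theorem \ref{bijLinear} to write $L=\Gamma_{\Theta,0}^{k}(T)=T\ast(\cdot)$ with $T=\gamma_{\Theta,0}^{k}(L)$. That theorem assumes $\Theta$ is a $\pi_{2,k}$-holomorphy type, whereas the lemma only assumes $\pi_{1}$; under the lemma's hypotheses the map $\Gamma_{\Theta,0}^{k}$ need not even be well defined (for a general $T$ one needs $\pi_{2,k}$, via Theorem \ref{T*}, to guarantee $T\ast f\in Exp_{\Theta,0}^{k}(E)$). The gap is easily repaired, and the repair is precisely what the paper does: for the specific functional $T=\gamma_{\Theta,0}^{k}(L)$ one has, directly from Theorem \ref{teorema_translacao},
\[
L(e^{\phi})(y)=\bigl[\tau_{-y}\bigl(L(e^{\phi})\bigr)\bigr](0)=\bigl[L\bigl(\tau_{-y}e^{\phi}\bigr)\bigr](0)=e^{\phi(y)}\cdot L(e^{\phi})(0)=e^{\phi(y)}\cdot\mathcal{F}\bigl[\gamma_{\Theta,0}^{k}(L)\bigr](\phi),
\]
which gives (a) without ever representing $L$ as $T\ast$. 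So replace the appeal to Theorem \ref{bijLinear} by the commutation with translations and the proof is complete under the stated hypotheses.
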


\begin{proof}
(a) By Definition \ref{gamma_k} and Theorem \ref{fourier_borel} we have $\gamma_{\Theta
,0}^{k}\left( L\right)\in\lbrack Exp_{\Theta  ,0}^{k}\left( E\right)]^{\prime}$ and
\[
\mathcal{F}[\gamma_{\Theta,0}^{k}(L)](\phi)=\gamma_{\Theta,0}^{k}(L)(e^{\phi
})=L(e^{\phi})(0)
\]
for each
$\phi\in E^{\prime}.$ Therefore%
\begin{align*}
L(e^{\phi})(y) &  =[\tau_{-y}(L(e^{\phi}))](0)
 =[L\left(  \tau_{-y}(e^{\phi})\right)  ](0) =[L\left(  e^{\phi(y)}\cdot e^{\phi}\right)  ](0)\\
&  =e^{\phi(y)}\cdot L(e^{\phi})(0) =e^{\phi(y)}\cdot \mathcal{F}[\gamma_{\Theta,0}^{k}(L)](\phi) = \left(\mathcal{F}[\gamma_{\Theta,0}^{k}(L)](\phi)
\cdot e^\phi \right)(y),
\end{align*}

for all $y\in E.$\newline(b) Let $\lambda\in\mathbb{C}$ be such that
$\mathcal{F}(\gamma_{\Theta,0}^{k}(L)(\phi)=\lambda$ for every $\phi\in
E^{\prime}.$ By (a) it follows that%
\[
L(e^{\phi})=\mathcal{F}[\gamma_{\Theta,0}^{k}(L)](\phi)\cdot
e^{\phi}=\lambda e^{\phi}
\]
for every $\phi\in
E^{\prime}.$ The continuity of $L$ and the denseness of $\{e^\phi : \phi \in E'\}$ in $Exp_{\Theta  ,0}^{k}\left( E\right)$ (Proposition \ref{densidade de ephi b}) yield
that $L(f)=\lambda f$ for every $f\in Exp_{\Theta  ,0}^{k}\left( E\right)$.

Conversely, let $\lambda\in\mathbb{C}$ be such that $L(f)=\lambda f,$ for every
$f\in Exp_{\Theta  ,0}^{k}\left( E\right)$. Using part (a) again we get%

\[
\lambda
e^{\phi}=L(e^{\phi})=\mathcal{F}[\gamma_{\Theta,0}^{k}
(L)](\phi)\cdot e^{\phi}
\]
and so $\mathcal{F}[\gamma_{\Theta,0}^{k}
(L)](\phi)=\lambda,$ for every $\phi\in E^{\prime}$. 
\end{proof}
\begin{proposition}\label{Linearmente indep}
Let $k\in\left(
1,+\infty\right] $ and $(\mathcal{P}_{\Theta}(^{j}E))_{j=0}^{\infty}$ be a $\pi_{1}$-holomorphy type from $E$ to $\mathbb{C}$. Then the set
$$B=\{e^\phi : \phi\in E^{\prime}\}$$

\noindent is a linearly independent subset of $Exp_{\Theta  ,0}^{k}\left( E\right) $.
\end{proposition}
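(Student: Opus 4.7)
My plan is to establish linear independence of $B=\{e^\phi : \phi \in E'\}$ by reducing to the classical fact that the exponential functions $\{e^{\lambda t}\}_{\lambda \in \mathbb{C}}$ of a single complex variable are linearly independent for distinct $\lambda$. Suppose $\sum_{i=1}^n c_i e^{\phi_i} = 0$ in $Exp_{\Theta,0}^{k}(E)$ with $\phi_1,\dots,\phi_n \in E'$ pairwise distinct. Since $Exp_{\Theta,0}^{k}(E) \subset \mathcal{H}(E)$, the identity holds pointwise, i.e.\ $\sum_{i=1}^n c_i e^{\phi_i(x)} = 0$ for every $x \in E$.

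The key step is to exhibit an element $a \in E$ for which $\phi_1(a),\dots,\phi_n(a)$ are pairwise distinct. For each pair $i \neq j$ the functional $\phi_i - \phi_j$ is nonzero, hence its kernel $H_{ij} = \ker(\phi_i-\phi_j)$ is a proper linear subspace of $E$. A complex vector space cannot be written as a finite union of proper linear subspaces (elementary scaling argument), so the complement of $\bigcup_{i \neq j} H_{ij}$ in $E$ is nonempty; pick $a$ in this complement. Substituting $x = ta$ with $t \in \mathbb{C}$ into the pointwise identity yields
\[
\sum_{i=1}^n c_i e^{t\phi_i(a)} = 0 \qquad \text{for every } t \in \mathbb{C}.
\]
Setting $\lambda_i = \phi_i(a)$, these are pairwise distinct complex numbers, and the classical linear independence of $\{e^{\lambda_i t}\}_{i=1}^n$ (e.g.\ via iterated differentiation in $t$, or a Vandermonde determinant) forces $c_1 = \cdots = c_n = 0$.

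I do not expect serious obstacles: the argument is standard once the pointwise viewpoint is adopted, and the subspace-covering step is elementary. An alternative proof in the spirit of the preceding lemmas would use Lemma \ref{lemma scalar}(a): translations $\tau_{-a}$ are convolution operators, and they act on $e^{\phi_i}$ with eigenvalue $\mathcal{F}[\gamma_{\Theta,0}^k(\tau_{-a})](\phi_i) = e^{\phi_i(a)}$, so choosing $a$ as above produces a linear operator on $Exp_{\Theta,0}^{k}(E)$ for which the $e^{\phi_i}$ are eigenvectors with distinct eigenvalues, and linear independence is then automatic.
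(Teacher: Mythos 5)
Your proof is correct, but it follows a different route from the paper's. You read the relation $\sum_i c_i e^{\phi_i}=0$ pointwise (legitimate, since $Exp_{\Theta,0}^{k}(E)\subset\mathcal{H}(E)$ and the zero of the space is the zero function), choose $a\in E$ outside the finite union of the proper subspaces $\ker(\phi_i-\phi_j)$ (possible because a vector space over an infinite field is not a finite union of proper subspaces), and restrict to the complex line $t\mapsto ta$ to invoke the classical one-variable independence of $\{e^{\lambda_i t}\}$ with distinct $\lambda_i$. The paper instead stays inside the function space: it takes a maximal linearly independent subset of $B$, applies the directional differentiation operator $D_a f=\hat d^1 f(\cdot)(a)$ (well defined by Proposition \ref{derivadas}) to a putative dependence relation, and compares the two relations to force $\phi_{i_1}(a)=\cdots=\phi_{i_n}(a)=\phi(a)$ for every $a$, hence equality of the functionals. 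Your argument is more elementary and more portable --- it needs nothing about the holomorphy type beyond the exponentials lying in the space, and in particular does not rely on Proposition \ref{derivadas}; the paper's argument has the merit of reusing the operator machinery already set up for the hypercyclicity proof. Your suggested alternative via eigenvalues of translation operators is also sound (translations are convolution operators by Theorem \ref{teorema_translacao}, and Lemma \ref{lemma scalar} does not depend on this proposition, so there is no circularity), and it amounts to the same separation-of-exponents idea.
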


\begin{proof} 
We know that $B \subseteq Exp_{\Theta  ,0}^{k}\left( E\right)$.
Let $\{e^{\phi_i}\}_{i\in I}$ be a maximal linearly independent subset of $B$. Fix $\phi\in E'$ and assume that there exist non-zero constants $ \alpha_{i_1}, \ldots, \alpha_{i_n}\in \mathbb{C}$ such that
\begin{equation}
\alpha_{i_1} e^{\phi_{i_1}} + \cdots + \alpha_{i_n} e^{\phi_{i_n}} = e^{\phi}\label{linear}
\end{equation}
Given $a \in E$, it follows from Proposition \ref{derivadas} that the
differentiation operator
$$D_a\colon Exp_{\Theta  ,0}^{k}\left( E\right)\longrightarrow Exp_{\Theta  ,0}^{k}\left( E\right)~,~D_a\left(
f\right) =\hat{d}^1f\left(  \cdot\right)  \left(  a\right)$$
is well defined. Applying the operator $D_{a}$ in $(\ref{linear})$, it follows that

\begin{equation}
\alpha_{i_1} \cdot \phi_{i_1}(a)\cdot e^{\phi_{i_1}} + \cdots + \alpha_{i_n}\cdot \phi_{i_n}(a)\cdot e^{\phi_{i_n}} = \phi(a)\cdot e^{\phi}\label{linear1}
\end{equation}

Since $\{e^{\phi_i}\}_{i\in I}$ is linearly independent and $ \alpha_{i_1}, \ldots, \alpha_{i_n}$ are non-zero, by $(\ref{linear})$ and $(\ref{linear1})$ we have
$$ \phi_{i_1}(a) = \cdots =  \phi_{i_n}(a)=\phi(a).$$
Since $a\in E$ is arbitrary we obtain
$$ \phi_{i_1} = \cdots =  \phi_{i_n}=\phi.$$
Hence $\{\phi_i\}_{i\in I}=E'$ and so $B$
is linearly independent.
\end{proof}

\bigskip

\textit{Proof of Theorem \ref{main1}.} Let $L \colon Exp_{\Theta  ,0}^{k}\left( E\right) \longrightarrow
Exp_{\Theta  ,0}^{k}\left( E\right)$ be a nontrivial convolution operator. We shall show that
$L$ satisfies the Hypercyclicity Criterion of Theorem
\ref{Hypercyclity criterion}. First of all, since $E'$ is
separable and $\Theta$ is a $\pi_1$-holomorphy type, we have that
$Exp_{\Theta  ,0}^{k}\left(  E\right) $ is separable as well.
 By \cite[Propositions 2.7]{favaro2}, $Exp_{\Theta  ,0}^{k}\left(  E\right)  $ is a Fr\'{e}chet space. By $\Delta$ we mean the open unit disk in $\mathbb{C}$. Consider the sets%

\[
V=\{\phi\in E^{\prime}: |\mathcal{F}
[\gamma_{\Theta,0}^{k}(L)](\phi)| <1
\}=\mathcal{F}[\gamma_{\Theta,0}^{k}(L)]^{-1}(\Delta)
\]

\noindent and%

\[
W=\{\phi\in E^{\prime}:
|\mathcal{F}[\gamma_{\Theta,0}^{k}(L)](\phi)| >1
\}=\mathcal{F}[\gamma_{\Theta,0}^{k}(L)]^{-1}(\mathbb{C}-\overline{\Delta}).
\]
Since $L$ is not a scalar multiple of the identity, Lemma
\ref{lemma scalar} (b) yields that
$\mathcal{F}[\gamma_{\Theta,0}^{k}(L)]$ is non constant.
Therefore, it follows from Liouville's Theorem that $V$ and $W$
are non-empty open subsets of $E^{\prime}$. Consider now the
following subspaces of  $Exp_{\Theta  ,0}^{k}\left( E\right)$:
\[
H_V = {\rm span}\{e^{\phi}: \phi\in V\} {\rm ~~and~~}H_W ={\rm
span}\{e^{\phi}: \phi\in W\}.
\]%
By Proposition \ref{densidade de ephi b} we know that both $H_V$
and $H_W$ are dense in $Exp_{\Theta  ,0}^{k}\left( E\right)$.

Let us deal with $H_V$ first. Given $\phi \in V$, from Lemma
\ref{lemma scalar}(a) we have
\[
L(e^{\phi})=\mathcal{F}[\gamma_{\Theta,0}^{k}(L)](\phi)\cdot
e^{\phi} \in H_V.
\]
So $L(H_V) \subseteq H_V$ because $L$ is linear. 
Applying Lemma \ref{lemma scalar}(a) and the linearity of $L$ once
again we get
\[
L^{n}(e^{\phi})=\left
(\mathcal{F}[\gamma_{\Theta,0}^{k}(L)](\phi)\right)^{n}\cdot
e^{\phi}
\]
for all $n\in\mathbb{N}$ and $\phi \in V$. Now let $f \in H_{V}$, that is $f = \sum\limits_{j=1}^{m} \alpha_{j} e^{\phi_{j}}$, with $\alpha_{j} \in \mathbb{C}$ and 
$\phi_{j} \in V$. It follows that 
$$ L^{n}(f) = \sum_{j=1}^{m} \alpha_j L^{n}(e^{\phi_{j}}) = \sum_{j=1}^{m} \alpha_{j} \left(\mathcal{F}[\gamma_{\Theta,0}^{k}(L)](\phi_j)\right)^{n}e^{\phi_{j}}. $$ 
Since $\left\vert\mathcal{F}[\gamma_{\Theta,0}^{k}(L)](\phi_j)\right\vert<1$ for every $j=1,\ldots,m$, it follows that $L^{n}f \rightarrow 0$ when $n \rightarrow \infty$. 

Now we handle $H_W$. For each $\phi\in W$,
$\mathcal{F}[\gamma_{\Theta,0}^k(L)](\phi) \neq 0$, so we can
define
\[
S(e^{\phi}):=\dfrac{e^{\phi}}{\mathcal{F}[\gamma_{\Theta,0}^k(L)](\phi)}
\in Exp_{\Theta  ,0}^{k}\left( E\right).
\]
By Proposition \ref{Linearmente indep}, $\{e^{\phi}: \phi\in W\}$
is a linearly independent set. Hence $S$ admits a unique extension
to a linear mapping $S\colon H_{W} \rightarrow H_{W}$. Now let $f \in H_{W}$, that is $f = \sum\limits_{j=1}^{m} \alpha_{j} e^{\phi_{j}}$,
with $\alpha_{j} \in \mathbb{C}$ and $\phi_{j} \in W$. It follows that
$$ S^{n}f = \sum_{j=1}^{m} \frac{ \alpha_{j} e^{\phi_{j}} }{\left
(\mathcal{F}[\gamma_{\Theta,0}^{k}(L)](\phi_j)\right)^{n}}. $$
Since $\left\vert\mathcal{F}[\gamma_{\Theta,0}^{k}(L)](\phi_j)\right\vert > 1$ for every $j$, it follows that $S^{n}f \rightarrow 0$ when $n \rightarrow \infty$.

Finally, $L\circ S(f)=f$ for every $f\in H_W$, so $L$ is
mixing.\hfill$\Box$

\begin{theorem}\label{main2}
Let $k\in\left( 1,+\infty\right] $, $E^{\prime}$ be separable,
$(\mathcal{P}_{\Theta}(^{m}E))_{m=0}^{\infty}$ be a
$\pi_1$-$\pi_{2,k}$-holomorphy type and $T\in[Exp_{\Theta
,0}^{k}\left( E\right)]^{\prime}$ be a linear functional which is
not a scalar multiple of $\delta_0$, where $\delta_0$ is defined by $\delta_0(f)=f(0)$. Then
$\Gamma_{\Theta,0}^{k}(T)$ is a convolution operator that is not a
scalar multiple of the identity, hence mixing.
\end{theorem}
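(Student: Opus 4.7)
The plan is to derive Theorem \ref{main2} almost immediately from the machinery already in place, namely the bijection $\gamma_{\Theta,0}^{k}\leftrightarrow\Gamma_{\Theta,0}^{k}$ from Theorem \ref{bijLinear} together with the hypercyclicity statement of Theorem \ref{main1}. The only real work is to translate ``not a scalar multiple of $\delta_{0}$'' on the functional side into ``not a scalar multiple of the identity'' on the operator side.

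First, since $\Theta$ is a $\pi_{2,k}$-holomorphy type, Theorem \ref{T*} (equivalently, Theorem \ref{bijLinear}) guarantees that $\Gamma_{\Theta,0}^{k}(T)=T\ast$ belongs to $\mathcal{A}_{\Theta,0}^{k}$, so it is automatically a convolution operator on $Exp_{\Theta,0}^{k}(E)$. This takes care of the first half of the conclusion without any extra argument.

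Next I would prove the contrapositive of the nontriviality claim: if $\Gamma_{\Theta,0}^{k}(T)=\lambda\,\mathrm{Id}$ for some $\lambda\in\mathbb{C}$, then for every $f\in Exp_{\Theta,0}^{k}(E)$,
\[
T(f)=\gamma_{\Theta,0}^{k}\bigl(\Gamma_{\Theta,0}^{k}(T)\bigr)(f)=(\lambda\,\mathrm{Id}\,f)(0)=\lambda f(0)=\lambda\,\delta_{0}(f),
\]
where the first equality uses that $\gamma_{\Theta,0}^{k}$ is the inverse of $\Gamma_{\Theta,0}^{k}$ (Theorem \ref{bijLinear}) and the second is just the definition of $\gamma_{\Theta,0}^{k}$. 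Hence $T=\lambda\,\delta_{0}$, contradicting the hypothesis. Therefore $\Gamma_{\Theta,0}^{k}(T)$ is not a scalar multiple of the identity.

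Finally, since $\Theta$ is in particular a $\pi_{1}$-holomorphy type and $E'$ is separable, Theorem \ref{main1} applies to the nontrivial convolution operator $\Gamma_{\Theta,0}^{k}(T)$ and yields that it is mixing (and in particular hypercyclic). I do not foresee any real obstacle here; the entire argument is a bookkeeping exercise once Theorems \ref{bijLinear} and \ref{main1} are available, the only point worth flagging being the identification $\gamma_{\Theta,0}^{k}(\lambda\,\mathrm{Id})=\lambda\,\delta_{0}$ that converts the two notions of triviality into one another.
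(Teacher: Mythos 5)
Your proposal is correct and follows essentially the same route as the paper: the paper likewise invokes Theorem \ref{bijLinear} to see that $\Gamma_{\Theta,0}^{k}(T)=T\ast$ is a convolution operator, derives $T=\lambda\delta_0$ from the assumption $\Gamma_{\Theta,0}^{k}(T)=\lambda\,\mathrm{Id}$ by evaluating $(T\ast f)(0)=T(\tau_0 f)=T(f)$ (which is exactly your identity $\gamma_{\Theta,0}^{k}(\lambda\,\mathrm{Id})=\lambda\delta_0$), and then concludes by Theorem \ref{main1}.
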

\begin{proof}
By Theorem \ref{bijLinear}, for each $T\in[Exp_{\Theta ,0}^{k}\left( E\right)]^{\prime}$,  the mapping $\Gamma_{\Theta,0}^{k}(T)$ is a convolution operator. Suppose that there is
$\lambda\in\mathbb{C}$ such that
$\Gamma_{\Theta,0}^{k}(T)(f)=\lambda\cdot f$ for all
$f\in Exp_{\Theta ,0}^{k}\left( E\right)$. Then
\[
\lambda\cdot f(x)=\Gamma_{\Theta,0}^{k}(T)(f)(x)=(T\ast
f)(x)=T(\tau _{-x}f)
\]
for every $x \in E$. In particular,
\[
\lambda\cdot\delta_{0}(f)=\lambda\cdot f(0)=T(\tau_{0}f)=T(f)
\]
for every $f\in Exp_{\Theta ,0}^{k}\left( E\right)$. Hence
$T=\lambda\cdot\delta_{0}$. This contradiction shows that
$\Gamma_{\Theta,0}^{k}(T)$ is not a scalar multiple of the
identity, hence mixing by Theorem \ref{main1}.
\end{proof}

Now we have an easy but interesting application of Lemma \ref{lemma scalar}.

\begin{proposition}
\label{dense_range}Let $k\in\left( 1,+\infty\right] $, $E^{\prime}$ be separable and $(\mathcal{P}_{\Theta}(^{n}E))_{n=0}^{\infty
}$ be a $\pi_{1}$-holomorphy type from $E$ to $\mathbb{C}$. Then every nonzero convolution
operator on $Exp_{\Theta ,0}^{k}\left( E\right)$ has dense range.
\end{proposition}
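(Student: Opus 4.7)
The plan is to argue by contradiction, relying on Hahn--Banach together with the eigenvalue identity $L(e^\phi) = \mathcal{F}[\gamma_{\Theta,0}^k(L)](\phi)\cdot e^\phi$ supplied by Lemma \ref{lemma scalar}(a) and the injectivity of the Fourier--Borel transform. Suppose the range of $L$ is not dense in $Exp_{\Theta,0}^k(E)$. Since $Exp_{\Theta,0}^k(E)$ is a locally convex (in fact Fr\'echet) space, Hahn--Banach yields a nonzero $T\in [Exp_{\Theta,0}^k(E)]'$ that vanishes on the range, i.e., $T(Lg)=0$ for every $g\in Exp_{\Theta,0}^k(E)$.

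Specializing to $g=e^\phi$, $\phi\in E'$, and using Lemma \ref{lemma scalar}(a), one obtains
\[
0 \;=\; T(L(e^\phi)) \;=\; \mathcal{F}[\gamma_{\Theta,0}^k(L)](\phi)\cdot T(e^\phi) \;=\; \mathcal{F}[\gamma_{\Theta,0}^k(L)](\phi)\cdot \mathcal{F}(T)(\phi)
\]
for every $\phi\in E'$. So two holomorphic functions on $E'$ have identically vanishing product, and the proof reduces to showing that the first factor is not identically zero.

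The next step is to verify that $\mathcal{F}[\gamma_{\Theta,0}^k(L)]\not\equiv 0$. Since $L\neq 0$, choose $f\in Exp_{\Theta,0}^k(E)$ and $x_0\in E$ with $(Lf)(x_0)\neq 0$. By Theorem \ref{teorema_translacao}, $L$ commutes with translations, and $\tau_{-x_0}f\in Exp_{\Theta,0}^k(E)$ by Proposition \ref{traslacao}, so
\[
(Lf)(x_0)=\bigl(\tau_{-x_0}(Lf)\bigr)(0)=\bigl(L(\tau_{-x_0}f)\bigr)(0)=\gamma_{\Theta,0}^k(L)(\tau_{-x_0}f)\neq 0.
\]
Hence $\gamma_{\Theta,0}^k(L)\neq 0$; by the injectivity of $\mathcal{F}$ granted by Theorem \ref{fourier_borel}, the function $\mathcal{F}[\gamma_{\Theta,0}^k(L)]$ is a nonzero element of $Exp_{\Theta',\infty}^{k'}(E')$, so the open set $U=\{\phi\in E': \mathcal{F}[\gamma_{\Theta,0}^k(L)](\phi)\neq 0\}$ is nonempty.

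Finally, on $U$ we must have $\mathcal{F}(T)=0$. Since $\mathcal{F}(T)$ is holomorphic on the connected space $E'$ and vanishes on the nonempty open set $U$, the identity principle for holomorphic functions on Banach spaces forces $\mathcal{F}(T)\equiv 0$ on $E'$; the injectivity of $\mathcal{F}$ then yields $T=0$, contradicting the choice of $T$. The one point where care is needed---and what I view as the main (mild) obstacle---is the invocation of the identity principle for holomorphic functions in the infinite-dimensional setting on $E'$; this is standard (cf.\ Mujica \cite{mujica}, Dineen \cite{dineenlivro}), but the statement one needs is that a holomorphic map on a connected open subset of a Banach space which vanishes on a nonempty open subset is identically zero, applied here to the connected domain $E'$.
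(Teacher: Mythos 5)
Your proof is correct. It rests on the same two pillars as the paper's proof --- the eigenvalue identity of Lemma \ref{lemma scalar}(a) and the Hahn--Banach/identity-principle/injectivity-of-$\mathcal{F}$ machinery --- but you organize it dually. The paper argues forward: it splits into the cases where $L$ is or is not a scalar multiple of the identity, and in the latter case observes that $e^{\phi}$ lies in the range of $L$ (being $L(e^{\phi}/\lambda_{\phi})$ whenever $\lambda_{\phi}:=\mathcal{F}[\gamma_{\Theta,0}^{k}(L)](\phi)\neq 0$), then invokes Proposition \ref{densidade de ephi b} to conclude that the closure of the range is the whole space. You instead take a functional $T$ annihilating the range and unroll the content of Proposition \ref{densidade de ephi b} inline: $\mathcal{F}[\gamma_{\Theta,0}^{k}(L)]\cdot\mathcal{F}T\equiv 0$, the first factor is not identically zero, so $\mathcal{F}T$ vanishes on a nonempty open set, hence identically by the identity principle (exactly the step the paper itself uses to prove Proposition \ref{densidade de ephi b}), forcing $T=0$. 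Two small advantages of your version: your direct verification that $\gamma_{\Theta,0}^{k}(L)\neq 0$ whenever $L\neq 0$, via commutation with translations, removes the need for the case split on scalar multiples of the identity; and by working on the open set $U$ where the eigenvalue is nonzero you sidestep the paper's slightly loose claim that \emph{each} $e^{\phi}$ belongs to the range, which as stated can fail for $\phi$ in the zero set of $\mathcal{F}[\gamma_{\Theta,0}^{k}(L)]$ (this does not affect density, since that zero set has empty interior, but your formulation avoids the issue altogether).
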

\begin{proof}
Let $L\neq0$ be a convolution operator. If $L$ is a scalar multiple of the identity, then clearly $L$ is
surjective. Suppose now that $L$ is not a scalar multiple of the identity. By Proposition \ref{densidade de ephi b}, ${\rm span}\{e^{\phi}: \phi\in E^{\prime
}\}$ is dense in $Exp_{\Theta ,0}^{k}\left( E\right)$. By Lemma
\ref{lemma scalar} (a), $L(e^{\phi})=\mathcal{F}[\gamma_{\Theta,0}^k(L))(\phi)\cdot e^{\phi}$ for every $\phi \in E'$,  and this implies that each $e^{\phi}$ belongs to the range of $L$.
Therefore,
\[
Exp_{\Theta ,0}^{k}\left( E\right)=\overline{{\rm span}\{e^{\phi}: \phi\in E^{\prime}
\}}=\overline{L(Exp_{\Theta ,0}^{k}\left( E\right))}.%
\]
\end{proof}

\begin{remark} \label{canonical_cte}\rm
Note that, if $\sigma\neq 1$ and $A\neq 0$ then the concept of convolution operator on $Exp_{\Theta ,0,A}^{k}\left( E\right) $ is senseless because in this case Proposition \ref{derivadas} does not assure that $ d^{n}f\left(  \cdot\right)  a$ belongs to $Exp_{\Theta ,0,A}^{k}\left( E\right) $.  
In the most usual examples the inequality (2) of definition of holomorphy type (Definition \ref{holomorphy type}) works with the constant $\frac{j!}{k!(j-k)!}$ instead of $\sigma^{j}$.
Since $$\frac{j!}{k!(j-k)!}\leq 2^{j},$$ it follows that, in this case, inequality (2) of Definition \ref{holomorphy type} is valid with $\sigma=2$. So, if $\Theta$ is a holomorphy type such that condition (2) of Definition \ref{holomorphy type} is valid with $\frac{j!}{k!(j-k)!}$ instead of $\sigma^{j}$, for every $k,j\in\mathbb{N}_{0}$, $k\leq j $, then in Proposition \ref{derivadas} we obtain that $\widehat{d}^{n}f\left(  \cdot\right)  a\in Exp_{\Theta ,0, A}^{k}\left(  E\right)  ,$ instead of $\widehat{d}^{n}f\left(  \cdot\right)  a\in Exp_{\Theta ,0, \sigma A}^{k}\left(  E\right)  .$ Consequently, in this case, we also may define convolution operators from $Exp_{\Theta  ,0, A}^{k}\left(  E\right)$
to itself as in Definition \ref{defOperConv}.
We denote the set of all convolution
operators on $Exp_{\Theta  ,0,A}
^{k}\left(  E\right)  $ by  $\mathcal{A}_{\Theta
,0,A}^{k}.$ In this case  we say that $\Theta$ is a holomorphy type with \emph{canonical constants}.
\end{remark}

We finish this section stating the analogous of Theorem \ref{main1} in the case that  $\Theta$ is a holomorphy type with canonical constants, which encompasses most known examples. The proof is a straightforward adaptation of the previous results.


\begin{theorem}\label{main3} Let $k\in\left(
1,+\infty\right] $, $A\in\left[
0,+\infty\right)  $, $E^{\prime}$ be
separable and $(\mathcal{P}_{\Theta}(^{m}E))_{m=0}^{\infty}$ be a
$\pi_1$-holomorphy type from $E$ to $\mathbb{C}$, with canonical constants. Then every nontrivial convolution operator
on $Exp_{\Theta  ,0,A}^{k}\left( E\right)$ is mixing and thus in particular hypercyclic.
\end{theorem}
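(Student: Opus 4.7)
The plan is to mirror the proof of Theorem \ref{main1} line by line, with $Exp_{\Theta,0,A}^{k}(E)$ playing the role of $Exp_{\Theta,0}^{k}(E)$. The canonical-constants hypothesis is exactly what is needed to make this substitution legal: by Remark \ref{canonical_cte}, under this hypothesis Proposition \ref{derivadas} upgrades to $\widehat{d}^{n}f(\cdot)a\in Exp_{\Theta,0,A}^{k}(E)$ (rather than $Exp_{\Theta,0,\sigma A}^{k}(E)$), so that $\mathcal{A}_{\Theta,0,A}^{k}$ and the convolution $T\ast f$ are well defined on the space $Exp_{\Theta,0,A}^{k}(E)$, and so are the translations $\tau_{-a}$ by the analogous strengthening of Proposition \ref{traslacao}.

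First I would record the ingredients needed for the hypercyclicity criterion (Theorem \ref{Hypercyclity criterion}). The space $Exp_{\Theta,0,A}^{k}(E)$ is a Fréchet space by the proposition cited in Definition \ref{expk}, and separability follows from the separability of $E^{\prime}$ combined with the $\pi_{1}$-condition, since finite-type polynomials form a dense subset in each $\mathcal{P}_{\Theta}(^{j}E)$. The key structural input is Theorem \ref{fourier_borel}, which provides the algebraic isomorphism
\[
\mathcal{F}\colon [Exp_{\Theta,0,A}^{k}(E)]^{\prime}\longrightarrow Exp_{\Theta^{\prime},(\lambda(k)A)^{-1}}^{k^{\prime}}(E^{\prime}),
\]
with target space consisting of entire functions on $E^{\prime}$ (since $k>1$ gives $k^{\prime}<\infty$).

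Next I would establish the three auxiliary facts used in the proof of Theorem \ref{main1}, now for the space $Exp_{\Theta,0,A}^{k}(E)$:
\begin{enumerate}
\item[(i)] For every non-empty open $U\subset E^{\prime}$, $\mathrm{span}\{e^{\phi}:\phi\in U\}$ is dense in $Exp_{\Theta,0,A}^{k}(E)$. This uses Hahn--Banach and the injectivity of $\mathcal{F}$ exactly as in Proposition \ref{densidade de ephi b}: a functional annihilating the span produces $\mathcal{F}T$ entire on $E^{\prime}$ and vanishing on $U$, hence identically zero.
\item[(ii)] For $L\in\mathcal{A}_{\Theta,0,A}^{k}$, $L(e^{\phi})=\mathcal{F}[\gamma_{\Theta,0,A}^{k}(L)](\phi)\cdot e^{\phi}$, and $L$ is a scalar multiple of the identity if and only if $\mathcal{F}[\gamma_{\Theta,0,A}^{k}(L)]$ is constant. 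Both statements follow verbatim from the computation in Lemma \ref{lemma scalar}, once translations are known to preserve $Exp_{\Theta,0,A}^{k}(E)$ (canonical constants).
\item[(iii)] $\{e^{\phi}:\phi\in E^{\prime}\}$ is linearly independent in $Exp_{\Theta,0,A}^{k}(E)$, by applying the directional derivative $D_{a}$ (well defined on $Exp_{\Theta,0,A}^{k}(E)$ under canonical constants) exactly as in Proposition \ref{Linearmente indep}.
\end{enumerate}

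Finally, let $L\in\mathcal{A}_{\Theta,0,A}^{k}$ be nontrivial, set $g:=\mathcal{F}[\gamma_{\Theta,0,A}^{k}(L)]$, and define $V:=g^{-1}(\Delta)$ and $W:=g^{-1}(\mathbb{C}\setminus\overline{\Delta})$. By (ii) $g$ is non-constant and holomorphic on $E^{\prime}$, so the one-variable Liouville theorem applied to $\lambda\mapsto g(\lambda\phi+\psi)$ forces $g$ to have unbounded range, hence $V$ and $W$ are non-empty open subsets of $E^{\prime}$. Setting $H_{V}:=\mathrm{span}\{e^{\phi}:\phi\in V\}$ and $H_{W}:=\mathrm{span}\{e^{\phi}:\phi\in W\}$ one gets two dense subspaces by (i); the eigenvalue formula in (ii) shows $L^{n}(f)\to 0$ for $f\in H_{V}$; and the linear independence (iii) allows us to define $S(e^{\phi}):=e^{\phi}/g(\phi)$ on $H_{W}$, extend it linearly, check $S^{n}f\to 0$ for $f\in H_{W}$, and verify $L\circ S=\mathrm{id}$ on $H_{W}$. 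Theorem \ref{Hypercyclity criterion} then yields that $L$ is mixing. The main point requiring care is the preservation of the parameter $A$ through all the estimates in Propositions \ref{derivadas} and \ref{traslacao} under canonical constants; once this is verified, everything else is a faithful rerun of the argument of Theorem \ref{main1}.
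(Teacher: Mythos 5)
Your proposal is correct and coincides with the paper's own treatment: the authors give no separate argument for Theorem \ref{main3} beyond the remark that "the proof is a straightforward adaptation of the previous results," and your rerun of the proof of Theorem \ref{main1} — using Remark \ref{canonical_cte} to keep directional derivatives and translations inside $Exp_{\Theta,0,A}^{k}(E)$, and Theorem \ref{fourier_borel} with general $A$ for the duality — is precisely that adaptation. (Only cosmetic point: non-emptiness of $V$ needs the complementary Liouville observation that $|g|\geq 1$ everywhere would make $1/g$ a bounded entire function, not just that $g$ is unbounded.)
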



\section{Existence and Approximation Theorems for Convolution Equations}
\begin{definition}
\label{division space}\rm Let $U$ be an open subset of $E$ and
$\mathcal{F}(U)$ be a collection of holomorphic functions from $U$
into $\mathbb{C}$. We say that $\mathcal{F}(U)$ is \emph{closed
under division } if, for each $f$ and $g$ in $\mathcal{F}(U)$,
with $g\neq 0$ and $h=f/g$ a holomorphic function on $U$, we have
$h$ in $\mathcal{F}(U)$.\newline The quotient notation $h=f/g$
means that $f(x)=h(x)\cdot g(x)$, for all $x\in U$.
\end{definition}
The next useful result was proved by
Gupta in \cite{gupta}.
\begin{lemma}
\label{lema gupta} Let $U$ be an open connected subset of $E.$ Let
$f$ and $g$ be holomorphic functions on $U,$ with $g$ non
identically zero, such that, for any affine subspace $S$ of $E$ of
dimension one, and for any connected component $S^{\prime}$ of
$S\cap U$ on which $g$ is not identically zero, the restriction
$f|_{S^{\prime}}$ is divisible by the restriction
$g|_{S^{\prime}},$ with the quotient being holomorphic in
$S^{\prime}.$ Then $f$ is divisible by $g$ and the quotient is
holomorphic on $U.$
\end{lemma}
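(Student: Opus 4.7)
My plan is to construct $h$ as a holomorphic extension of the pointwise quotient $f/g$, which is manifestly well defined and holomorphic on the open set $V := U\setminus Z(g)$, where $Z(g) := \{x\in U : g(x)=0\}$. Since $g\not\equiv 0$ on the connected open set $U$, its zero set has empty interior, so the content of the lemma is to extend $h = f/g$ holomorphically across points of $Z(g)\cap U$.

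For any $x_{0}\in Z(g)\cap U$, there exist directions $a\in E$ for which $g$ is not identically zero on the component $S'_{a}$ of $(x_{0}+\mathbb{C}a)\cap U$ containing $x_{0}$ (because $g\not\equiv 0$ on any neighbourhood of $x_{0}$). For each such $a$ the hypothesis furnishes a holomorphic $h_{a}\colon S'_{a}\to\mathbb{C}$ with $f|_{S'_{a}} = h_{a}\cdot g|_{S'_{a}}$, necessarily coinciding with $f/g$ on $S'_{a}\cap V$, and I would set $\tilde h(x_{0}) := h_{a}(x_{0})$. Independence of this value from the choice of $a$ I would verify by reducing to the two-dimensional case of the statement, applied to $f|_{P\cap U}$ and $g|_{P\cap U}$ on the affine slice $P = x_{0}+\mathrm{span}(a_{1},a_{2})$: the finite-dimensional Riemann-type extension theorem in $\mathbb{C}^{2}$ yields a common holomorphic extension of $f/g$ on $P\cap U$ near $x_{0}$, forcing $h_{a_{1}}(x_{0}) = h_{a_{2}}(x_{0})$. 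Defining $\tilde h$ in this way on $Z(g)\cap U$ and as $f/g$ on $V$ produces a function on $U$ which is G-holomorphic by construction: on any affine line through any point, $\tilde h$ coincides either with $f/g$ or with a hypothesis-supplied holomorphic extension.

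The main obstacle and the real heart of the argument is upgrading G-holomorphy to holomorphy. My plan is to invoke the classical characterisation, valid in Banach spaces, that a G-holomorphic map on an open set which is locally bounded is holomorphic. Local boundedness of $\tilde h$ near an arbitrary $x_{0}\in Z(g)\cap U$ is the delicate point; the strategy is to choose one admissible direction $a_{0}$, bound $|h_{a_{0}}|$ by some constant $M$ on a closed disc $\{x_{0}+ta_{0} : |t|\le r\}\subset S'_{a_{0}}$, and then, for each $y$ sufficiently close to $x_{0}$, transfer the bound to $|\tilde h(y)|$ by applying the two-dimensional case on the plane $x_{0}+\mathrm{span}(a_{0},y-x_{0})$ together with a Cauchy estimate along a disc in the $a_{0}$-direction of radius slightly less than $r$. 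Once $\tilde h$ is known to be holomorphic on $U$, the identity $f = \tilde h\cdot g$ holds on the dense open set $V$ by construction and extends to all of $U$ by continuity of both sides, completing the proof.
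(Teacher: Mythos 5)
The paper offers no proof of this lemma to compare against: it is quoted verbatim from Gupta \cite{gupta}. Judged on its own, your strategy --- extend $f/g$ along the one-dimensional slices, verify G-holomorphy, and upgrade to holomorphy via local boundedness, with the boundedness obtained from a Cauchy/maximum-principle estimate over a circle $\{x_{0}+ta_{0}:|t|=r\}$ on which $g$ does not vanish --- is the classical route, and you correctly identify and sketch the local-boundedness step as the crux.

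Two steps are not closed as written, however. First, the consistency of the pointwise definition of $\tilde h$ on $Z(g)$ is made to rest on a ``Riemann-type extension theorem in $\mathbb{C}^{2}$'' applied to $f/g$ on a two-plane $P$; but $Z(g)\cap P$ is a hypersurface, so the second Riemann theorem does not apply and the first requires exactly the local boundedness of $f/g$ that you establish only afterwards --- so either you are silently invoking the two-dimensional case of the lemma itself as a black box, or the order of the argument must be inverted. Second, the justification of G-holomorphy (``on any affine line $\tilde h$ coincides either with $f/g$ or with a hypothesis-supplied holomorphic extension'') fails for lines whose relevant component lies entirely inside $Z(g)$, e.g.\ $g(z_{1},z_{2})=z_{1}$ and the line $z_{1}=0$: the hypothesis supplies no extension along such a line, and holomorphy of $\tilde h$ there does not follow from its pointwise definition via transverse directions. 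Both gaps close simultaneously if you define the extension near $x_{0}\in Z(g)$ directly by the integral $H(y)=\frac{1}{2\pi i}\int_{|t|=r}\frac{f(y+ta_{0})}{g(y+ta_{0})}\frac{dt}{t}$ for $y$ in a neighbourhood $W$ over which $g$ has no zeros on the circle $|t|=r$: this $H$ is holomorphic on all of $W$, the divisibility hypothesis on the line $y+\mathbb{C}a_{0}$ gives $H(y)=f(y)/g(y)$ whenever $g(y)\neq 0$, and the local extensions then patch because $U\setminus Z(g)$ is dense (else $g\equiv 0$ on the connected set $U$).
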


\begin{theorem}
\label{divisao} Let $k\in\left(
1,+\infty\right] $ and $(\mathcal{P}_{\Theta}(^{j}E))_{j=0}^{\infty}$ be a $\pi_{1}$-$\pi_{2,k}$-holomorphy type from $E$ to $\mathbb{C}$. If\linebreak
$Exp^{k'}_{\Theta^{\prime}}(E^{\prime})$ is closed under division and
$T_{1},T_{2}\in[Exp_{\Theta
,0}^{k}\left(  E\right)]^{\prime}$
are such that\ $T_{2}\neq0$ and $T_{1}\left(  P\cdot e^\phi\right)
=0$ whenever $T_{2}\ast( P\cdot e^\phi)=0$ with $\phi\in E^{\prime}$ and
$P\in\mathcal{P}_{\Theta}\left(  ^{m}E\right)  ,$
$m\in\mathbb{N}_{0},$ then $\mathcal{F}T_{1}$ is divisible by
$\mathcal{F}T_{2}$ with the quotient being an element of
$Exp^{k'}_{\Theta^{\prime}}(E^{\prime}).$
\end{theorem}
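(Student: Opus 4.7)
The overall strategy is to reduce the divisibility on $E^{\prime}$ to a one-variable problem on each complex affine line via Lemma \ref{lema gupta}, and then invoke the division-closure hypothesis. By Theorem \ref{fourier_borel}, $\mathcal{F}T_{1},\mathcal{F}T_{2}\in Exp^{k^{\prime}}_{\Theta^{\prime}}(E^{\prime})$, and since $\mathcal{F}$ is an algebraic isomorphism and $T_{2}\neq 0$, $\mathcal{F}T_{2}\not\equiv 0$. To apply Lemma \ref{lema gupta} it will suffice to prove that on every affine line $L=\{\phi_{0}+s\psi : s\in\mathbb{C}\}$ (with $\psi\neq 0$) on which $g:=\mathcal{F}T_{2}|_{L}$ does not vanish identically, the restriction $f:=\mathcal{F}T_{1}|_{L}$ is divisible by $g$ with holomorphic quotient.

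The two key computations are as follows. Expanding $e^{\varphi_{0}+s\psi}=e^{\varphi_{0}}\sum_{n\geq 0}\frac{s^{n}}{n!}\psi^{n}$ in the topology of $Exp^{k}_{\Theta,0}(E)$ and applying the continuous linear functional $T_{i}$ term by term, I would derive
\[
\mathcal{F}T_{i}(\varphi_{0}+s\psi)=\sum_{n=0}^{\infty}\frac{s^{n}}{n!}\,T_{i}(\psi^{n}e^{\varphi_{0}}),
\]
so the $n$-th Taylor coefficient at $s=0$ equals $T_{i}(\psi^{n}e^{\varphi_{0}})/n!$ scaled. Separately, using $\psi(x+y)^{n}=\sum_{j=0}^{n}\binom{n}{j}\psi(x)^{n-j}\psi(y)^{j}$, one computes
\[
(T_{2}\ast(\psi^{n}e^{\phi}))(x)=e^{\phi(x)}\sum_{j=0}^{n}\binom{n}{j}\psi(x)^{n-j}\,T_{2}(\psi^{j}e^{\phi}).
\]
Because the polynomials $x\mapsto\psi(x)^{n-j}$ have distinct degrees and are linearly independent (assuming $\psi\neq 0$), the equation $T_{2}\ast(\psi^{n}e^{\phi})=0$ is equivalent to $T_{2}(\psi^{j}e^{\phi})=0$ for all $j=0,\ldots,n$.

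Now I would argue on a fixed line: suppose $g$ has a zero of order $m\geq 1$ at $s_{0}$, and set $\phi:=\phi_{0}+s_{0}\psi$. The first computation gives $T_{2}(\psi^{j}e^{\phi})=0$ for $j=0,\ldots,m-1$, and the equivalence from the second computation then yields $T_{2}\ast(\psi^{n}e^{\phi})=0$ for each $n=0,\ldots,m-1$. Since $\Theta$ is a $\pi_{1}$-holomorphy type, $\psi^{n}\in\mathcal{P}_{f}(^{n}E)\subset\mathcal{P}_{\Theta}(^{n}E)$, so the hypothesis on $T_{1}$ applies and gives $T_{1}(\psi^{n}e^{\phi})=0$ for $n=0,\ldots,m-1$, i.e.\ $f$ has a zero of order $\geq m$ at $s_{0}$. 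Hence $f/g$ extends holomorphically across $s_{0}$, and this holds at every zero of $g$ on $L$.

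By Lemma \ref{lema gupta}, the quotient $\mathcal{F}T_{1}/\mathcal{F}T_{2}$ is therefore holomorphic on $E^{\prime}$, and by the assumption that $Exp^{k^{\prime}}_{\Theta^{\prime}}(E^{\prime})$ is closed under division, this quotient lies in $Exp^{k^{\prime}}_{\Theta^{\prime}}(E^{\prime})$. The main technical obstacle is the justification of the term-by-term interchange of $T_{i}$ with the series $e^{\varphi_{0}}\sum \frac{s^{n}}{n!}\psi^{n}$: this requires showing the series converges in the topology of $Exp^{k}_{\Theta,0}(E)$ (uniformly on compact subsets of $s$), which in turn relies on the $\pi_{1}$-estimate $\|\psi^{n}\cdot 1\|_{\Theta}\leq K^{n}\|\psi\|^{n}$ together with the exponential bounds built into the seminorms $\|\cdot\|_{\Theta,k,\rho}$.
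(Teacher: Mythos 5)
Your proposal is correct and follows essentially the same route as the paper: restrict to an affine line, read off the Taylor coefficients of $\mathcal{F}T_i(\phi_1+t\phi_2)$ as $T_i(\phi_2^{\,j}e^{\phi_1+t_0\phi_2})$, use the binomial expansion of $T_2\ast(\phi_2^{\,j}e^{\phi})$ to transfer vanishing of these coefficients into the hypothesis on $T_1$, and conclude with Gupta's division lemma (Lemma \ref{lema gupta}) plus closure under division. The only differences are cosmetic: you record the full equivalence where the paper only needs one implication, and you flag the term-by-term convergence of $e^{\phi_1}\sum_n \frac{t^n}{n!}\phi_2^{\,n}$ and the inclusion $\phi_2^{\,n}\in\mathcal{P}_{\Theta}(^{n}E)$ explicitly, which the paper leaves implicit.
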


\begin{proof}
 Let $S$ be an one dimensional affine subspace of
$E^{\prime}.$ It is clear that $S$ is of the form $\left\{
\phi_{1}+t\phi_{2};t\in\mathbb{C}\right\} ,$ where
$\phi_{1},\phi_{2}\in E^{\prime}$ are fixed. We suppose that
$t_{0}$
is a zero of order $k$ of%
\[
g_{2}\left(  t\right)  =\mathcal{F}T_{2}  \left(
\phi _{1}+t\phi_{2}\right)  =T_{2}\left(  e^{
\phi_{1}+t\phi_{2}} \right)  .
\]
Then we have%
\[
T_{2}\left(  \phi_{2}^{j}\cdot e^{ \phi_{1}+t_{0}\phi_{2}}
\right) =0,
\]
for each $j<k,$ and this implies%
$$
T_{2}\ast\left(\phi_{2}^{j}\cdot e^{ \phi_{1}+t_{0}\phi_{2}}\right) =%
{\displaystyle\sum\limits_{m=0}^{j}}
\binom{j}{m}\phi_{2}^{j-m}\cdot e^{ \phi_{1}+t_{0}\phi_{2}}\cdot
T_{2}\left(  \phi_{2}^{m}\cdot e^{ \phi_{1}+t_{0}\phi_{2}}
\right) =0,
$$ for each $j<k.$ Hence, it follows from the hypothesis that
$T_{1}\left( \phi_{2}^{j}\cdot e^{ \phi_{1}+t_{0}\phi_{2}}
\right) =0,$ for all $j<k,$ and this implies that $t_{0}$ is a
zero of order at least $k$ of $g_{1}\left(  t\right)
=\mathcal{F}T_{1}\left( \phi_{1}+t\phi_{2}\right)  .$ Therefore
$\mathcal{F}T_{1}|_{S}$ is divisible by $\mathcal{F}T_{2}|_{S}$
and the quotient is holomorphic on $S.$ By Lemma \ref{lema gupta},
we have that $\mathcal{F}T_{1}$ is divisible by $\mathcal{F}T_{2}$
on $E^{\prime}$ and the quotient is an entire function. Since $Exp^{k'}_{\Theta^{\prime}}(E^{\prime})$ is closed under division, then the quotient $\mathcal{F}T_{1}/\mathcal{F}T_{2}$ belongs to
$Exp^{k'}_{\Theta^{\prime}}(E^{\prime}).$
\end{proof}

\begin{theorem}
\label{teoremaAproximacao1}Let $k\in\left(
1,+\infty\right] $ and $(\mathcal{P}_{\Theta}(^{j}E))_{j=0}^{\infty}$  be a $\pi_{1}$-$\pi_{2,k}$-holomorphy type from $E$ to $\mathbb{C}$. If  $Exp^{k'}_{\Theta^{\prime}}(E^{\prime})$ is closed under division and $L\in\mathcal{A}_{\Theta  ,0}^{k},$ then the vector
subspace of $Exp_{\Theta  ,0}%
^{k}\left(  E\right)  $ ge\-ne\-ra\-ted by%
{\small
\[ \mathcal{L=}\left\{
P\cdot e^\varphi\colon P\in\mathcal{P}_{\Theta  }\left(  ^{n}E\right)
,n\in\mathbb{N}_0,\varphi\in E^{\prime} \textrm{ and }L\left(
P\cdot e^\varphi\right)  =0\right\}
\]}%
is dense in%
{\small
\[ \ker L=\left\{  f\in Exp_{\Theta ,0}^{k}\left(  E\right)
\colon Lf=0\right\}  .
\]}%
\end{theorem}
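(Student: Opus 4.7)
My plan is to prove density by the Hahn--Banach Theorem, reducing the problem to showing that any continuous linear functional on $Exp_{\Theta,0}^{k}(E)$ which annihilates the span of $\mathcal{L}$ also annihilates every $f\in\ker L$. The trivial case $L=0$ is immediate from Proposition \ref{densidade de ephi b}, since then $\mathcal{L}$ contains $\{e^{\varphi}:\varphi\in E'\}$ and $\ker L = Exp_{\Theta,0}^{k}(E)$, so I may assume $L\neq 0$.

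Set $T_{2}:=\gamma_{\Theta,0}^{k}(L)\in [Exp_{\Theta,0}^{k}(E)]'$, so that $L=T_{2}\ast$ by Theorem \ref{bijLinear}; since $\gamma_{\Theta,0}^{k}$ is a linear bijection and $L\neq 0$, we have $T_{2}\neq 0$. Suppose by contradiction that the span of $\mathcal{L}$ is not dense in $\ker L$. Then, by Hahn--Banach, there exist $f_{0}\in\ker L$ and $T_{1}\in [Exp_{\Theta,0}^{k}(E)]'$ such that $T_{1}$ vanishes on the span of $\mathcal{L}$ but $T_{1}(f_{0})\neq 0$. By definition of $\mathcal{L}$, the condition $T_{2}\ast(P\cdot e^{\varphi})=L(P\cdot e^{\varphi})=0$ forces $P\cdot e^{\varphi}\in\mathcal{L}$, hence $T_{1}(P\cdot e^{\varphi})=0$. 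The hypotheses of Theorem \ref{divisao} are therefore met, and I obtain that the quotient $h:=\mathcal{F}T_{1}/\mathcal{F}T_{2}$ belongs to $Exp_{\Theta'}^{k'}(E')$.

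By the surjectivity part of Theorem \ref{fourier_borel} (applied with $A=0$), there exists $T_{3}\in [Exp_{\Theta,0}^{k}(E)]'$ with $\mathcal{F}T_{3}=h$, that is, $\mathcal{F}T_{1}=\mathcal{F}T_{3}\cdot\mathcal{F}T_{2}$. Theorem \ref{isomorfismoDeAlgebras} now tells me that $\mathcal{F}$ is an algebra isomorphism, so $\mathcal{F}T_{3}\cdot\mathcal{F}T_{2}=\mathcal{F}(T_{3}\ast T_{2})$, and the injectivity of $\mathcal{F}$ yields $T_{1}=T_{3}\ast T_{2}$.

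Finally, writing $L_{3}:=T_{3}\ast$ (so $L_{3}\in\mathcal{A}_{\Theta,0}^{k}$), the definition of the convolution product on the dual gives
\[
T_{1}(f_{0})=(T_{3}\ast T_{2})(f_{0})=\gamma_{\Theta,0}^{k}(L_{3}\circ L)(f_{0})=(L_{3}\circ L)(f_{0})(0)=L_{3}(L f_{0})(0)=L_{3}(0)(0)=0,
\]
contradicting $T_{1}(f_{0})\neq 0$. Thus the span of $\mathcal{L}$ is dense in $\ker L$. The only delicate point I foresee is the correct invocation of Theorem \ref{divisao}, namely checking that the vanishing condition on $T_{1}$ translates cleanly through $L=T_{2}\ast$; the rest is a mechanical chase through the duality $\mathcal{F}$ and the algebra structure on $[Exp_{\Theta,0}^{k}(E)]'$, both of which are already in place.
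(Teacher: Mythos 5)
Your proposal is correct and follows essentially the same route as the paper: the trivial case $L=0$ via Proposition \ref{densidade de ephi b}, then Hahn--Banach reducing density to the vanishing of annihilating functionals, Theorem \ref{divisao} to factor $\mathcal{F}T_1 = h\cdot\mathcal{F}T_2$, the surjectivity of the Fourier--Borel transform to realize $h=\mathcal{F}T_3$, and the algebra structure to conclude $T_1=T_3\ast T_2$ kills $\ker L$. The only differences are presentational (you argue by contradiction and evaluate $T_1(f_0)$ via $\gamma_{\Theta,0}^{k}(L_3\circ L)$, whereas the paper argues directly and evaluates $R(f)=(R\ast f)(0)$), and your verification that the hypothesis of Theorem \ref{divisao} is met is exactly the point the paper leaves implicit.
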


\begin{proof}
First let us consider $L$ equal to $0.$ In this case $\ker L= Exp_{\Theta ,0}^{k}\left(  E\right)$ and  the result follows from Proposition \ref{densidade de ephi b}.  
Now consider
$L\neq 0$. By Theorem \ref{bijLinear} there is $T\in [Exp_{\Theta
,0}^{k}\left(  E\right)  ]'$ such
that $L=T\ast.$ Suppose that $R\in [Exp_{\Theta
,0}^{k}\left(  E\right)  ]'$ is such that $R|_{\mathcal{L}}=0.$
Thus by Theorem \ref{divisao}, there is $H\in Exp_{\Theta^{\prime}}^{k'}\left( E^{\prime}\right) $ such that
$\mathcal{F}\left(  R\right) =H\cdot\mathcal{F}\left(  T\right) .$
By the isomorphism of the Fourier-Borel transform (see Theorem \ref{fourier_borel}) there is $S\in[Exp_{\Theta
,0}^{k}\left(  E\right)  ]'$ such
that $H=\mathcal{F}\left( S\right)  $ and $\mathcal{F}\left(
R\right)  =\mathcal{F}\left( S\right)\cdot \mathcal{F}\left(
T\right) =\mathcal{F}\left(  S\ast T\right)  .$ Hence $R=S\ast T$
and for each $f\in {\rm ker }L,$ we have $R\ast f=S\ast\left(
T\ast f\right)=S\ast (Lf) =0$ and $R\left( f\right)  =\left(  R\ast f\right)
\left( 0\right)  =0.$ We showed that every $R\in [Exp_{\Theta
,0}^{k}\left(  E\right)  ]'$
vanishing on the vector subspace of $Exp_{\Theta
,0}^{k}\left(  E\right)  $ generated by $\mathcal{L}$ vanishes on
$\ker L$. Now the result follows from the
Hahn-Banach Theorem.
\end{proof}

\begin{theorem}
\label{teorema Ortogonal e fraco*}Let $k\in\left(
1,+\infty\right] $ and $(\mathcal{P}_{\Theta}(^{j}E))_{j=0}^{\infty}$ be a $\pi_{1}$-$\pi_{2,k}$-holomorphy type from $E$ to $\mathbb{C}$. If $Exp^{k'}_{\Theta^{\prime}}(E^{\prime})$ is closed under division and $L\in\mathcal{A}_{\Theta  ,0}^{k}$, then
its transpose \newline$^{t}L\colon\left[
Exp_{\Theta  ,0}^{k}\left(
E\right)  \right]  ^{\prime}\longrightarrow\left[
Exp_{\Theta ,0}^{k}\left(
E\right) \right]  ^{\prime}$is such that\newline\emph{(a)
}$^{t}L\left( \left[ Exp_{\Theta ,0}^{k}\left( E\right)  \right]
^{\prime}\right)  $ is the orthogonal of $\ker L$
in $\left[  Exp_{\Theta  ,0}%
^{k}\left(  E\right)  \right]  ^{\prime}.$\newline\emph{(b) }$^{t}%
L\left(  \left[  Exp_{\Theta  ,0}^{k}\left(  E\right)  \right]
^{\prime}\right)  $ is closed for the weak-star topology
in $\left[  Exp_{\Theta ,0}^{k}\left(  E\right)  \right]  ^{\prime}$ defined by

$Exp_{\Theta,0}^{k}\left( E\right)  .$
\end{theorem}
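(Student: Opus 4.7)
My plan is to prove (a) by double inclusion and then derive (b) as a formal consequence using the standard fact that the orthogonal of any set is weak-star closed.

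For the easy inclusion in (a), I note that if $S \in [Exp_{\Theta,0}^k(E)]'$ and $f \in \ker L$, then ${}^tL(S)(f) = S(Lf) = S(0) = 0$, so ${}^tL([Exp_{\Theta,0}^k(E)]') \subseteq (\ker L)^\perp$ directly from the definition of the transpose.

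For the reverse inclusion, I would take $R \in (\ker L)^\perp$ and produce $S \in [Exp_{\Theta,0}^k(E)]'$ with $R = {}^tL(S)$. By Theorem \ref{bijLinear} we write $L = T\ast$ for some $T \in [Exp_{\Theta,0}^k(E)]'$. Every element $P \cdot e^\varphi \in \mathcal{L}$ lies in $\ker L$, so $R$ vanishes on $\mathcal{L}$. Then I invoke Theorem \ref{divisao}: the hypothesis $R(P \cdot e^\varphi) = 0$ whenever $T \ast (P \cdot e^\varphi) = 0$ is satisfied (since the vanishing of $T \ast (P \cdot e^\varphi)$ at $0$ is the defining condition for $P\cdot e^\varphi$ to lie in $\mathcal{L}$, the full vanishing of the convolution being evaluated at the relevant points as in the proof of \ref{teoremaAproximacao1}), so $\mathcal{F}R/\mathcal{F}T \in Exp_{\Theta'}^{k'}(E')$. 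By the Fourier-Borel isomorphism (Theorem \ref{fourier_borel}) this quotient equals $\mathcal{F}S$ for some $S \in [Exp_{\Theta,0}^k(E)]'$, and by Theorem \ref{isomorfismoDeAlgebras} we obtain $\mathcal{F}R = \mathcal{F}S \cdot \mathcal{F}T = \mathcal{F}(S \ast T)$, hence $R = S \ast T$. The final step is to identify $S \ast T$ with ${}^tL(S)$: for any $f \in Exp_{\Theta,0}^k(E)$, by Definitions \ref{gamma_k} and \ref{produtoConvolucao},
\[
(S \ast T)(f) = \gamma_{\Theta,0}^k(L_S \circ L_T)(f) = [L_S(T \ast f)](0) = [S \ast (T \ast f)](0) = S(T \ast f) = S(Lf) = {}^tL(S)(f),
\]
where $L_S = S\ast$ and $L_T = T\ast$. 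This shows $R = {}^tL(S)$ and completes (a).

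Part (b) is then a general principle with no further work on the convolution side. By (a), ${}^tL([Exp_{\Theta,0}^k(E)]') = (\ker L)^\perp = \bigcap_{f \in \ker L}\{S \in [Exp_{\Theta,0}^k(E)]' : S(f) = 0\}$. Each set in this intersection is the kernel of the evaluation functional $S \mapsto S(f)$, which is weak-star continuous by the very definition of the weak-star topology on $[Exp_{\Theta,0}^k(E)]'$ induced by $Exp_{\Theta,0}^k(E)$. Hence each kernel is weak-star closed, and the intersection is weak-star closed.

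I do not anticipate a major obstacle here: the real analytic content is already packaged in Theorem \ref{divisao} and in the algebra isomorphism of Theorem \ref{isomorfismoDeAlgebras}. The one point that requires care is the bookkeeping identification $(S \ast T)(f) = S(Lf)$, but this follows transparently from the definition of the convolution product of functionals via $\gamma_{\Theta,0}^k$ together with the fact that evaluation at $0$ of a convolution $S \ast g$ gives $S(g)$.
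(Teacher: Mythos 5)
Your proof is correct and follows essentially the same route as the paper's: the easy inclusion from the definition of the transpose, the converse via Theorem \ref{divisao} applied to $T_1=R$, $T_2=T$ (the divisibility hypothesis holding because $R$ annihilates $\ker L\supseteq\mathcal{L}$), the Fourier--Borel isomorphism to produce $S$ with $R=S\ast T={}^tL(S)$, and part (b) as an intersection of kernels of weak-star continuous evaluation functionals. Your parenthetical about ``vanishing at $0$'' is slightly off (membership in $\mathcal{L}$ requires $T\ast(P\cdot e^{\varphi})$ to vanish identically, not just at $0$), but this does not affect the argument since you only use that $\mathcal{L}\subseteq\ker L$.
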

\begin{proof} If $L$ is equal to $0,$ the result is clear. Let
$L\neq 0$ and $T\in \left[ Exp_{\Theta ,0}^{k}\left( E\right)  \right]
^{\prime}$ be such
that $L=T\ast.$ For each $R\in \hspace{0.1 cm}^{t}L\left( \left[ Exp_{\Theta ,0}^{k}\left( E\right)  \right]
^{\prime}\right)  $ there is $S\in  \left[ Exp_{\Theta ,0}^{k}\left( E\right)  \right]
^{\prime}$ satisfying
$R=\hspace{0.1 cm}^{t}L\left( S\right)  .$ Hence, for each $f\in \ker L$ we have $R\left( f\right) =\hspace{0.1 cm}^{t}L\left(
S\right)  \left(  f\right)  =S\left( Lf\right)  =0,$ and
then $^{t}L\left( \left[ Exp_{\Theta ,0}^{k}\left( E\right)  \right]
^{\prime}\right)  $ is contained in the
orthogonal of $\ker L.$ Conversely, if $R$ is in the
orthogonal of $\ker L,$ then by Theorem \ref{divisao} there
is $H\in Exp_{\Theta^{\prime}}^{k'}\left( E^{\prime }\right)  $ such
that $\mathcal{F}\left(  R\right) =H\cdot\mathcal{F}\left(
T\right)  $ and by Theorem \ref{fourier_borel} there is $S\in  \left[ Exp_{\Theta ,0}^{k}\left( E\right)  \right]
^{\prime}$ such that $H=\mathcal{F}\left( S\right)  $ and
$\mathcal{F}\left( R\right)  =\mathcal{F}\left(
S\right)\cdot\mathcal{F}\left( T\right) =\mathcal{F}\left(  S\ast
T\right)  .$ Hence $R=S\ast T$ and for each $f\in Exp_{\Theta ,0}^{k}\left( E\right) ,$ we
have%
\begin{gather*}
R\left(  f\right)  =\left(  S\ast T\right)  \left(  f\right)
=\left(  \left( S\ast T\right)  \ast f\right)  \left(  0\right)
=\left(  S\ast\left(  T\ast
f\right)  \right)  \left(  0\right)  \\
=S\left(  T\ast f\right)  =S\left(  Lf\right)  =\hspace{0.1cm}^{t}L%
\left(  S\right)  \left(  f\right)
\end{gather*}
and this implies that $R=\hspace{0.1cm}^tL\left(
S\right)  $ and so $R\in\hspace{0.1cm}^{t}L\left( \left[ Exp_{\Theta ,0}^{k}\left( E\right)  \right]
^{\prime}\right)  $, proving $\left(  a\right)  $. To prove $(b)$ note
that the orthogonal of $\ker L$ is equal
to%
\[%
{\displaystyle\bigcap\limits_{f\in\ker L}}
\left\{  T\in\left[ Exp_{\Theta ,0}^{k}\left( E\right)  \right]
^{\prime};\text{ }T\left(  f\right)  =0\right\}  .
\]
Since, for each $f\in Exp_{\Theta ,0}^{k}\left( E\right)$
the set $\left\{  T\in\left[  \mathcal{H}_{\Theta b}\left(
E\right)  \right] ^{\prime}\colon T\left(  f\right)  =0\right\}
$ is closed for the weak-star topology, the result follows.
\end{proof}

\qquad The next result of this article is a theorem about
existence of solution of convolution equations. In order to prove
this result we need the following Dieudonn\'{e}-Schwartz
result (see \cite[Th\'eor\`eme 7]{DS} or \cite[p. 308]{horvath}).

\begin{lemma}
\label{lema teorema de existencia}If $E$ and $F$ are Fr\'{e}chet
spaces and $u\colon E\longrightarrow F$ is a linear continuous
mapping, then the following conditions are
equivalent:\newline\emph{(a) }$u\left(  E\right)
=F;$\newline\emph{(b) }$^{t}u\colon F^{\prime}\longrightarrow
E^{\prime}$ is injective and $^{t}u\left(  F^{\prime}\right)  $ is
closed for the weak-star topology of $E^{\prime}$ defined by $E.$
\end{lemma}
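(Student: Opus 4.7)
The plan is to handle each implication separately, leaning on classical tools from the duality theory of Fr\'echet spaces.

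For $(a)\Rightarrow(b)$, I would first check injectivity: if $^{t}u(y')=y'\circ u=0$, then $y'$ vanishes on $u(E)=F$, so $y'=0$. The weak-star closedness then follows from the open mapping theorem for Fr\'echet spaces, which makes the induced surjection $\bar u\colon E/\ker u\to F$ a topological isomorphism. Taking transposes, $^{t}u$ factors as $^{t}\pi\circ {}^{t}\bar u$, where $\pi\colon E\to E/\ker u$ is the quotient, and one obtains $^{t}u(F')=(\ker u)^{\perp}$; this annihilator is weak-star closed because it is the intersection of the kernels of the weak-star continuous evaluation maps $T\mapsto T(x)$ for $x\in\ker u$.

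For $(b)\Rightarrow(a)$, I would proceed in two stages. First, injectivity of $^{t}u$ forces $u(E)$ to be dense in $F$ by Hahn-Banach: a nonzero $y'\in F'$ vanishing on $\overline{u(E)}$ would satisfy $^{t}u(y')=0$. The second and more delicate stage is upgrading density to equality using the weak-star closedness hypothesis. The key tool is the Krein-Smulian theorem in its Fr\'echet-space form: a convex subset of $E'$ is weak-star closed if and only if its intersection with the polar $V^{\circ}$ of every $0$-neighborhood $V\subset E$ is weak-star closed (equivalently, weak-star compact, by Alaoglu). Applying this to $^{t}u(F')$, together with injectivity and the weak-star continuity of $^{t}u$, would show that $^{t}u$ is a weak-star homeomorphism onto its range; a standard polar/bipolar calculation then identifies $u(E)$ with the annihilator of $\ker {}^{t}u=\{0\}$ inside $F$, yielding $u(E)=F$.

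The main obstacle is precisely this second stage: density of $u(E)$, which comes for free from injectivity of $^{t}u$, is manifestly not enough, so the weak-star closedness hypothesis must be leveraged in an essential way. Converting that purely dual-side information into topological information about $u$ itself requires the full strength of the Krein-Smulian / Banach-Dieudonn\'e apparatus (or, equivalently, Ptak's $B$-completeness theorem), and it is this passage, rather than any of the bookkeeping with transposes and polars, where the Fr\'echet hypothesis is genuinely used.
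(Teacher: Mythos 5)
The paper does not prove this lemma at all: it is imported verbatim from Dieudonn\'e--Schwartz (Th\'eor\`eme 7 of \cite{DS}) and Horv\'ath, so there is no internal proof to compare against. Measured against the classical proof you are reconstructing, your $(a)\Rightarrow(b)$ direction is complete and correct: injectivity of $^{t}u$ from surjectivity of $u$ is immediate, and the identification $^{t}u(F')=(\ker u)^{\perp}$ via the open mapping theorem does exhibit the range as an intersection of kernels of evaluations, hence weak-star closed.

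The $(b)\Rightarrow(a)$ direction, however, is a roadmap rather than a proof, and the one concrete intermediate claim you commit to is not the right pivot. You assert that Krein--\v{S}mulian plus injectivity ``would show that $^{t}u$ is a weak-star homeomorphism onto its range,'' and that a bipolar computation then finishes. But a weak-star homeomorphism onto the range is neither obviously obtainable (there is no open mapping theorem for the weak-star topologies, which are not metrizable here) nor what the argument needs. The standard route --- the content of Dieudonn\'e--Schwartz, or Tr\`eves' homomorphism theorem --- is to prove via Banach--Dieudonn\'e/Krein--\v{S}mulian that weak-star closedness of $^{t}u(F')$ is equivalent to $u(E)$ being \emph{closed} in $F$ (equivalently, to $u$ being a topological homomorphism onto its range); combined with the density of $u(E)$ that you correctly extract from injectivity of $^{t}u$, this yields $u(E)=F$. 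That equivalence is precisely the hard step, and your sketch defers it entirely to ``the Krein--\v{S}mulian / Banach--Dieudonn\'e apparatus'' without carrying it out. As a gloss on a cited classical theorem this is acceptable; as a self-contained proof it has a genuine gap exactly where you say the difficulty lies.
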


\begin{theorem}
\label{teorema de existencia}Let $k\in\left(
1,+\infty\right] $ and $(\mathcal{P}_{\Theta}(^{j}E))_{j=0}^{\infty}$ be a $\pi_{1}$-$\pi_{2,k}$-holomorphy type from $E$ to $\mathbb{C}$. If  $Exp^{k'}_{\Theta^{\prime}}(E^{\prime})$ is closed under division and $L\in\mathcal{A}_{\Theta  ,0}^{k},$ then $L\left(
Exp_{\Theta  ,0}^{k}\left(E\right)  \right) =Exp_{\Theta  ,0}^{k}\left( E\right)  .$
\end{theorem}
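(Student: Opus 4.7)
The plan is to invoke the Dieudonn\'e–Schwartz surjectivity criterion (Lemma \ref{lema teorema de existencia}). Since $Exp_{\Theta,0}^{k}(E)$ is a Fr\'echet space (Definition \ref{expk}) and $L$ is continuous, the surjectivity of $L$ is equivalent to two properties of the transpose: injectivity of ${}^{t}L$, and weak-star closedness of its range. The case $L=0$ would violate the conclusion, so we may assume $L\neq 0$ and, by Theorem \ref{bijLinear}, fix a nonzero $T\in\bigl[Exp_{\Theta,0}^{k}(E)\bigr]'$ with $L=T\ast$.

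The weak-star closedness of ${}^{t}L\bigl(\bigl[Exp_{\Theta,0}^{k}(E)\bigr]'\bigr)$ is provided directly by Theorem \ref{teorema Ortogonal e fraco*}(b), so the remaining task is injectivity of ${}^{t}L$. The key observation is that ${}^{t}L$ acts on the dual as right-convolution by $T$: for any $S\in\bigl[Exp_{\Theta,0}^{k}(E)\bigr]'$ and $f\in Exp_{\Theta,0}^{k}(E)$, combining Definition \ref{gamma_k} and Definition \ref{produtoConvolucao} gives
\[
(S\ast T)(f)=\gamma_{\Theta,0}^{k}(L_{S}\circ L_{T})(f)=\bigl(S\ast(T\ast f)\bigr)(0)=S(T\ast f)=S(Lf)={}^{t}L(S)(f).
\]

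Now suppose ${}^{t}L(S)=0$, i.e.\ $S\ast T=0$. Applying the Fourier–Borel algebra isomorphism (Theorem \ref{isomorfismoDeAlgebras}) transforms this into $\mathcal{F}(S)\cdot\mathcal{F}(T)=0$ in $Exp_{\Theta'}^{k'}(E')$, a pointwise identity of entire functions on $E'$. Since $T\neq 0$ and $\mathcal{F}$ is a bijection, $\mathcal{F}(T)$ is a nonzero holomorphic function on the connected space $E'$, hence nonvanishing on some open ball; on that ball $\mathcal{F}(S)$ must vanish, and the identity principle for holomorphic functions on Banach spaces forces $\mathcal{F}(S)\equiv 0$, whence $S=0$. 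Thus ${}^{t}L$ is injective and Dieudonn\'e–Schwartz yields the surjectivity of $L$.

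The main obstacle, such as it is, is the identification ${}^{t}L(S)=S\ast T$, which reduces the analytic injectivity problem to the integral-domain property of entire functions on a Banach space. After that, the proof is essentially bookkeeping. It is worth noting that the closure-under-division hypothesis on $Exp_{\Theta'}^{k'}(E')$ is used only through Theorem \ref{teorema Ortogonal e fraco*}(b) to guarantee the weak-star closedness; the injectivity step relies solely on the Fourier–Borel isomorphism of algebras and the zero-divisor freeness of holomorphic functions.
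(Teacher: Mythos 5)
Your proposal is correct and follows essentially the same route as the paper: reduce to the Dieudonn\'e--Schwartz criterion, obtain weak-star closedness of the range of ${}^{t}L$ from Theorem \ref{teorema Ortogonal e fraco*}(b), identify ${}^{t}L(S)=S\ast T$, and deduce injectivity from $\mathcal{F}(S\ast T)=\mathcal{F}S\cdot\mathcal{F}T$ together with the fact that entire functions on $E'$ have no zero divisors. Your write-up is in fact slightly more explicit than the paper's in justifying $\mathcal{F}S\equiv 0$ via the identity principle and in isolating where the closure-under-division hypothesis enters.
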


\begin{proof}
By \cite[Proposition 2.7]{favaro2}, $Exp_{\Theta  ,0}^{k}\left(  E\right)  $ is a Fr\'{e}chet space. By Lemma
\ref{lema teorema de existencia} (b) and Theorem \ref{teorema
Ortogonal e fraco*} $(b)$, it is enough to show
that $^{t}L$ is injective.  Since $L=T\ast$ for some $T\in \left[
\mathcal{H}_{\Theta b}\left(  E\right)  \right]  ^{\prime}$ then,
for all $S\in \left[ Exp_{\Theta  ,0}^{k}\left(  E\right)
\right]  ^{\prime }$ and $f\in Exp_{\Theta  ,0}^{k}\left(  E\right)   $ we have $$
^tL (S)  \left(  f\right)  =S\left(  L%
f\right)  =S\left(  T\ast f\right)  =\left(  S\ast T\right) \left(
f\right)
.$$ Thus $^tL (S)=S\ast T$ and if $^tL(S)=0,$ then $S\ast T=0$ and $\mathcal{F}\left(  S\ast T\right)
=0.$ Since $L=T\ast$ is non zero, then it follows that
$\mathcal{F}T$ is non zero and since $\mathcal{F}\left( S\ast
T\right) =\mathcal{F}S\cdot\mathcal{F}T,$ we get $\mathcal{F}S=0.$
Hence $S=0$ and $^tL$ is injective.
\end{proof}

%
%

\section{Applications}

We finish the paper showing the applicability of Theorems \ref{main1}, \ref{main2}, \ref{main3},  \ref{teoremaAproximacao1} and \ref{teorema de existencia}.

\bigskip

\noindent (1) Consider the finite dimensional case $E=\mathbb{C}^n.$ Then, $\mathcal{P}_{\Theta}(^{m}E)=\mathcal{P}(^{m}\mathbb{C}^{n}),$ for all $m\in\mathbb{N}_0$. Considering $k=\infty$ we get $Exp_{\Theta  ,0}^{k}\left( E\right)=\mathcal{H}(\mathbb{C}^{n})$. Thus, using Theorem \ref{main1} we recover the result of Godefroy
and Shapiro \cite{godefroy} (and consequently the results of Birkhoff \cite{birkhoff} and MacLane \cite{maclane}) which states that every nontrivial convolution operator 
$L\colon\mathcal{H}(\mathbb{C}^{n})\rightarrow \mathcal{H}(\mathbb{C}^{n})$ is hypercyclic. Moreover, for an arbitrary $k\in(1,+\infty]$ we obtain the unknown result that every nontrivial convolution operator 
$L\colon Exp_{0}^{k}(\mathbb{C}^{n})\rightarrow Exp_{0}^{k}(\mathbb{C}^{n})$ is mixing. More generally, since $(\mathcal{P}(^{m}\mathbb{C}^{n}))_{m=0}^\infty$ is a holomorphy type with canonical constants according to Remark \ref{canonical_cte}, then, for $A\in\left[  0,+\infty\right)  $, we also obtain that every nontrivial convolution operator $L\colon Exp_{0, A}^{k}(\mathbb{C}^{n})\rightarrow Exp_{0, A}^{k}(\mathbb{C}^{n})$ is mixing. 
It is easy to check that $(\mathcal{P}(^{m}\mathbb{C}^{n}))_{m=0}^\infty$ is a $\pi_{2,k}$-holomorphy type. Hence it follows from Theorem \ref{main2} that if $T\in[Exp_{0}^{k}\left( \mathbb{C}^{n}\right)]^{\prime}$ is a linear functional which is
not a scalar multiple of $\delta_0$, then the convolution operator
$\Gamma_{\Theta,0}^{k}(T)$ on $Exp_{0}^{k}(\mathbb{C}^{n})$ is mixing.

Finally, using \cite[Corollaire 1]{martineau} it is easy to check that $Exp^{k'}_{\Theta^{\prime}}(\mathbb{C}^{n})=Exp^{k'}(\mathbb{C}^{n})$ is closed under division. Therefore it follows from Theorem \ref{teorema de existencia} that for each $g\in Exp_{0}^{k}(\mathbb{C}^{n}),$ the convolution equation $Lf=g$ has a solution $f\in Exp_{0}^{k}(\mathbb{C}^{n})$. Moreover, it follows from Theorem \ref{teoremaAproximacao1} that each solution of the homogeneous equation $Lf=0$ can be approximated by exponential polynomials solutions. Hence we recover the existence and approximation results for convolution operators on $Exp_{0}^{k}(\mathbb{C}^{n})$ obtained by Martineau \cite{martineau}.

\bigskip

\noindent (2) Let $E$ be a complex Banach space such that $E^\prime$ has the bounded approximation property and consider the space $\mathcal{P}_{N}(^{m}E)$ of all nuclear $m$-homogeneous polynomials on $E$. It is well known that $\Theta=N$  is a $\pi_1$-holomorphy type (see for instance Dineen \cite[Example 3]{dineen}). It is also easy to check that $\Theta=N$ is a holomorphy type with canonical constants. By Matos \cite[Proposition 3.9]{matos3} $\Theta=N$ is also a $\pi_{2,k}$-holomorphy type. Since $\Theta^\prime$ is the holomorphy type of all continuous $m$-homogeneous polynomials on $E^\prime$ we have that $Exp^{k'}_{\Theta^{\prime}}(E^\prime)=Exp^{k'}(E^\prime)$
and by Matos \cite[Theorem 4.3]{matos2} 
$$
\left[Exp^{k}_{N,0}(E)\right]^{\prime}=Exp^{k'}(E^\prime).
$$
Moreover, since $$Exp_0^{k'}(E^\prime)\subset Exp_A^{k'}(E^\prime)\subset Exp_{0,A}^{k'}(E^\prime)\subset Exp_B^{k'}(E^\prime)\subset Exp^{k'}(E^\prime),$$ for every $k\in[1,\infty]$ and $0<A<B<\infty$, then using Matos \cite[Corollary 4.5]{matos3} it is easy to check that $Exp^{k'}(E^\prime)$ is closed under division. Thus, we have the following unknown results:
\begin{itemize}

\item Every nontrivial convolution operator $L\colon Exp_{N,0, A}^{k}(E)\rightarrow Exp_{N,0, A}^{k}(E)$ is mixing, for every $k\in(1,+\infty]$ and $A\in\left[  0,+\infty\right)  $.

\item If $T\in[Exp_{N,0}^{k}\left( E\right)]^{\prime}$ is a linear functional which is
not a scalar multiple of $\delta_0$, then the convolution operator
$\Gamma_{N,0}^{k}(T)$ on $Exp_{N,0}^{k}(E^{\prime})$ is mixing.

\end{itemize}

If $k\in(1,+\infty]$ and $L\colon Exp_{N,0}^{k}(E)\rightarrow Exp_{N,0}^{k}(E)$ is a convolution operator, then we also recover the following results of  \cite{matos3}:

\begin{itemize}

\item For each $g\in Exp_{N,0}^{k}(E),$ the convolution equation $Lf=g$ has a solution $f\in Exp_{N,0}^{k}(E)$. 

\item Each solution of the homogeneous equation $Lf=0$ can be approximated by exponential polynomials solutions in $Exp_{N,0}^{k}(E).$

\end{itemize}

\bigskip

\noindent (3) Let $0<s\leq\infty$ and $1\leq q,r\leq\infty$ such that $q^\prime\leq r^\prime$ and $$1\leq\frac{1}{s}+\frac{m}{q^\prime},$$ for every $m\in\mathbb{N}$ (as usual $s^\prime, r^\prime, q^\prime$ denote the conjugates of $s, r, q$, respectively). Consider the space $\mathcal{P}_{\widetilde{N},\left( s;\left( r,q\right)
\right)}\left(^{m}E\right)$ of all $\left(
s;\left( r,q\right) \right)$-quasi-nuclear $m$-homogeneous
polynomials on the complex Banach space $E$ introduced by Matos \cite[Section 7.2]{Matos-livro}. Let us consider also $E^\prime$ having the bounded approximation property. The proof that $\left(\mathcal{P}_{\widetilde{N},\left( s;\left( r,q\right)
\right)}\left(^{m}E\right)\right)_{m=0}^\infty$ is a $\pi_1$-holomorphy type can be found  in \cite[Sections 8.2 and 8.3]{Matos-livro} (see also \cite[Example 3.11. (a)]{favaro3}) and it is easy to check that this is a holomorphy type with canonical constants. By F\'avaro \cite[Proposition 2.14]{FaBelg} $\left(\mathcal{P}_{\widetilde{N},\left( s;\left( r,q\right)
\right)}\left(^{m}E\right)\right)_{m=0}^\infty$ is a $\pi_{2,k}$-holomorphy type. Moreover, Matos proved in \cite[Section 8.2]{Matos-livro} that when
$E^{\prime}$ has the bounded approximation property,
then the Borel transform $\mathcal{B}_{\widetilde{N},\left(
s;\left( r,q\right) \right) }$ establishes an isometric
isomorphism between $\left[\mathcal{P}_{\widetilde{N},\left(
s;\left( r,q\right) \right)}\left(^{m}E\right)\right]^{\prime}$
and $\mathcal{P}_{\left( s^{\prime},m\left(
r^{\prime};q^{\prime}\right) \right)}\left(^{m}E^{\prime}\right)$, where $\mathcal{P}_{\left( s^{\prime},m\left(
r^{\prime};q^{\prime}\right) \right)}\left(^{m}E^{\prime}\right)$ denotes the space of all absolutely $\left( s^{\prime},m\left(
r^{\prime};q^{\prime}\right) \right)$-summing $m$-homogeneous
polynomials on $E^\prime$ introduced by Matos \cite[Section 3]{matosjmaa}. So, in this case the role of $Exp^{k'}_{\Theta^{\prime}}(E^{\prime})$ is played by $Exp^{k'}_{\left( s^{\prime},m\left(
r^{\prime};q^{\prime}\right) \right)}(E^{\prime})$ and the isomophism 
$$
\left[Exp^{k}_{\widetilde{N},\left(
s;\left( r,q\right) \right) }(E)\right]^{\prime}=Exp^{k'}_{\left( s^{\prime},m\left(
r^{\prime};q^{\prime}\right) \right)}(E^\prime)
$$
given by the Fourier-Borel transform is in F\'avaro \cite[Theorem 3.5]{favaro}.
 
Furthermore, since $$Exp_{0,\Theta^\prime}^{k'}(E^\prime)\subset Exp_{\Theta^\prime,A}^{k'}(E^\prime)\subset Exp_{\Theta^\prime,0,A}^{k'}(E^\prime)\subset Exp_{\Theta^\prime,B}^{k'}(E^\prime)\subset Exp^{k'}_{\Theta^\prime}(E^\prime),$$ for every $k\in[1,\infty]$ and $0<A<B<\infty$, then using F\'avaro \cite[Theorem 3.5 and Remark 3.6]{FaBelg} it is easy to check that $Exp^{k'}_{\left( s^{\prime},m\left(
r^{\prime};q^{\prime}\right) \right)}(E^\prime)$ is closed under division. Thus, we have the following unknown results:

\begin{itemize}

\item Every nontrivial convolution operator $L\colon Exp_{\widetilde{N},\left(
s;\left( r,q\right) \right),0, A}^{k}(E)\rightarrow Exp_{\widetilde{N},\left(
s;\left( r,q\right) \right),0, A}^{k}(E)$ is mixing, for every $k\in(1,+\infty]$ and $A\in\left[  0,+\infty\right)  $.

\item If $T\in[Exp_{\widetilde{N},\left(
s;\left( r,q\right) \right),0}^{k}\left( E\right)]^{\prime}$ is a linear functional which is
not a scalar multiple of $\delta_0$, then the convolution operator
$\Gamma_{\widetilde{N},\left(
s;\left( r,q\right) \right),0}^{k}(T)$ on $Exp_{\widetilde{N},\left(
s;\left( r,q\right) \right),0}^{k}(E^{\prime})$ is mixing.

\end{itemize}

If $k\in(1,+\infty]$ and $L\colon Exp_{\widetilde{N},\left(
s;\left( r,q\right) \right),0}^{k}(E)\rightarrow Exp_{\widetilde{N},\left(
s;\left( r,q\right) \right),0}^{k}(E)$ is a convolution operator, then we also recover the following results of  \cite{FaBelg}:

\begin{itemize}

\item For each $g\in Exp_{\widetilde{N},\left(s;\left( r,q\right) \right),0}^{k}(E),$ the convolution equation $Lf=g$ has a solution $f\in Exp_{\widetilde{N},\left(
s;\left( r,q\right) \right),0}^{k}(E)$. 

\item Each solution of the homogeneous equation $Lf=0$ can be approximated by exponential polynomials solutions in $Exp_{\widetilde{N},\left(
s;\left( r,q\right) \right),0}^{k}(E).$

\end{itemize}

\bigskip

\noindent (4) We can also obtain the hypercyclicity result given in Theorem \ref{main3} for the following holomorphy types (both are holomorphy types with canonical constants):

\begin{itemize}

\item $\left(  \mathcal{P}_{\widetilde{N},\left(  \left(
r,q\right)  ;\left(  s,p\right)  \right)  }\left(  ^{m}E\right)  \right)
_{m=0}^{\infty}:$ the holomorphy type of all \emph{Lorentz $((r,q);(s,p))$
-quasi-nuclear} $m$-homogeneous polynomials from $E$ to $\mathbb{C}$, $m\in\mathbb{C},$ where  $r,q,s,p\in\lbrack1,\infty\lbrack$,
$r\leq q$, $s^{\prime}\leq p^{\prime}$ and%
\[
1\leq\frac{1}{q}+\frac{m}{p^{\prime}}, \textrm{ for all } m\in\mathbb{C}.
\]
See \cite[Section 2 and Definition 4.4]{FP}.  

\item $\left(\mathcal{P}_{\sigma(p)}\left(^{m}E\right)\right)
_{m=0}^{\infty}:$ the holomorphy type of all $\sigma(p)$\emph{-nuclear} $m$-homogeneous polynomials from $E$ to $\mathbb{C}$, $m\in\mathbb{C}$, where $p\geq 1,$ defined in the obvious way according to the multilinear case studied in \cite{BM}.

\end{itemize}
Consider $E^\prime$ having the bounded approximation property. Then \cite[Propositions 4.9]{FP} and \cite[p.7]{BM} ensure that $\left(  \mathcal{P}_{\widetilde{N},\left(  \left(
r,q\right)  ;\left(  s,p\right)  \right)  }\left(  ^{m}E\right)  \right)
_{m=0}^{\infty}$  and $\left(\mathcal{P}_{\sigma(p)}\left(^{m}E\right)\right)
_{m=0}^{\infty}$ are $\pi_1$-holomorphy types, respectively. Hence,  for $k\in(1,+\infty]$ and $A\in\left[  0,+\infty\right)  $, every nontrivial convolution operator $$L\colon Exp_{\widetilde{N},\left(  \left(
r,q\right)  ;\left(  s,p\right)  \right),0, A}^{k}(E)\rightarrow Exp_{\widetilde{N},\left(  \left(
r,q\right)  ;\left(  s,p\right)  \right),0, A}^{k}(E)$$ or $$L\colon Exp_{\sigma(p),0, A}^{k}(E)\rightarrow Exp_{\sigma(p),0, A}^{k}(E)$$ is mixing. For details about the duality results given by the Borel transform and the theory involving the Lorentz polynomials and $\sigma(p)$-nuclear polynomials we refer to \cite{FMP, FP, Matos-Pellegrino}  and \cite{favaro3, BM}, respectively.
%

{\em Authors' addresses}: 
Faculdade de Matem\'{a}tica, Universidade Federal de Uberl\^{a}ndia,
38.400-902 - Uberl\^{a}ndia, Brazil,\newline
 e-mails: \texttt{vvfavaro@gmail.com}

\qquad\texttt{marquesjatoba@ufu.br}


\begin{thebibliography}{99}

\bibitem{aron}  R. Aron and J. B\`es \textit{Hypercyclic
differentiation operators}, in: Function Spaces, Edwardsville, IL,
1998. In: Contemp. Math., vol. 232, Amer. Math. Soc., Providence,
RI, 1999, pp. 39--46.


\bibitem {bay}F. Bayart, \'E. Matheron, \textit{Dynamics of linear operators},
Cambridge Tracts in Mathematics, 179. Cambridge University Press, Cambridge, 2009.

\bibitem {bay2}F. Bayart, \'E. Matheron, \textit{Mixing operators and small
subsets of the circle}, J. Reine Angew. Math. \textbf{715} (2016), 75--123.

\bibitem{favaro3}\textrm{F. J. Bertoloto, G. Botelho, V. V. F\'avaro, A. M. Jatob\'a,} \textit{Hypercyclicity of convolution operators on spaces of entire functions}, Ann. Inst. Fourier \textbf{63} (2013), 1263-1283.

\bibitem {bes2012}J. B\`es, \"O. Martin, A. Peris and S. Shkarin
\textit{Disjoint mixing operators}, J. Funct. Anal. \textbf{263} (2012), 1283--1322.

\bibitem {birkhoff}G. D. Birkhoff, \textit{D\'emonstration d'un th\'eor\`eme
\'el\'ementaire sur les fonctions enti\`eres}, C. R. Acad. Sci. Paris
\textbf{189} (1929), 473--475.

\bibitem {Boland}\textrm{P. J. Boland}, \textit{Malgrange theorem for entire functions on
nuclear spaces}. Proceedings on Infinite Dimensional Holomorphy.
Lecture Notes in Math. \textbf{364}, Springer Verlag (1974),
135-144.

\bibitem {BM}\textrm{G. Botelho, X. Mujica}, \textit{The spaces of $\sigma(p)$-nuclear linear and multilinear operators and their duals}, arXiv: 1608.06926v1 [math.FA].

\bibitem {CDSjmaa}D. Carando, V. Dimant, S. Muro, \textit{Hypercyclic
convolution operators on Fr\'echet spaces of analytic functions}. J. Math.
Anal. Appl. \textbf{336} (2007), 1324--1340.

\bibitem {chan}K. C. Chan and J. H. Shapiro, \textit{The cyclic behaviour of
translation operators on Hilbert spaces of entire funcitons}, Indiana Univ. J.
\textbf{40} (1991), 1421--1449.

\bibitem {Col-Mat}\textrm{J. F. Colombeau, M. C. Matos}. \textit{Convolution equations in
spaces of infinite dimensional entire functions}. Indag. Math.,
\textbf{42} (1980), 375-389.

\bibitem {cgp}J.F Colombeau, R. Gay and B. Perrot, \textit{Division by
holomorphic functions and convolution equations in infinite dimension}, Trans.
Amer. Math. Soc. \textbf{264} (1981), 381-391.

\bibitem {cp}\textrm{J.F Colombeau, B. Perrot}, \textit{Convolution equations in spaces of
infinite dimensional entire functions of exponencial and related
types}. Trans. Amer. Math. Soc. \textbf{258} (1980), 191-198.

\bibitem{costakis} G. Costakis, M. Sambarino, \textit{Topologically mixing operators},
Proc. Amer. Math. Soc. {\bf 132} (2004), 385-389.

\bibitem {DS}J. Dieudonn\'e and L. Schwartz, \textit{La dualit\'e dans les
espaces ($\mathcal{F}$) et ($\mathcal{LF}$),} Ann. Inst. Fourier (Grenoble)
\textbf{I} (1949), 61--101.

\bibitem {dineen}\textrm{S. Dineen}, \textit{Holomorphy types on a Banach space}. Studia Math.
{\bf 39} (1971), 241-288.

\bibitem{dineenlivro} S. Dineen, {\it Complex analysis on infinite dimensional spaces}, Springer, 1999.

\bibitem {DW}\textrm{T.A.W. Dwyer III}, \textit{Convolution equations for vector-valued entire
functions of nuclear bounded type}. Trans. Amer. Math. Soc.
\textbf{217} (1976), 105-119.

\bibitem {DW2}\textrm{T.A.W. Dwyer III}, \textit{Partial differential equations in Fischer-Fock
spaces for the Hilbert-Schmidt holomorphy type}. Bull. Amer. Math.
Soc. \textbf{77} (1971), 725-730.

\bibitem {favaro}\textrm{V.V. F\'{a}varo}, \textit{The Fourier-Borel transform
between spaces of entire functions of a given type and order}.
Port. Math. \textbf{65} (2008), 285-309.

\bibitem {FaBelg}V. V. F\'{a}varo,\textit{Convolution equations on spaces of
quasi-nuclear functions of a given type and order}, Bull. Belg. Math. Soc.
Simon Stevin \textbf{17} (2010), 535--569.

\bibitem {favaro1} \textrm{V. V. F\'avaro, A. M. Jatob\'a}, \textit{Holomorphy types and spaces of entire functions of bounded type on Banach spaces}, Czechoslovak Math. J. \textbf{59} (134) (2009), 909-927.

\bibitem {favaro2} \textrm{V. V. F\'avaro, A. M. Jatob\'a}, \textit{Holomorphy types and the Fourier-Borel transform between spaces of entire functions of a given type and order defined on Banach spaces},  Math. Scand. \textbf{110} (2012), 111-139.

\bibitem {FMP}V. V. F\'{a}varo, M. C. Matos, D. Pellegrino,
\textit{On Lorentz nuclear homogeneous polynomials between Banach spaces},
Portugal. Math. \textbf{67} (2010), 413-435.

\bibitem {FM} \textrm{V. V. F\'avaro, J. Mujica}, \textit{Hypercyclic convolution operators on spaces of entire functions}, J. Operator Theory {\bf 76} (2016), 141--158. 

\bibitem {FP} \textrm{V. V. F\'avaro, D. Pellegrino}, \textit{Duality results in Banach and quasi-Banach spaces of homogeneous polynomials and applications}, arXiv:1503.01079 [math.FA].

\bibitem{GS} R. M. Gethner, J. H. Shapiro, {\it Universal vector for operators on spaces of holomorphic functions}, Proc. Amer. Math. Soc. {\bf 100} (1987), 281--288.

\bibitem {godefroy}G. Godefroy and J. H. Shapiro, \textit{Operators with
dense, invariant, cyclic vector manifolds}, J. Funct. Anal. \textbf{98}
(1991), 229--269.

\bibitem {goswinBAMS}K. G. Grosse-Erdmann, \textit{Universal families and
hypercyclic operators}, Bull. Amer. Math. Soc. \textbf{36} (1999), 345--381.

\bibitem{grosse_peris} K. G. Grosse-Erdmann, A. Peris Manguillot, \textit{Linear Chaos},
Springer, Berlin, 2011.

\bibitem {gupta}\textrm{C. Gupta}, \textit{Convolution Operators and
Holomorphic Mappings on a Banach Space}. S\'{e}minaire d'Analyse
Moderne, 2. Universit\'{e} de Sherbrooke. Sherbrooke, $1969$.

\bibitem {G}C. P. Gupta, \textit{On the Malgrange Theorem for nuclearly
entire functions of bounded type on a Banach space}, Indag. Math. \textbf{32}
(1970), 356-358.

\bibitem {horvath}\textrm{J. Horv\'{a}th},
\textit{Topological Vector Spaces and Distribuitions}.
Addison-Wesley, Reading, MA, $1966$.

\bibitem{kitai} C. Kitai, \textrm{ Invariant closed sets for linear operators}, Dissertation, University of Toronto, 1982.

\bibitem {maclane}G. R. MacLane, \textit{Sequences of derivatives and normal
families}, J. Anal. Math. \textbf{2} (1952), 72--87.

\bibitem {malgrange}\textrm{B. Malgrange}, \textit{Existence et approximation
des \'{e}quations aux d\'{e}riv\'{e}es partielles et des equations
des convolutions}. Ann. Inst. Fourier (Grenoble){\bf 6} (1955/56),
271-355.

\bibitem {martineau}\textrm{A. Martineau}, \textit{\'{E}quations
diff\'{e}rentielles d'ordre infini}. Bull. Soc. Math. France {\bf
95} (1967), 109-154.

\bibitem {Matos-F}M. C. Matos, \textit{Sur le th\'{e}or\`{e}me d'approximatin
et d'existence de Malgrange-Gupta}, C. R. Acad. Sci. Paris \textbf{ 271}
(1970), 1258-1259.

\bibitem {Matos-Z}\textrm{M. C. Matos}, \textit{On Malgrange Theorem for nuclear holomorphic
functions in open balls of a Banach space}. Math. Z. \textbf{171}
(1978), 171-290.

\bibitem {Matos-Z2}M. C. Matos, \textit{Correction to \textquotedblleft On
Malgrange Theorem for nuclear holomorphic functions in open balls of a Banach
space\textquotedblright}, Math. Z. (1980), 289-290.

\bibitem {matos2}\textrm{M. C. Matos}, \textit{On the Fourier-Borel
transformation and spaces of entire functions in a normed space},
in: Functional Analysis, Holomorphy and Approximation Theory II
(G. I. Zapata, ed.), North-Holland Math. Studies, North-Holland,
Amsterdam, $1984$, pp. 139-170.

\bibitem {matos3}\textrm{M. C. Matos}, \textit{On convolution operators in
spaces of entire functions of a given type and order}, in: Complex
Analysis, Functional Analysis and Approximation Theory (J. Mujica,
ed.), North-Holland Math. Studies {\bf 125}, North-Holland,
Amsterdam, $1986$, pp. 129-171.

\bibitem {Matos-AB}\textrm{M. C. Matos}, \textit{Absolutely summing holomorphic mappings}. An. Acad. Brasil. Ci\^enc. \textbf{68}
(1996), 1-13.

\bibitem {matos4}\textrm{M.C. Matos}, \textit{Nonlinear absolutely summing
mappings}. Math. Nachr. \textbf{258} (2003), 71-89.

\bibitem{matosjmaa} M. C. Matos, {\it Mappings between Banach spaces that send mixed summable sequences into absolutely summable sequences}, J. Math. Anal. Appl. {\bf 297} (2004), 833-851.

\bibitem {Matos-livro}\textrm{M.~C.~Matos}, \textit{Absolutely Summing Mappings, Nuclear Mappings
and Convolution Equations}. IMECC-UNICAMP, $2007$. Web:
http:\verb|//|www.ime.unicamp.br\verb|/|
rel\verb|_|pesq\verb|/|2007\verb|/|rp03-07.html

\bibitem {MN}\textrm{M. C. Matos, L. Nachbin}, \textit{Entire functions on locally convex
spaces and convolution operators}. Comp. Math. \textbf{44} (1981),
145-181.

\bibitem {Matos-Pellegrino}M. C. Matos and D. Pellegrino, \textit{Lorentz
summing mappings}, Math. Nachr. \textbf{283} (2010), 1409-1427.

\bibitem{mujica} J. Mujica, {\it Complex analysis in Banach spaces}, Dover Publications, 2010.

\bibitem{MPS} S. Muro, D. Pinasco, M. Savransky, \textit{Strongly mixing convolution operators on Fr\'echet spaces of holomorphic functions},
Integr. Equ. Oper. Theory {\bf 80} (2014), 453-468.

\bibitem {nachbin2}\textrm{L. Nachbin}, \textit{Topology on Spaces of
Holomorphic Mappings}. Springer, New York, $1969$.

\bibitem {Nach-B}L. Nachbin, \textit{Recent developments in infinite
dimensional holomorphy}, Bull. Amer. Math. Soc. \textbf{79} (1973), 625-639.

\bibitem {peterssonjmaa}H. Petersson, \textit{Hypercyclic subspaces for
Fr\'echet space operators}, J. Math. Anal. Appl. \textbf{319} (2006), 764--782.
%
%

\end{thebibliography}
\end{document}